\newcommand{\myforall}{\text{ for all }}
\newcommand{\mysuchthat}{\text{ such that }}
\newcommand{\myand}{\text{ and }}
\newcommand{\seb}{\{\,}
\newcommand{\sen}{\,\}}
\newcommand{\getsby}[1]{\xleftarrow{#1}}
\newcommand{\Per}{{\rm Per}}
\newcommand{\funcdecomp}[2]{{#1\/}^{\/#2}}
\newcommand{\tesgh}{edge-surjective graph homomorphism}
\newcommand{\pdirectional}{\raise0.05em\hbox{$+$}directional}
\newcommand{\Z}{\mathbb{Z}}
\newcommand{\Nonne}{\mathbb{N}}
\newcommand{\Posint}{\Nonne^+}
\newcommand{\Real}{\mathbb{R}}
\newcommand{\bi}{\in \Z}
\newcommand{\beposint}{\in \Posint}
\newcommand{\bpi}{\beposint} 
\newcommand{\bni}{\in \Nonne} 
\newcommand{\diam}{{\rm diam}}
\newcommand{\mesh}{{\rm mesh}}
\newcommand{\dist}{{\rm dist}}
\newcommand{\pstrz}[2]{({#1}_0,{#1}_1,\dotsc,{#1}_{#2})}
\newcommand{\pstro}[2]{({#1}_1,{#1}_2,\dotsc,{#1}_{#2})}
\newcommand{\pstrzinf}[1]{(#1_0,#1_1,#1_2,\dotsc)}
\newcommand{\Decomp}{\mathscr{D}}
\newcommand{\Acal}{\mathcal{A}}
\newcommand{\Bcal}{\mathcal{B}}
\newcommand{\Ccal}{\mathcal{C}} 
\newcommand{\Dcal}{\mathcal{D}} 
\newcommand{\Gcal}{\mathcal{G}}
\newcommand{\Ical}{\mathcal{I}}
\newcommand{\Mcal}{\mathcal{M}}
\newcommand{\Pcal}{\mathcal{P}}
\newcommand{\Ucal}{\mathcal{U}}
\newcommand{\Vcal}{\mathcal{V}}
\newcommand{\sC}{\mathscr{C}}
\newcommand{\sM}{\mathscr{M}}
\newcommand{\sO}{\mathscr{O}}
\newcommand{\sP}{\mathscr{P}}
\newcommand{\kuu}{\emptyset}
\newcommand{\nekuu}{\neq \kuu}
\newcommand{\fai}{\varphi}
\newcommand{\bN}{{\bf N}}
\newcommand{\barm}{\bar{m}}
\newcommand{\barM}{\bar{M}}
\newcommand{\enumb}{\begin{enumerate}}
\newcommand{\enumn}{\end{enumerate}}
\newcommand{\itemb}{\begin{itemize}}
\newcommand{\itemn}{\end{itemize}}
\newtheorem{thm}{Theorem}[section]
\newtheorem{lem}[thm]{Lemma}
\newtheorem{prop}[thm]{Proposition}
\newtheorem{cor}[thm]{Corollary}
\theoremstyle{definition}
\newtheorem{defn}[thm]{Definition}
\newtheorem{example}[thm]{Example}
\theoremstyle{remark}
\newtheorem{nota}[thm]{Notation}
\newtheorem{rem}[thm]{Remark}
\numberwithin{equation}{section}
\newcommand{\usualnotationss}{(N-1) -- (N-5) }
\newcommand{\Eq}{{\rm Eq}}
\newcommand{\Icalinf}{\Ical_{\infty}}
\newcommand{\Deltainf}{\Delta_{\infty}}
\newcommand{\Ccalinf}{\Ccal_{\infty}}
\newcommand{\ov}{\overline}
\newcommand{\ovCcalinf}{\ov{\Ccal}_{\infty}}
\newcommand{\tila}{\tilde{a}}
\newcommand{\tilb}{\tilde{b}}
\newcommand{\tilc}{\tilde{c}}
\newcommand{\Pmax}{P_{\rm max}}
\newcommand{\Hull}{{\rm Hull}}
\newcommand{\abs}[1]{\lvert#1\rvert}
\begin{document}

\title[Graph Covers and Ergodicity]
{Graph Covers and Ergodicity for 0-dimensional Systems}

\author{TAKASHI SHIMOMURA}

\address{Nagoya University of Economics, Uchikubo 61-1, Inuyama 484-8504, Japan}
\curraddr{}
\email{tkshimo@nagoya-ku.ac.jp}
\thanks{}

\subjclass[2010]{Primary 37B05, 54H20.}

\keywords{uniquely ergodic, zero-dimensional, Cantor system, dynamical system, graph}

\date{\today}

\dedicatory{}

\commby{}

\begin{abstract}
Bratteli--Vershik systems have been widely studied.
In the context of general 0-dimensional systems, Bratteli--Vershik systems are homeomorphisms that have Kakutani--Rohlin refinements.
Bratteli diagram has a strong power to analyze such systems.
Besides this approach, general graph covers can be used to represent any 0-dimensional system.
Indeed, all 0-dimensional systems can be described as a certain kind of sequences of graph covers that may not be brought about by the Kakutani--Rohlin partitions.
In this paper, we follow the context of general graph covers to
 analyze the relations between ergodic measures and circuits of graph covers.
First, we formalize the condition for a sequence of graph covers to represent minimal Cantor systems.
In constructing invariant measures, 
 we deal with general compact metrizable 0-dimensional systems.
In the context of Bratteli diagrams with finite rank, it has previously been mentioned that all ergodic measures should be limits of some combinations of towers of Kakutani--Rohlin refinements.
We demonstrate this for general 0-dimensional case, and develop a theorem that expresses the coincidence of the time average and the space average for ergodic measures.
Additionally, we formulate a theorem that signifies the old relation between uniform convergence and
 unique ergodicity in the context of graph circuits for general 0-dimensional systems.
Unlike previous studies, in our case of general graph covers,
 there arise a possibility of linear dependence of circuits.
We give a condition for a full circuit system to be linearly independent.
Previous researches also showed that
 the bounded combinatorics imply unique ergodicity.
We present a lemma that enables us to consider unbounded ranks of winding matrices.
\end{abstract}

\maketitle
%
%
%
%
\section{Introduction}\label{sec:introduction}
Let $X$ be a compact metrizable 0-dimensional space,
 and $f : X \to X$ be a continuous surjective map.
In this paper, we call $(X,f)$ a 0-dimensional system.
If $X$ is homeomorphic to the Cantor set, $(X,f)$ is called
 a Cantor system.
In \cite[\S 2]{GM}, Gambaudo and Martens defined special sequences of graph covers for Cantor minimal systems.
In \cite[Theorem 2.5]{GM}, they stated a structure theorem whereby every minimal Cantor system can be described as the inverse limit of such a sequence.
In the usage of this theorem, they combinatorially constructed 
 the set of all invariant Borel measures \cite[Proposition 3.2]{GM}.
Further, in \S 3 of the above paper, in the study of constructing a Cantor system that has the space of ergodic measures that is homeomorphic to $S^n$,
they showed how their graph covers are related to
ergodic measures.
In particular, they stated that some  combinatorics bring about finite ergodicity,
 and that bounded combinatorics bring about unique ergodicity~\cite[Proposition 3.3]{GM}.
After the first version of this manuscript, the referee notified the author
 of the widespread study of Bratteli--Vershik systems.
Herman, Putnam and Skau \cite{HPS}
 developed a method of representing any 0-dimensional systems that are homeomorphisms and
 have unique minimal sets, 
 using essentially simple ordered Bratteli diagrams.
They used a special cover, called the Kakutani--Rohlin partition.
From this, numerous studies of
 Bratteli--Vershik systems began.
For example, the Bratteli--Vershik system was extended by
 Medynets \cite{Medynets} to 0-dimensional systems that are homeomorphisms 
 and have no periodic points.
In \cite{BKMS}, Bezuglyi, Kwiatkowski, Medynets and Solomyak
 presented a number of results related to those given in this paper, as well as others in the context of
 Bratteli diagrams of finite rank.
As for finite ergodicity, \cite[Theorem 3.3 (3)]{BKMS} also refers.
And, as for the unique ergodicity, \cite[Theorem 4.11]{BKMS} gives a necessary and sufficient condition in the context of Bratteli diagrams of finite rank.
Following Akin, Glasner and Weiss \cite{AGW},
 we used general graph covers to derive \cite[Theorem \ref{thm:structure}]{Shimomura4},
 in which we showed that
 all 0-dimensional systems are naturally described
 as an inverse limit of an inverse system of directed graphs.
Unlike the use of Bratteli diagrams, our representation of general 0-dimensional systems
 uses a sequence of finite directed graphs and their covers, and 
 naturally represents every 0-dimensional system.
In particular, the natural partition induced from each directed graph is 
 not neccessarily a Kakutani--Rohlin partition.
In this paper, we  follow the results of Gambaudo and Martens \cite{GM}
 up to \S 3 in the context of our general graph covers.
And, also refer the linkage with the results of Bezuglyi, Kwiatkowski, Medynets and Solomyak \cite{BKMS} as possible as the author can.
To extend the work of Gambaudo and Martens \cite{GM} in such a way, we first
 describe the condition for a general Cantor system to be minimal in
 Theorem \ref{thm:minimal-main}.
Our construction of invariant measures applies to
 general 0-dimensional systems, and not only minimal systems
 (see Proposition \ref{prop:description-of-measures-by-covers}).
We also describe
 how the system of circuits of each graph is related to ergodic measures.
One of our main results is 
 Theorem \ref{thm:an-ergodic-measure-is-from-circuits},
 which describes a traditional property
 of ergodicity that ``the time average and the space average coincide.''
Neverthless, this is refered in \cite[Theorem 3.3]{BKMS} in the context of Bratteli diagram of finite rank.
It is also known that unique ergodicity is related to uniform convergence
 (see, for example, Theorem 6.19 of \cite{Walters}).
A similar result is seen in Corollary \ref{cor:condition-unique-ergodicity}, the 
complete form of that, in the context of Bratteli diagram of finite rank, is seen in \cite[Theorem 4.11]{BKMS}.
We also extend \cite[Proposition 3.3 (b)]{GM} to get
 Theorem \ref{thm:limited-combinatorics-unique-ergodicity}
 for certain cases of unbounded combinatorics.
The elementary Lemma \ref{lem:positive-matrix} enables us to manage
a sequence of covers with unbounded ranks.
Unlike the graphs considered by Gambaudo and Martens \cite{GM},
 our construction gives rise to the case of a linearly dependent system of circuits.
Section \ref{sec:linear-dependence} is devoted to the study of
 the linear independence of circuits
 (see Theorems \ref{thm:edge-linear} or \ref{thm:linear-independence}).
%
%
%

\vspace{5mm}
\noindent {\sc Acknowledgments.}
The author wishes to express his gratitude to the referee for his kind advices that
 enabled the author to keep in touch with some of works concerning Bratteli--Vershik systems.
%
%
%
%
%
%
%
\section{Preliminaries}\label{sec:preliminaries}
%
%
Let $\Z$ denote the set of all integers, $\Nonne$ be the set of all non-negative integers, and $\Posint$ be the set of all positive integers.
Let $(X,d)$ be a compact metric space,
and $f : X \to X$ be a continuous map with $f(X)=X$.
A pair $(X,f)$ is called a {\it topological dynamical system}.
We mainly consider the case in which $X$ is 0-dimensional.
When $X$ is 0-dimensional, we call $(X,f)$ a {\it 0-dimensional system}.
Let $C$ be the Cantor set.
Topological spaces that are homeomorphic to $C$ are characterized as compact 0-dimensional perfect metrizable spaces.
A topological dynamical system $(X,f)$ is said to be a {\it Cantor system}\/ if $X$ is homeomorphic to $C$.
For $a > b ~(a,b \bi)$, we denote by $[a,b]$ the segment $\seb a, a+1,\dotsc,b \sen$.
A pair $(V,E)$ consisting of a finite set $V$ endowed with the discrete topology and a relation $E \subseteq V \times V$ on $V$ can be considered as a directed graph with vertices $V$ and an edge from $u$ to $v$ when $(u,v) \in E$.
For every $e = (u,v)  \in E$, $u$ is called the {\it initial} vertex, and $v$ is the {\it terminal} vertex. 
The projection from $e = (u,v) \in E$ to the initial (resp. terminal) vertex $u$ (resp. $v$) is denoted by $i$ (resp. $t$).
We call $E$ a {\it surjective relation}\/ when both $i$ and $t$ are surjective.
Thus, for every vertex $v \in V$, there exist edges $(u_1,v),(v,u_2) \in E$.
%
%
In this paper, we call $(V,E)$ a {\it graph}\/ only if $V$ is finite and $E$ is a surjective relation.
%
%
\if0 
When $V$ is not necessarily finite or $E$ is not necessarily surjective, we state this explicitly.
\fi
%
%
\begin{defn}
Let $G = (V,E)$ be a graph and $l \bpi$.
A sequence $\pstrz{v}{l}$ of elements of $V$ is a {\it walk} of length $l$ if $(v_i,v_{i+1}) \in E$ for all $0 \le i < l$.
The vertex $v_0$ is called the {\it initial} vertex, and is denoted as $i(p)$;
$v_ l$ is called the {\it terminal} vertex, denoted by $t(p)$.
A walk $\pstrz{v}{l}$ is a {\it path} if $i \ne j$ implies $v_i \neq v_j$,
and $\pstrz{v}{l}$ is a {\it cycle} if $v_0 = v_l$.
A cycle $\pstrz{v}{l}$ is a {\it circuit} if $v_i = v_j$ $(0 \le i < j \le l)$ only when $i = 0$ and $j = l$.
If $p$ is a path of length $l$, then it is also considered to be a graph with $l+1$ vertices and $l$ edges.
If $c$ is a circuit of length $l$, then it is also considered to be a {\it circuit graph} with $l$ vertices and $l$ edges, and $l$ is called the {\it period} of the circuit, denoted as $\Per(c)$.
\end{defn}
%
%
%
\begin{nota}
Let $w = \pstrz{v}{l}$ and $w' = \pstrz{v'}{l'}$ be two walks such that the terminal vertex of $w$ is the initial vertex of $w'$.
Then, we denote the connected walk by $w+w'$.
Namely, $w+w' = (v_ 0,v_ 1, \dotsc,v_ l,{v'}_1,{v'}_2,\dotsc,{v'}_{l'})$.
\end{nota}
\begin{nota}
For a graph $G$, we denote the set of circuit graphs contained in $G$ as:
\[ \sC(G) : = \seb c ~|~ c \text{ is a circuit graph of } G \sen.\]
\end{nota}

\begin{nota}
Let $l \bpi$ and $v \in V$.
We use the following notation:
\[W_v(G,l) := \seb w = (v_0 = v,v_1,v_2,\dotsc,v_l) ~|~ w \text{ is a walk of } G \text{ (of length } l) \sen,\]
\[W(G,l) := \bigcup_ {v \in V}W_v(G,l),\quad W_v(G) := \bigcup_ {l \bpi} W_v(G,l), \text{ and}\]
\[W(G) := \bigcup_ {v \in V, l \bpi} W_v(G,l).\]
\end{nota}
\begin{nota}
Let $l \bpi$.
For $w = (v_0,v_1,v_2,\dotsc,v_l) \in W(G,l)$, we denote:
\[ V(w) := \seb v_0,v_1,v_2,\dots, v_l \sen \myand E(w) := \seb (v_i,v_{i+1}) ~|~ 0 \le i < l \sen.\]
For a circuit graph $c \in \sC(G)$, we similarly denote:
\[ V(c) := \text{ the set of all vertices of } c \myand E(c) := \text{ the set of all edges of } c. \]
\end{nota}

%
%
\if0  
\begin{nota}
For a walk $w = \pstrz{v}{l} \in W(G,l)$ and a segment $[i,j]$ with $0 \le i < j \le l$, we define the restriction as $w|[i,j] := (w_i,w_{i+1},\dotsc,w_j) \in W(G,j-i)$.
\end{nota}
\fi
%
%
%
%
%
%
%
%
\begin{nota}\label{nota:decomp}
Let $X$ be a compact metrizable 0-dimensional space.
The partition of $X$ by non-empty closed and open subsets is called a {\it decomposition}.
The set of all decompositions of $X$ is denoted by $\Decomp(X)$.
Each $\Ucal \in \Decomp(X)$ is endowed with the discrete topology.
\end{nota}
For $\Ucal \in \Decomp(X)$ such that $\Ucal \ne \seb X \sen$, we define:
\[ \dist(\Ucal) := \min \seb d(x,y) ~|~x \in u \in \Ucal,\ y \in v \in \Ucal, \myand u \cap v = \kuu \sen.\]
When $\Ucal = \seb X \sen$, we define $\dist(\Ucal) = \infty$.
Note that for $\Ucal \in \Decomp(X)$, $\dist(\Ucal) = \sup \seb \epsilon\ (\infty \ge \epsilon > 0)~|~d(x,y) < \epsilon \text{ implies that } x \myand y \text{ are in the same element of } \Ucal \sen$.

\begin{nota}\label{nota:vee}
Let $\Ucal,\Vcal \in \Decomp(X)$.
Then, we define $\Ucal \vee \Vcal := \seb U \cap V ~|~ U \in \Ucal, V \in \Vcal, \myand U \cap V \nekuu \sen \in \Decomp(X)$.
We also define $f^{-1}\Ucal := \seb f^{-1}(U) ~|~ U \in \Ucal \sen \in \Decomp(X)$.
\end{nota}
We denote a carrier map as $\kappa_{\Ucal} : X \to \Ucal$, i.e., $x \in \kappa_{\Ucal}(x)$ for all $x \in X$.
Furthermore, we get $U = {\kappa_{\Ucal}}^{-1}(U)$ for all $U \in \Ucal$.
For an arbitrary covering $\Acal$ of $X$, we use the following notation:
\[\mesh(\Acal) := \sup \{\diam(A) ~|~ A \in \Acal \}.\]
%
%
\begin{nota}
If $f : X \to X$ is a surjective mapping and $\Ucal$ is a finite covering of non-empty sets, then a surjective relation $\funcdecomp{f}{\Ucal}$ on $\Ucal$ is defined as:
\[ \funcdecomp{f}{\Ucal} := \seb (u,v) ~|~ f(u) \cap v \nekuu \sen. \]
If the surjective map $f : X \to X$ is regarded as a subset $f \subseteq X \times X$ and $\Ucal$ is a decomposition of $X$, then $\funcdecomp{f}{\Ucal} = (\kappa_{\Ucal} \times \kappa_{\Ucal})(f)$.
In general, $(\Ucal,\funcdecomp{f}{\Ucal})$ is a graph, because $f$ is a surjective relation.
\end{nota}
\begin{defn}\label{defn:cont-edge-surjection}
Let $(X,f)$ be a 0-dimensional system and $(V,E)$ be a graph.
In this paper, a continuous map $\psi : X \to V$ is said to be a {\it continuous}\/ homomorphism from $(X,f)$ to $(V,E)$ if $(\psi \times \psi)(f) \subseteq E$ when $f$ is regarded as a subset $f \subseteq X \times X$.
If a continuous homomorphism $\psi : (X,f) \to (V,E)$ satisfies $(\psi \times \psi)(f) = E$, then we say that $\psi$ is a {\it continuous edge-surjection,}\/ and we write $\psi(X,f)=(V,E)$.
\end{defn}
%
%
Let $G=(V,E)$ and $G'=(V',E')$ be graphs.
A mapping $\fai : V \to V'$ is said to be a graph homomorphism if $(u,v) \in E$ implies $(\fai(u),\fai(v)) \in E'$.
In this case, we write $\fai : G \to G'$ or $\fai : (V,E) \to (V',E')$.
%
%
A map $\fai : V \to V'$ is a graph homomorphism $\fai : (V,E) \to (V',E')$ if and only if $(\fai \times \fai)(E) \subseteq E'$.
We say that $\fai$ is {\it edge-surjective}\/ if $(\fai \times \fai)(E) = E'$.
%
%
%
%
Suppose that a graph homomorphism $\fai : (V_1,E_1) \to (V_2,E_2)$ satisfies the following condition:
\[(u,v),(u,v') \in E_1 \text{ implies that } \fai(v) = \fai(v').\]
Then, in this paper, $\fai$ is said to be {\it \pdirectional}.
If $E_2$ is a map, then every graph homomorphism $\fai$ is \pdirectional.
%
%
\begin{defn}\label{defn:cover}
A graph homomorphism $\fai : G_1 \to G_2$ is called a {\it cover}\/ if it is a \pdirectional\ \tesgh.
\end{defn}
Let $G_1 \getsby{\fai_1} G_2 \getsby{\fai_2} \dotsb$ be a sequence of graph homomorphisms.
For $m > n$, let $\fai_{m,n} := \fai_{n} \circ \fai_{n+1} \circ \dotsb \circ \fai_{m-1}$.
Then, $\fai_{m,n}$ is a graph homomorphism.
If every ${\fai_i}$ $(i \bpi)$ is edge-surjective, then every $\fai_{m,n}$ is edge-surjective,
and if every ${\fai_i}$ $(i \bpi)$ is a cover, then every $\fai_{m,n}$ is a cover.
%
%
To express a 0-dimensional system as the inverse limit of a sequence of graph covers, we cite some lemmas and a theorem from \cite{Shimomura4}.
If $\Gcal$ is a sequence $G_1 \getsby{\fai_1} G_2 \getsby{\fai_2} \dotsb$ of covers, then we always attach the singleton graph $G_0 = (\{0\},\{(0,0)\})$ to the head.
Thus, $\Gcal$ is the sequence $G_0 \getsby{\fai_0} G_1 \getsby{\fai_1} G_2 \getsby{\fai_2} \dotsb$.
%
%
Let us write $G_i = (V_i,E_i)$ for $i \bni$.
Define
\[V_{\Gcal} := \seb \pstrzinf{x} \in \prod_{i = 0}^{\infty}V_i~|~x_i = \fai_i(x_{i+1}) \text{ for all } i \bni \sen \text{ and}\]
\[E_{\Gcal} := \seb (x,y) \in V_{\Gcal} \times V_{\Gcal}~|~(x_i,y_i) \in E_i \text{ for all } i \bni\sen,\]
each equipped with the product topology.
Let $\Gcal$ be a sequence $G_0 \getsby{\fai_0} G_1 \getsby{\fai_1} G_2 \getsby{\fai_2} \dotsb$ of covers.
Then, $V_{\Gcal}$ is a compact, metrizable 0-dimensional space, and the relation $E_{\Gcal}$ determines a continuous  mapping from $V_{\Gcal}$ onto itself (Lemma 3.5 of \cite{Shimomura4}).
%
%
\begin{defn}
Let $\Gcal$ be a sequence $G_0 \getsby{\fai_0} G_1 \getsby{\fai_1} G_2 \getsby{\fai_2} \dotsb$ of covers.
The 0-dimensional system $(V_{\Gcal},E_{\Gcal})$ is called the inverse limit of $\Gcal$, denoted by $G_{\infty}$.

\end{defn}
%
%
For each $i \bni$, the projection from $V_{\Gcal}$ to $V_i$ is denoted by $\fai_{\infty,i}$.
We define:
\[\Ucal_{\Gcal,i} := \seb \fai_{\infty,i}^{-1}(u)~|~u \in V_i \sen,\]
which we can identify with $V_i$ itself.
%
%
Let $i \bni$.
Then, we get $\Ucal_{\Gcal,i} \in \Decomp(V_{\Gcal})$.
Furthermore, by the natural bijection $u \mapsto \fai_{\infty,i}^{-1}(u)~ (u \in V_i)$,
$(\Ucal_{\Gcal,i},\funcdecomp{E_{\Gcal}}{\Ucal_{\Gcal,i}})$ is graph isomorphic to $G_i$.
Fixing a metric on $V_{\Gcal}$,
we get $\mesh(\Ucal_i) \to 0$ as $i \to \infty$.
%
%
%
%
\begin{thm}[{\cite[Theorem 3.9]{Shimomura4}}]\label{thm:structure}
A topological dynamical system is 0-dimensional if and only if it is topologically conjugate to $G_{\infty}$ for some sequence of covers $G_0 \getsby{\fai_0} G_1 \getsby{\fai_1} G_2 \getsby{\fai_2} \dotsb$.
\end{thm}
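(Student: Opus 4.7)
The plan is to handle the two directions of the equivalence separately. The ``if'' direction is essentially already contained in the discussion preceding the theorem: given any sequence of covers $\Gcal$, the space $V_{\Gcal}$ is a closed subset of the product $\prod_{i \ge 0} V_i$ of finite discrete spaces, hence compact, metrizable, and 0-dimensional, and $E_{\Gcal}$ has already been noted to determine a continuous self-surjection by \cite[Lemma~3.5]{Shimomura4}. Therefore any system conjugate to $G_{\infty}$ is 0-dimensional.

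For the converse, suppose $(X,f)$ is a 0-dimensional system and fix a compatible metric. The plan is to construct inductively a sequence $\Ucal_0 := \seb X \sen, \Ucal_1, \Ucal_2, \dotsc \in \Decomp(X)$ satisfying: (a)~$\Ucal_{i+1}$ refines $\Ucal_i \vee f^{-1}(\Ucal_i)$ for every $i \bni$, and (b)~$\mesh(\Ucal_i) \to 0$. Existence is routine: $X$ admits finite clopen partitions of arbitrarily small mesh, and $\Ucal_i \vee f^{-1}(\Ucal_i)$ is itself such a partition, so one may take any clopen refinement of it of mesh less than $1/(i+1)$.

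Set $G_i := (\Ucal_i, \funcdecomp{f}{\Ucal_i})$ and let $\fai_i : \Ucal_{i+1} \to \Ucal_i$ send each $u \in \Ucal_{i+1}$ to the unique element of $\Ucal_i$ containing it. I would then verify that each $\fai_i$ is a cover. It is a graph homomorphism, because $(u,v) \in \funcdecomp{f}{\Ucal_{i+1}}$ implies $f(\fai_i(u)) \supseteq f(u)$ meets $v \subseteq \fai_i(v)$. It is edge-surjective, because any $(u',v') \in \funcdecomp{f}{\Ucal_i}$ is witnessed by some $x \in u'$ with $f(x) \in v'$, and the elements of $\Ucal_{i+1}$ containing $x$ and $f(x)$ yield an edge mapping onto $(u',v')$. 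The $+$directional property is precisely where condition~(a) enters: if $(u,v),(u,v') \in \funcdecomp{f}{\Ucal_{i+1}}$, then $f(u)$ meets both $v$ and $v'$; by (a), $f(u)$ lies in a single element of $\Ucal_i$, so both $\fai_i(v)$ and $\fai_i(v')$ equal that element.

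It remains to identify $(X,f)$ with $G_{\infty}$. Define $\pi : X \to V_{\Gcal}$ by $\pi(x) := (\kappa_{\Ucal_i}(x))_{i \ge 0}$; since $\kappa_{\Ucal_{i+1}}(x) \subseteq \kappa_{\Ucal_i}(x)$, the compatibility $\fai_i(\kappa_{\Ucal_{i+1}}(x)) = \kappa_{\Ucal_i}(x)$ holds, so $\pi$ is well-defined and continuous (each coordinate is locally constant on a clopen partition). Condition~(b) forces $\pi$ to be injective, while surjectivity follows from the finite intersection property applied to the nested nonempty clopens $u_0 \supseteq u_1 \supseteq \dotsb$ determined by any $(u_i) \in V_{\Gcal}$. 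Finally, $f(\kappa_{\Ucal_i}(x)) \ni f(x) \in \kappa_{\Ucal_i}(f(x))$ shows $(\pi(x), \pi(f(x))) \in E_{\Gcal}$, which, since $E_{\Gcal}$ is functional, yields the desired conjugacy. The main technical point is securing the $+$directional property, and condition~(a) is built into the inductive construction precisely to handle it; everything else is routine bookkeeping.
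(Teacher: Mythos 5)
Your proof is correct, and it follows the standard construction that the cited reference \cite[Theorem 3.9]{Shimomura4} uses: the present paper only cites the result without reproving it, and your argument --- building clopen partitions with $\Ucal_{i+1}$ refining $\Ucal_i \vee f^{-1}\Ucal_i$ so that the induced graph homomorphisms are \pdirectional\ edge-surjections, then identifying $(X,f)$ with the inverse limit via $x \mapsto (\kappa_{\Ucal_i}(x))_i$ --- is exactly the intended one. The key point, isolating the refinement of $f^{-1}\Ucal_i$ as what secures the \pdirectional\ property (and hence the functionality of $E_{\Gcal}$), is correctly identified and handled.
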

Instead of a countable sequence of covers, we may use an at most countable inverse system (see the discussion after Theorem 3.9 in \cite{Shimomura4}).
In this paper, we use a sequence of covers.
Let $\Gcal$ be a sequence $G_0 \getsby{\fai_0} G_1 \getsby{\fai_1} G_2 \getsby{\fai_2} \cdots$ of covers.
We define some notation and elementary properties of both $\Gcal$ and the inverse limit $G_{\infty}$ for later sections.
%
%
\itemb
\item[(N-1)]  We write $G_{\infty} = (X,f)$,
\item[(N-2)]  we fix a metric on $X$,
\item[(N-3)]  for each $i \bni$, we write $G_i = (V_i,E_i)$,
\item[(N-4)]  for each $i \bni$, we define $U(v) := \fai^{-1}_{\infty,i}(v)$ for $v \in V_i$ and $\Ucal_{\Gcal,i} := \seb U(v) ~|~v \in V_i\sen \in \Decomp(X)$, and 
\item[(N-5)]  for each $i \bni$, there exists a bijective map $V_i \ni v \leftrightarrow U(v) \in \Ucal_{\Gcal,i}$.
From this bijection, we obtain a graph isomorphism $G_i \cong (\Ucal_{\Gcal,i},\funcdecomp{f}{\Ucal_{\Gcal,i}})$.
\itemn
%
%
It follows that $\mesh(\Ucal_{\Gcal,i}) \to 0$ as $i \to 0$.
%
%

\begin{nota}
Let $l \bpi$ and $x \in X$.
We denote $w(x,l) = (x, f(x), f^2(x),\dotsc,f^l(x))$.
We also denote:
\[W(G_{\infty},l) := \seb w(x,l)~|~ x \in X \sen,\quad W_x(G_{\infty}) := \seb w(x,l)~|~l \bpi \sen, \text{ and}\]
\[W(G_{\infty}) := \seb w(x,l)~|~ x \in X, l \bpi \sen.\]
\end{nota}
For every $\infty \ge m > n$, a map $\fai_{m,n} : W(G_m) \to W(G_n)$ is naturally defined.\begin{nota}
Let $\infty \ge m > n$, $l \bpi$, and $w = \pstrz{v}{l} \in W(G_m,l)$.
We use the following abbreviation:
\[V_n(w) := V(\fai_{m,n}(w)) = \fai_{m,n}(V(w)) \subseteq V_n \quad\text{ and }\quad E_n(w) := E(\fai_{m,n}(w)) = \fai_{m,n}(E(w)) \subseteq E_n.\]
In the same way, for a circuit graph $c \in \sC(G_m)$, we abbreviate as:
\[V_n(c) := \fai_{m,n}(V(c)) \subseteq V_n \quad\text{ and }\quad E_n(c) := \fai_{m,n}(E(c)) \subseteq E_n.\]
\end{nota}

\begin{defn}
Let $m > n\ (m,n \bni)$ and $v \in V_m$.
We say that a walk $w \in W(G_n)$ is {\it $(m,v)$-projective} if there exists a walk $w' \in W(G_m)$ starting from $v$ such that $w = \fai_{m,n}(w')$.
A walk $w \in W(G_n)$ is said to be {\it $m$-projective} if there exists a vertex $v \in V_m$ such that $w$ is $(m,v)$-projective.
Let $l \bpi$ and $x \in X$.
A walk $w  \in W(G_n,l)$ is {\it $(\infty,x)$-projective} if $w =\fai_{\infty,n}(w(x,l))$.
A walk $w \in W(G_n,l)$ is {\it $\infty$-projective} if $w$ is $(\infty,x)$-projective for some $x \in X$.
\end{defn}
Clearly, for $\infty \ge m' > m > n$, an $m'$-projective walk in $W(G_n)$ is $m$-projective.
A walk is $\infty$-projective if and only if a real orbit is projected onto it.
\vspace{0.7cm}
\begin{lem}\label{lem:only(m,v)-projective->real}
Let $m > n\ (m,n \bni)$, $l \bpi$, $v \in V_m$, and $w \in W(G_n,l)$.
Suppose that $\seb w \sen = \fai_{m,n}(W_v(G_m,l))$, i.e., $w$ is the only $(m,v)$-projective walk of length $l$.
Then, $w$ is $\infty$-projective.
\end{lem}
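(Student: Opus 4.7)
The plan is to construct a point $x \in X$ whose orbit $w(x,l)$ projects to $w$ under $\fai_{\infty,n}$. Since $X$ is the inverse limit of the graphs $G_k$, it will suffice to produce, for each $k \geq n$, a walk $w^{(k)} \in W(G_k,l)$ with $\fai_{k,n}(w^{(k)}) = w$, these being compatible under the bonding maps $\fai_{k+1,k}$. The desired $x$ will then be assembled from the initial vertices of the $w^{(k)}$, and the remaining orbit points from the later vertices, with consistency of $f$ guaranteed by $E_{\Gcal}$ being a single-valued map (a consequence of the \pdirectional property of each cover).

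The key step is to show, for every $k \geq m$, that the finite set $T_k := \{w' \in W(G_k,l) : \fai_{k,n}(w') = w\}$ is non-empty. Because $\fai_{k,m}$ is a composition of covers and is therefore vertex-surjective, some $v^{(k)} \in V_k$ satisfies $\fai_{k,m}(v^{(k)}) = v$. Since $E_k$ is a surjective relation, every vertex has an out-edge, and iterating this $l$ times yields a walk $w^{(k)} \in W_{v^{(k)}}(G_k,l)$. Its image $\fai_{k,m}(w^{(k)})$ lies in $W_v(G_m,l)$, so by the hypothesis $\fai_{m,n}(W_v(G_m,l)) = \{w\}$ we conclude $\fai_{k,n}(w^{(k)}) = w$, hence $w^{(k)} \in T_k$. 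For $n \leq k < m$, $T_k$ is non-empty by projecting any element of $T_m$ under $\fai_{m,k}$.

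With each $T_k$ a non-empty finite set and the $\fai_{k+1,k}$ forming an inverse system on $(T_k)_{k \geq n}$, the inverse limit is non-empty. Select a compatible sequence $w^{(k)} = (v_0^{(k)}, v_1^{(k)}, \dots, v_l^{(k)})$ with $w^{(n)} = w$, and for $k < n$ define $w^{(k)} := \fai_{n,k}(w)$. For each $j \in \{0,\dots,l\}$, the sequence $(v_j^{(k)})_{k \geq 0}$ satisfies $v_j^{(k)} = \fai_k(v_j^{(k+1)})$ and thus defines a point of $X$. Let $x$ be the $j=0$ such point; a short induction using $(v_j^{(k)}, v_{j+1}^{(k)}) \in E_k$ for every $k$ together with the coordinatewise nature of $f$ yields $f^j(x) = (v_j^{(k)})_{k \geq 0}$, so that $\fai_{\infty,n}(f^j(x)) = v_j^{(n)} = w_j$ for each $j$. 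Thus $w$ is $(\infty,x)$-projective.

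The main obstacle is the non-emptiness of $T_k$ for $k > m$: covers do not in general admit path lifting from an arbitrarily prescribed initial vertex, so $w$ cannot simply be lifted level-by-level. The stated hypothesis sidesteps this exactly because it forces any walk that happens to start at a lift of $v$ to project to $w$; compactness (König's lemma on the tree of partial compatible liftings) then promotes these isolated lifts into a coherent inverse-limit lift.
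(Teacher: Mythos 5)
Your proof is correct, but it takes a different and noticeably heavier route than the paper's. The paper simply picks a point $x \in U(v) = \fai_{\infty,m}^{-1}(v)$ (non-empty by the structure theory already in place, since $\Ucal_{\Gcal,m} \in \Decomp(X)$ consists of non-empty sets), observes that $\fai_{\infty,m}(w(x,l))$ is a walk in $G_m$ starting at $v$, and concludes from the uniqueness hypothesis that its projection to $G_n$ must be $w$; that is the whole argument. You instead rebuild the needed point of $X$ from scratch: you produce a lift $T_k \neq \kuu$ at every level $k \ge m$ using vertex-surjectivity of $\fai_{k,m}$ and surjectivity of the relation $E_k$, apply the uniqueness hypothesis at each level to see that every such lift projects to $w$, and then run K\"onig's lemma on the inverse system $(T_k)$ to assemble a coherent thread, finally checking that the resulting $l+1$ threads form an orbit segment because $E_{\Gcal}$ is single-valued. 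All of these steps are sound (in particular, edge-surjectivity of the covers does give vertex-surjectivity since each $E_k$ is a surjective relation, and $T_n = \seb w \sen$ so the selected compatible sequence does start from $w$). What your approach buys is self-containedness — you never invoke the non-emptiness of $U(v)$ or the fact that real orbits project to walks — at the cost of re-deriving, in this special case, facts that the paper's Theorem \ref{thm:structure} and the notation \usualnotationss already supply. The shared essential idea is identical in both proofs: the uniqueness hypothesis forces any walk sitting over $v$ to project onto $w$.
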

\begin{proof}
Take some $x \in U(v)$.
There exists an $(\infty,x)$-projective walk $w' = (v_0 = v,v_1,\dots,v_l) \in W(G_m,l)$.
Then, $\fai_{m,n}(w')$ is an $(\infty,x)$-projective walk that is also $(m,v)$-projective.
Therefore, we get $w = \fai_{m,n}(w')$, and $w$ is $(\infty,x)$-projective.
\end{proof}

\begin{lem}\label{lem:n+lprojective->only}
Let $n \bni$ and $l \bpi$.
Then, for all $m \ge n+l$ and $v \in V_m$, $\fai_{m,n}(W_v(G_{m},l))$ consists of a single element.
\end{lem}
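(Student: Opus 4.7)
The plan is induction on $l$, exploiting the fact (stated just after Definition~\ref{defn:cover}) that each composition $\fai_{m,n}$ of covers is itself a cover, hence in particular \pdirectional.

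For the base case $l = 1$, let $m \ge n + 1$ and $v \in V_m$. Any two length-$1$ walks in $W_v(G_m, 1)$ have the form $(v,u)$ and $(v,u')$ with $(v,u),(v,u') \in E_m$, so \pdirectional ity of $\fai_{m,n}$ yields $\fai_{m,n}(u) = \fai_{m,n}(u')$, and both walks project to the single walk $(\fai_{m,n}(v), \fai_{m,n}(u))$ in $W(G_n,1)$.

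For the inductive step, assume the conclusion for some $l \ge 1$, and let $m \ge n + l + 1$ and $v \in V_m$. Given two walks $w = (v, v_1, \dotsc, v_{l+1})$ and $w' = (v, v_1', \dotsc, v_{l+1}')$ in $W_v(G_m, l+1)$, factor $\fai_{m,n} = \fai_{m-1,n} \circ \fai_{m-1}$. The \pdirectional ity of the single cover $\fai_{m-1}: G_m \to G_{m-1}$ forces $\fai_{m-1}(v_1) = \fai_{m-1}(v_1')$, so the tails
\[\bigl(\fai_{m-1}(v_1), \fai_{m-1}(v_2), \dotsc, \fai_{m-1}(v_{l+1})\bigr) \quad\text{and}\quad \bigl(\fai_{m-1}(v_1'), \fai_{m-1}(v_2'), \dotsc, \fai_{m-1}(v_{l+1}')\bigr)\]
both lie in $W_{\fai_{m-1}(v_1)}(G_{m-1}, l)$. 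Since $m-1 \ge n+l$, the induction hypothesis at the triple $(l, m-1, n)$ forces these tails to share a common image in $W(G_n, l)$ under $\fai_{m-1,n}$; prepending the common image $\fai_{m,n}(v)$ of the initial vertex then gives $\fai_{m,n}(w) = \fai_{m,n}(w')$.

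Conceptually, each cover pins down one step beyond any chosen starting vertex, so composing $l$ covers pins down $l$ steps. I foresee no real obstacle beyond the trivial index check $m - 1 \ge n + l$ at the recursive call, which is immediate from $m \ge n + l + 1$.
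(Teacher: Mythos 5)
Your proof is correct and follows essentially the same route as the paper's: induction on $l$, peeling off one cover at the top level and using \pdirectional ity to pin down the image of the next vertex before invoking the inductive hypothesis on the tails. The only cosmetic difference is that the paper first reduces to the case $m = n+l$ and then inducts, whereas you carry the general $m$ through the induction; both are fine.
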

\begin{proof}
It is sufficient to show that, for all $v \in V_{n+l}$, $\fai_{n+l,n}(W_v(G_{n+l},l))$ consists of a single element.
We show this by induction on $l$.
When $l = 1$, this lemma simply states that $\fai_{n+1,n}$ is a cover.
Suppose that this lemma is satisfied for $l =k-1$.
Let $v \in G_{n+k}$.
Let $w = (v_0 = \fai_{n+k,n+k-1}(v),v_1,v_2,\dots,v_k)  \in \fai_{n+k,n+k-1}(W_v(G_{n+k},k))$.
Because $\fai_{n+k,n+k-1}$ is a cover, $v_1$ is determined independently of the choice of $w$.
We denote this vertex as $u \in V_{n+k-1}$.
We then obtain $\seb (v_1 = u,v_2,\dots,v_k) ~|~ (v_0 = \fai_{n+k,n+k-1}(v),v_1,v_2,\dots,v_k) \in \fai_{n+k,n+k-1}(W_v(G_{n+k},k)) \sen \subseteq W_u(G_{n+k-1},k-1)$.
Then, by the inductive hypothesis, $\fai_{n+k-1,n}(W_u(G_{n+k-1},k-1))$ consists of a single element.
This concludes the proof.
\end{proof}

\begin{prop}\label{prop:m-projective->real}
Let $n \bni$, $l \bpi$ and $m \ge n+l$.
Then, every $w \in \fai_{m,n}(W(G_m,l))$ is $\infty$-projective.
\end{prop}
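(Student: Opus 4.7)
The plan is to chain together the two preceding lemmas. Given any $w \in \fai_{m,n}(W(G_m,l))$, I first pick a preimage $w' \in W(G_m,l)$ with $\fai_{m,n}(w') = w$, and let $v \in V_m$ be its initial vertex. By construction, $w$ is $(m,v)$-projective, so it belongs to the set $\fai_{m,n}(W_v(G_m,l))$.

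Next, I invoke Lemma \ref{lem:n+lprojective->only}, which applies precisely because the hypothesis $m \ge n+l$ is assumed: it yields that $\fai_{m,n}(W_v(G_m,l))$ consists of a single element. Hence $w$ is the \emph{only} $(m,v)$-projective walk of length $l$ in $W(G_n,l)$.

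Finally, Lemma \ref{lem:only(m,v)-projective->real} is exactly tailored to this situation and gives that such a $w$ is $\infty$-projective, completing the argument.

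There is no real obstacle here; the proposition is essentially a two-line corollary obtained by combining the two lemmas, and the only thing to verify is that the initial vertex $v = i(w')$ is a legitimate witness so that Lemma \ref{lem:n+lprojective->only} can be applied with that specific $v$. The hypothesis $m \ge n+l$ is used only through Lemma \ref{lem:n+lprojective->only}, and the existence of the preimage $w'$ is built into the assumption $w \in \fai_{m,n}(W(G_m,l))$.
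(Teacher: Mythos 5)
Your argument is correct and is exactly the paper's proof: pick a preimage $w'$ with initial vertex $v$, apply Lemma \ref{lem:n+lprojective->only} (using $m \ge n+l$) to see that $w$ is the unique $(m,v)$-projective walk of length $l$, and conclude by Lemma \ref{lem:only(m,v)-projective->real}. No differences worth noting.
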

\begin{proof}
By assumption, there exists some $v \in V_m$ such that $w \in \fai_{m,n}(W_v(G_m,l))$.
By Lemma \ref{lem:n+lprojective->only}, $w$ is the only $(m,v)$-projective walk of length $l$.
Thus, by Lemma \ref{lem:only(m,v)-projective->real}, $w$ is $\infty$-projective.
\end{proof}
%
%
%
%
%
%
%

\section{Minimal Cantor systems}\label{sec:minimalCantorsystems}
A topological dynamical system $(X,f)$ is said to be minimal if there exist no closed subsets $\emptyset  \ne Y \subsetneq X$ that satisfy $f(Y)=Y$.
\begin{prop}
A minimal 0-dimensional system $(X,f)$ is a Cantor system or consists of a single periodic orbit.
\end{prop}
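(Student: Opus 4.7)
The approach is a dichotomy on whether $X$ contains any isolated point. If it does not, then $X$ is a non-empty compact metrizable $0$-dimensional perfect space, and Brouwer's classical characterization of the Cantor set gives that $X$ is homeomorphic to $C$; by the definition recalled at the start of Section \ref{sec:preliminaries}, $(X,f)$ is then a Cantor system and we are done. The remaining case is to show: if some $x_0 \in X$ is isolated, then $X$ consists of a single finite periodic orbit.

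Assume $x_0 \in X$ is isolated, so $\{x_0\}$ is clopen. I would first establish that the $\omega$-limit set
\[
\omega(x_0) := \bigcap_{N \geq 0} \overline{\{f^n(x_0) \mid n \geq N\}}
\]
equals $X$. The key auxiliary fact is $f(\omega(x_0)) = \omega(x_0)$. The inclusion $f(\omega(x_0)) \subseteq \omega(x_0)$ follows from continuity of $f$ together with shift-invariance of the tails $\{f^n(x_0) \mid n \geq N\}$. For the reverse inclusion, given $y = \lim_k f^{n_k}(x_0) \in \omega(x_0)$ with $n_k \to \infty$, compactness of $X$ yields a convergent subsequence of $f^{n_k - 1}(x_0)$ with limit $z$; then $z \in \omega(x_0)$ and $f(z) = y$ by continuity. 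Note that this uses only that $f$ is a continuous surjection, not injectivity. Since $\omega(x_0)$ is non-empty, closed, and satisfies $f(\omega(x_0)) = \omega(x_0)$, the minimality hypothesis forces $\omega(x_0) = X$.

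In particular $x_0 \in \omega(x_0)$, so there is a subsequence $n_k \to \infty$ with $f^{n_k}(x_0) \to x_0$; because $\{x_0\}$ is open, this convergence must be eventual equality, so $x_0$ is periodic of some minimal period $n \geq 1$. The finite orbit $O = \{x_0, f(x_0), \dots, f^{n-1}(x_0)\}$ is then non-empty, closed, and satisfies $f(O) = O$, so minimality gives $O = X$. The only step requiring any care is the identity $f(\omega(x_0)) = \omega(x_0)$, which is a standard compactness argument; I do not expect a genuine obstacle.
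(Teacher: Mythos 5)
Your proof is correct and follows essentially the same route as the paper's: the dichotomy on isolated points, Brouwer's characterization in the perfect case, and in the isolated case the fact that minimality forces $\omega(x_0)=X$, whence $x_0$ is periodic and its orbit is all of $X$. You merely supply the standard compactness details (the identity $f(\omega(x_0))=\omega(x_0)$) that the paper leaves implicit.
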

\begin{proof}
Suppose that there exists an isolated point $x_0$.
Because $(X,f)$ is minimal, the $\omega$-limit set $\omega(x_0)$ is $X$ itself.
Because $x_0$ is isolated, $f^n(x_0) = x_0$ for some $n > 0$.
By minimality, $(X,f)$ consists of a single periodic orbit.
It is well known that a compact, metrizable 0-dimensional space is homeomorphic to a Cantor set if there exist no isolated points.
Therefore, if $X$ does not have isolated points, it is homeomorphic to $C$.
\end{proof}

\begin{defn}
Let $\epsilon > 0$.
A (finite) subset $A \subseteq X$ is {\it $\epsilon$-dense} if, for all $y \in X$, there exists some $x \in A$ such that $d(x,y) < \epsilon$.
\end{defn}
\begin{defn}
Let $(X,f)$ be a topological dynamical system and $l \bni$.
An $x \in X$ is {\it $(l,\epsilon)$-dense} if $\seb f^i(x)~|~ 0 \le i \le l \sen$ is $\epsilon$-dense.
\end{defn}

\begin{lem}\label{lem:epsilon-dense}
A topological dynamical system is minimal if and only if the following condition is satisfied.\\
\noindent {\rm (M-1)}\quad For all $\epsilon > 0$, there exists an $l \bni$ such that every $x \in X$ is $(l,\epsilon)$-dense.
\end{lem}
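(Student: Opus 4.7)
The plan is to prove both directions separately, using the classical Birkhoff-style argument.

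For the forward direction, assume $(X,f)$ is minimal. First I would note that, in a minimal system, the forward orbit of every point is dense: for any $x \in X$, the $\omega$-limit set $\omega(x)$ is a non-empty closed set satisfying $f(\omega(x)) = \omega(x)$, so by minimality $\omega(x) = X$, which forces $\overline{\{f^n(x) : n \geq 0\}} = X$. Fix $\epsilon > 0$. For each $x \in X$, choose $l(x) \in \Nonne$ so that $\{f^n(x) : 0 \le n \le l(x)\}$ is $(\epsilon/2)$-dense. The key step is to upgrade this pointwise statement to a uniform one: by continuity of each $f^n$ with $n \leq l(x)$, there is an open neighborhood $V_x \ni x$ such that for every $x' \in V_x$ and $0 \le n \le l(x)$, we have $d(f^n(x'),f^n(x)) < \epsilon/2$, which forces $\{f^n(x') : 0 \le n \le l(x)\}$ to be $\epsilon$-dense. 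Covering the compact space $X$ by finitely many such neighborhoods $V_{x_1},\dotsc,V_{x_k}$ and taking $l := \max_i l(x_i)$ gives a uniform bound, since any orbit segment of length $\ge l(x_i)$ is also $\epsilon$-dense.

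For the converse, I would argue by contradiction. Suppose $(X,f)$ is not minimal, so there exists a closed $\kuu \ne Y \subsetneq X$ with $f(Y) = Y$; in particular $Y$ is forward-invariant. Pick $x_0 \in X \setminus Y$ and set $\epsilon := d(x_0, Y)/2 > 0$, which is positive because $Y$ is closed. Applying (M-1) to this $\epsilon$, there is some $l$ such that every point is $(l,\epsilon)$-dense; pick any $y \in Y$. Then some $f^n(y)$ with $0 \le n \le l$ satisfies $d(f^n(y), x_0) < \epsilon = d(x_0,Y)/2 < d(x_0,Y)$. Since $f^n(y) \in Y$ by forward invariance, this contradicts the definition of $d(x_0,Y)$, completing the proof.

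The only non-routine step is the uniformization in the forward direction; everything else is a direct unpacking of definitions combined with compactness of $X$. I do not anticipate further obstacles.
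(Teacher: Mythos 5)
Your proof is correct and follows essentially the same route as the paper: the hard direction uses pointwise density of forward orbits, the openness of the $(l,\epsilon)$-dense condition via continuity of $f^0,\dotsc,f^{l(x)}$, and compactness to extract a uniform $l$, while the easy direction is just the observation that (M-1) forces every forward orbit to be dense. Your write-up merely fills in the details the paper leaves as ``a standard argument.''
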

\begin{proof}
Suppose that $(X,f)$ satisfies condition (M-1).
Then, it is clear that every orbit of $(X,f)$ is dense.
Therefore, $(X,f)$ is minimal.
Conversely, suppose that $(X,f)$ is minimal.
Then, every orbit of $(X,f)$ is dense.
Let $\epsilon > 0$ and $x \in X$.
Then, there exists an $l_x \bni$ such that $x$ is $(l_x,\epsilon)$-dense.
As this is an open condition, there exists a neighborhood $U$ of $x$ such that, for all $x' \in U$, $x'$ is $(l_x,\epsilon)$-dense.
Because $X$ is compact, by a standard argument, we can find an $l \bni$ satisfying condition (M-1).
\end{proof}
Let $\Gcal$ be a sequence $G_0 \getsby{\fai_0} G_1 \getsby{\fai_1} G_2 \getsby{\fai_2} \dotsb$ of covers.
We use the notation of \usualnotationss given in \S \ref{sec:preliminaries}.
\begin{thm}\label{thm:minimal-main}
Let $(X,f)$ be a Cantor system.
Then, the following are mutually equivalent:
\begin{enumerate}
\item\label{thm:minimal-main:minimal} $(X,f)$ is minimal,
%
%
\item\label{thm:minimal-main:Edge} $ \forall n \bni,\ \exists N > n \myand \exists l \bpi, \text{ such that } \forall m \ge N \myand \forall w \in W(G_m,l),\ E_n(w) = E_n$,
\item\label{thm:minimal-main:Vertex} $ \forall n \bni,\ \exists N > n \myand \exists l \bpi, \text{ such that } \forall m \ge N \myand \forall w \in W(G_m,l),\ V_n(w) = V_n$,
%
%
\item\label{thm:minimal-main:Cycle:Edge} $ \forall n \bni,\ \exists N > n \text{ such that } \forall m \ge N \myand \forall c \in \sC(G_m),\ E_n(c) = E_n$,
\item\label{thm:minimal-main:Cycle:Vertex} $ \forall n \bni,\ \exists N > n \text{ such that } \forall m \ge N \myand \forall c \in \sC(G_m),\ V_n(c) = V_n$,
%
%
\item\label{thm:minimal-main:Edge2} $ \forall n \bni,\ \exists N > n, \forall m \ge N, \exists l_m \bpi \mysuchthat \forall w \in W(G_m,l_m),\ E_n(w) = E_n$, and
\item\label{thm:minimal-main:Vertex2} $ \forall n \bni,\ \exists N > n, \forall m \ge N, \exists l_m \bpi \mysuchthat \forall w \in W(G_m,l_m),\ V_n(w) = V_n$.
\end{enumerate}
\end{thm}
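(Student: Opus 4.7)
The plan is to run the cycle $(a)\Rightarrow(b)\Rightarrow(c)\Rightarrow(g)\Rightarrow(a)$ as the backbone, and to graft on $(b)\Rightarrow(d)\Rightarrow(e)\Rightarrow(g)$ together with $(b)\Rightarrow(f)\Rightarrow(g)$ so that the remaining conditions come along for free. The implications $(b)\Rightarrow(c)$, $(b)\Rightarrow(f)$, $(c)\Rightarrow(g)$, $(f)\Rightarrow(g)$, and $(d)\Rightarrow(e)$ are immediate: setting $l_m := l$ handles the first four, and $(d)\Rightarrow(e)$ follows because $E_n$ is a surjective relation, so $V_n$ coincides with the set of endpoints of $E_n$. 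The four substantive steps are therefore $(a)\Rightarrow(b)$, $(b)\Rightarrow(d)$, $(e)\Rightarrow(g)$, and $(g)\Rightarrow(a)$.

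For $(a)\Rightarrow(b)$, fix $n \benonne$. For each edge $e = (u,v) \in E_n$, the identification $G_n \cong (\Ucal_{\Gcal,n}, \funcdecomp{f}{\Ucal_{\Gcal,n}})$ from (N-5) yields a point $x_e \in U(u)$ with $f(x_e) \in U(v)$; by continuity of $f$ and clopenness of $U(u)$, $U(v)$ there is $\delta_e > 0$ with $B(x_e,\delta_e) \subseteq U(u)$ and $f(B(x_e,\delta_e)) \subseteq U(v)$. Put $\epsilon := \min_{e \in E_n}\delta_e > 0$. Lemma \ref{lem:epsilon-dense} converts minimality into an $l \bpi$ such that every $x \in X$ is $(l,\epsilon)$-dense. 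Set $L := l + 1$ and $N := n + L$. For any $m \ge N$ and $w \in W(G_m,L)$, Proposition \ref{prop:m-projective->real} gives $\fai_{m,n}(w) = \fai_{\infty,n}(w(x,L))$ for some $x \in X$. The $(l,\epsilon)$-density of $x$ forces, for each $e = (u,v) \in E_n$, an index $i$ with $0 \le i \le l$ and $f^i(x) \in B(x_e,\delta_e)$, whence $f^i(x) \in U(u)$ and $f^{i+1}(x) \in U(v)$; since $i+1 \le L$, the edge $e$ appears in $E_n(w)$. Hence $E_n(w) = E_n$, establishing (b) with parameter $L$.

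For $(b)\Rightarrow(d)$, reuse $N$ and $l$. Given $m \ge N$ and $c \in \sC(G_m)$, starting at any vertex of $c$ and walking around $c$ for $l$ steps produces $w \in W(G_m,l)$ with $E(w) \subseteq E(c)$, so (b) yields $E_n \subseteq E_n(c)$, i.e., $E_n(c) = E_n$. For $(e)\Rightarrow(g)$, put $l_m := |V_m|$; a walk $w = (v_0,\dots,v_{l_m}) \in W(G_m, l_m)$ has $|V_m|+1$ vertices, so pigeonhole supplies $0 \le i < j \le l_m$ with $v_i = v_j$, and a circuit $c \in \sC(G_m)$ can be extracted from the cycle $(v_i,v_{i+1},\dots,v_j)$ as a subsequence with $V(c) \subseteq V(w)$. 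For $m$ at least the $N$ produced by (e), $V_n(c) = V_n$, whence $V_n(w) \supseteq V_n(c) = V_n$. Finally, for $(g)\Rightarrow(a)$, given $\epsilon > 0$ pick $n$ with $\mesh(\Ucal_{\Gcal,n}) < \epsilon$, take the $N$ from (g), fix any $m \ge N$, and set $l := l_m$. For every $x \in X$, the walk $\fai_{\infty,m}(w(x,l)) \in W(G_m,l)$ satisfies
\[
V_n = V_n(\fai_{\infty,m}(w(x,l))) = \{\,\fai_{\infty,n}(f^i(x)) : 0 \le i \le l\,\},
\]
so $\{f^i(x) : 0 \le i \le l\}$ meets every element of $\Ucal_{\Gcal,n}$; since $\mesh(\Ucal_{\Gcal,n}) < \epsilon$, this orbit segment is $\epsilon$-dense, and Lemma \ref{lem:epsilon-dense} delivers minimality.

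The main obstacle lies in $(a)\Rightarrow(b)$: the length parameter must be chosen once and yet work uniformly for every level $m \ge N$. Proposition \ref{prop:m-projective->real} is precisely what makes this automatic, for once $l$ is fixed by minimality, every walk in $W(G_m, L)$ with $m \ge n+L$ is the shadow of a genuine orbit and therefore inherits the $(l,\epsilon)$-density that was engineered to saturate $E_n$ at level $n$.
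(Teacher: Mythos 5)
Your proof is correct and takes essentially the same route as the paper's: the same reduction of minimality to $(l,\epsilon)$-density via Lemma \ref{lem:epsilon-dense}, the same use of Proposition \ref{prop:m-projective->real} to make one length parameter work uniformly for all $m\ge N$, and the same pigeonhole extraction of a circuit from a walk of length $\sharp V_m$. The only cosmetic differences are that in (a)$\Rightarrow$(b) you choose a witness ball for each edge of $E_n$ where the paper works with the join partition $\Ucal_{\Gcal,n}\vee f^{-1}\Ucal_{\Gcal,n}$ and its $\dist$, and your implication diagram is routed slightly differently while covering the same substantive steps.
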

\begin{proof}
%
%
It is clear that \ref{thm:minimal-main:Edge} implies \ref{thm:minimal-main:Vertex},
\ref{thm:minimal-main:Cycle:Edge} implies \ref{thm:minimal-main:Cycle:Vertex}, and
\ref{thm:minimal-main:Edge2} implies \ref{thm:minimal-main:Vertex2}.
%
%
Let us show that \ref{thm:minimal-main:minimal} implies \ref{thm:minimal-main:Edge}.
Suppose that $(X,f)$ is minimal.
Let $n \bni$.
Let $\Ucal = \Ucal_{\Gcal,n} \vee f^{-1}\Ucal_{\Gcal,n}$, which consists of non-empty sets by definition.
Let $u,u' \in \Ucal_{\Gcal,n}$.
Then, we have that $u \cap f^{-1}(u') \nekuu$ if and only if $(u,u') \in \funcdecomp{f}{\Ucal_{\Gcal,n}}$.
The map $V_n \ni u \mapsto U(u) \in \Ucal_{\Gcal,n}$ is a graph isomorphism from $(V_n,E_n)$ to $(\Ucal_{\Gcal,n},\funcdecomp{f}{\Ucal_{\Gcal,n}})$.
For every $x \in X$ and $k \bpi$, there exists a $v_i \in \Ucal~(0 \le i \le k)$ such that $f^i(x) \in v_i$ for every $0 \le i \le k$.
If we write $v_i = u_i \cap f^{-1}(u'_{i}) \in \Ucal$ for $0 \le i \le k$, then every edge $(u_i,u'_i)$ $(0 \le i \le k)$ is an element of $E_n(w(x,k+1))$.
Let $0 < \epsilon \le \dist(\Ucal)$.
By Lemma \ref{lem:epsilon-dense}, there exists an $l' \bpi$ such that all $x \in X$ are $(l',\epsilon)$-dense.
Thus, for every $x \in X$, $w(x,l')$ passes through all elements of $\Ucal$.
Let $l = l'+1$.
Then, $E_n(w(x,l)) = E_n$ for all $x \in X$.
Because $\mesh(\Ucal_{\Gcal,j}) \to 0$ as $j \to \infty$, there exists an $N' > n$ such that $\mesh(\Ucal_{\Gcal,N'}) < \epsilon$.
Hence, $\Ucal_{\Gcal,N'}$ refines $\Ucal$.
Let $N = N'+l$ and $m \ge N$,
and take some arbitrary $w \in W(G_m,l)$.
By Proposition \ref{prop:m-projective->real}, $\fai_{m,N'}(w)$ is $\infty$-projective.
Therefore, there exists an $x \in X$ such that $\fai_{m,N'}(w) = \fai_{\infty,N'}(w(x,l))$.
Hence, $E_n(w) = E_n(\fai_{m,N'}(w)) = E_n(\fai_{\infty,N'}(w(x,l))) = E_n(w(x,l)) = E_n$, as desired.
%
%
Because treading on a circuit is a special kind of walk, it is evident that \ref{thm:minimal-main:Edge} implies \ref{thm:minimal-main:Cycle:Edge}.
In the same way, \ref{thm:minimal-main:Vertex} implies \ref{thm:minimal-main:Cycle:Vertex}.
%
%
Let us show that \ref{thm:minimal-main:Cycle:Edge} implies \ref{thm:minimal-main:Edge2}.
Suppose that \ref{thm:minimal-main:Cycle:Edge} is satisfied,
and let $n \bni$.
Then, there exists an $N > n$ such that, for all $m \ge N$ and for all $c \in \sC(G_m)$, $E_n(c) = E_n$.
If we fix some $m \ge N$ and $l_m \ge \sharp V_m$,
then for every $w = \pstrz{v}{l_m} \in W(G_m,l_m)$,  there exist $s,t \in [0,l_m]$ ($s < t$) with $v_s = v_t$.
Thus, there exists a $c \in \sC(G_m)$ such that $w$ passes through all edges of $c$.
By the assumption, it follows that $E_n(c) = E_n$.
Therefore, as desired, we have $E_n(w) \supseteq E_n(c) = E_n$.
In the same way, \ref{thm:minimal-main:Cycle:Vertex} implies \ref{thm:minimal-main:Vertex2}.
%
%
Let us show that \ref{thm:minimal-main:Vertex2} implies \ref{thm:minimal-main:minimal}.
Assume that condition \ref{thm:minimal-main:Vertex2} is satisfied,
and let $\epsilon > 0$.
Take $n \bni$ such that $\mesh(\Ucal_{\Gcal,n}) < \epsilon$.
Then, by the assumption, there exist  $l \bpi$ and $m > n$ such that, for all $w \in W(G_m,l)$, it follows that $V_n(w) = V_n$.
Let $x \in X$.
Because $\fai_{\infty,m}(w(x,l)) \in W(G_m,l)$, it then follows that $V_n(w(x,l)) = V_n$.
Therefore, $x$ is $(l,\epsilon)$-dense.
By Lemma \ref{lem:epsilon-dense}, we have the desired result.
\end{proof}

\section{Invariant measures}\label{sec:invariant-measures}
In the first half of \S 3 of \cite{GM}, Gambaudo and Martens describe the combinatorial construction of invariant measures in relation to graph covers.
Following their work, we examine this for our case of 0-dimensional systems and general graph covers.
We are forced to use the notion of the graph circuit explicitly.
The system of graph circuits will represent all invariant measures.
In the second half of this section, we investigate the conditions under which our graph circuit representations are able to represent ergodic measures.
Consequently, we can formalize the condition for a 0-dimensional system to be uniquely ergodic by means of graph covers.
Let $\Gcal$ be a sequence $G_0 \getsby{\fai_0} G_1 \getsby{\fai_1} G_2 \getsby{\fai_2} \dotsb$ of covers.
We use the notation of \usualnotationss given in \S \ref{sec:preliminaries}.
Let us denote the set of all invariant Borel measures on $(X,f)$ as $\sM_f = \sM_f(X)$, and the invariant Borel probability measures as $\sP_f = \sP_f(X)$.
Both $\sM_f$ and $\sP_f$ are endowed with the weak topology.
For $n \bni$ and $v \in V_n$,
we denote:
\[ E_{n,-}(v) := \seb (u,v)~|~(u,v) \in E_n \sen \myand E_{n,+}(v) := \seb (v,u)~|~(v,u) \in E_n \sen. \]
For each $e = (u,v) \in E_n$, we denote a closed and open set $U(e) := U(u) \cap f^{-1}(U(v))$.
Note that for $e \neq e'~(e,e' \in E_n)$, it follows that $U(e) \cap U(e') = \kuu$.
\begin{lem}\label{lem:inv-on-vertex}
Let $n \bni$ and $\mu \in \sM_f$.
Then, for every $v \in V_n$, $f^{-1}(U(v))$ (resp. $U(v)$) can be written as $\bigcup_ {e \in E_{n,-}(v)} U(e)$ (resp. $\bigcup_ {e \in E_{n,+}(v)} U(e)$).
In particular, it follows that
\begin{equation}\label{eqn:measure-invariant}
\sum_{e \in E_{n,-}(v)}\mu(U(e)) = \sum_{e \in E_{n,+}(v)}\mu(U(e)).
\end{equation}
\end{lem}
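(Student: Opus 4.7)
The plan is to reduce the identity \eqref{eqn:measure-invariant} to the two set-theoretic decompositions stated in the first part of the lemma, and to derive those decompositions by combining (N-5) with the fact that $\Ucal_{\Gcal,n}$ is a partition of $X$. Under the graph isomorphism $G_n \cong (\Ucal_{\Gcal,n}, \funcdecomp{f}{\Ucal_{\Gcal,n}})$ one has $(u,v) \in E_n$ if and only if $f(U(u)) \cap U(v) \neq \kuu$, equivalently $U(u) \cap f^{-1}(U(v)) \neq \kuu$; conversely, for pairs $(u,v) \notin E_n$ this intersection is empty.

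Given this observation, I would first intersect $f^{-1}(U(v))$ with the partition $\Ucal_{\Gcal,n}$ to write $f^{-1}(U(v)) = \bigcup_{u \in V_n} \bigl(U(u) \cap f^{-1}(U(v))\bigr)$. Only the terms with $(u,v) \in E_n$ survive, and each surviving term is by definition $U(e)$ for $e = (u,v) \in E_{n,-}(v)$, yielding $f^{-1}(U(v)) = \bigcup_{e \in E_{n,-}(v)} U(e)$. The same argument applied to the decomposition $U(v) = \bigcup_{w \in V_n} \bigl(U(v) \cap f^{-1}(U(w))\bigr)$, valid because $\{f^{-1}(U(w))\}_{w \in V_n}$ also partitions $X$ (since $f$ is defined everywhere on $X$), gives $U(v) = \bigcup_{e \in E_{n,+}(v)} U(e)$. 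Both unions are disjoint by the already-noted fact that $U(e) \cap U(e') = \kuu$ for $e \neq e'$.

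Finally, the $f$-invariance of $\mu$ gives $\mu(U(v)) = \mu(f^{-1}(U(v)))$, and finite additivity applied to the two disjoint clopen decompositions immediately produces
\[ \sum_{e \in E_{n,-}(v)} \mu(U(e)) = \mu(f^{-1}(U(v))) = \mu(U(v)) = \sum_{e \in E_{n,+}(v)} \mu(U(e)), \]
which is \eqref{eqn:measure-invariant}. There is no real obstacle; the only subtle point is to remember that $\Ucal_{\Gcal,n}$ is a decomposition into nonempty clopens so that intersecting with it produces a genuinely disjoint union and no piece is lost, and that the graph-isomorphism identification in (N-5) matches the surjective relation $\funcdecomp{f}{\Ucal_{\Gcal,n}}$ with $E_n$.
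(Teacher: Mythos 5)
Your proof is correct and follows essentially the same route as the paper: the paper simply asserts the two disjoint decompositions of $f^{-1}(U(v))$ and $U(v)$ into the sets $U(e)$ and then invokes invariance of $\mu$, while you spell out why those decompositions hold via (N-5) and the partitions $\Ucal_{\Gcal,n}$ and $f^{-1}\Ucal_{\Gcal,n}$. No gaps; your version just makes explicit what the paper labels as ``easy to see.''
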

\begin{proof}
It is easy to see that $f^{-1}(U(v)) = \bigcup_ {e \in E_{n,-}(v)}(U(e))$ and $U(v) = \bigcup_ {e \in E_{n,+}(v)}(U(e))$, and these are both disjoint unions.
Therefore, we have the first statement.
Because $\mu$ is an invariant measure, the next statement follows.
\end{proof}
We denote $\sO_V := \seb U(v) ~|~ v \in V_n, n \bni \sen = \bigcup_ {n \bni} \Ucal_{\Gcal,n}$ and $\sO_E := \seb U(e)~|~ e \in E_n, n \bni \sen$.
Both $\sO_V$ and $\sO_E$ are open bases of the topology of $X$.
For an arbitrary Borel measure $\mu$ on $X$, it follows that
\begin{equation}\label{eqn:measure-vertex}
\text{for all } m > n \text{ and for all } v \in V_n, \sum_{u \in V_m, \fai_{m,n}(u) = v}\mu(U(u)) = \mu(U(v)),
\end{equation}
\begin{equation}\label{eqn:measure-edge}
\text{for all } m > n \text{ and for all } e \in E_n, \sum_{e' \in E_m, \fai_{m,n}(e') = e}\mu(U(e')) = \mu(U(e)).
\end{equation}
Conversely, suppose that non-negative real values $\mu(U)$ are assigned for all $U \in \sO_V$ (resp. $U \in \sO_E$) to satisfy (\ref{eqn:measure-vertex}) (resp. (\ref{eqn:measure-edge})).
Then, $\mu$ is a countably additive measure on the set of all open sets of $X$.
Therefore, as is well known, $\mu$ can be extended to the unique Borel measure on $X$.
A Borel measure $\mu$ on $X$, constructed in this way, is a probability measure if and only if $1 = \sum_{v \in V_n} \mu(v)$ for all $n \bni$.

\begin{lem}\label{lem:creation-of-measures}
Suppose that non-negative real values $\mu(U)$ are assigned for all $U \in \sO_E$ such that both (\ref{eqn:measure-invariant}) and (\ref{eqn:measure-edge}) are satisfied.
Then, the resulting $\mu$ is an invariant measure.
\end{lem}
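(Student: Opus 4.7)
The plan is to first invoke the extension mechanism discussed just above the lemma to obtain a Borel measure $\mu$ on $X$ from the prescribed values on $\sO_E$, and then verify $f$-invariance by reducing to the basic open sets $U(v)$ via Lemma \ref{lem:inv-on-vertex}. Since condition (\ref{eqn:measure-edge}) is, by the discussion preceding this lemma, precisely the compatibility required to extend consistently from $\sO_E$ to a (finite) Borel measure on $X$ (observe that for each $n$ the family $\{U(e):e\in E_n\}$ is a finite clopen partition of $X$, and $U(e) = \bigsqcup_{\fai_{m,n}(e')=e}U(e')$ for $m>n$), the first step reduces to merely citing that construction. In particular $\mu(X)$ is finite, equal to $\mu(U((0,0)))$ where $(0,0)$ is the unique edge of $G_0$.

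Next I will verify invariance on the clopen base $\sO_V$. Fix $n \bni$ and $v \in V_n$. By the (set-theoretic content of) Lemma \ref{lem:inv-on-vertex}, we have the disjoint decompositions
\[
U(v) \;=\; \bigsqcup_{e \in E_{n,+}(v)} U(e), \qquad
f^{-1}(U(v)) \;=\; \bigsqcup_{e \in E_{n,-}(v)} U(e).
\]
Finite additivity of $\mu$ on clopen sets therefore gives
\[
\mu(U(v)) \;=\; \sum_{e \in E_{n,+}(v)}\mu(U(e)), \qquad
\mu(f^{-1}(U(v))) \;=\; \sum_{e \in E_{n,-}(v)}\mu(U(e)),
\]
and the hypothesis (\ref{eqn:measure-invariant}) asserts precisely that the right-hand sides coincide. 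Hence $\mu(f^{-1}(U(v))) = \mu(U(v))$ for every $v \in V_n$ and every $n \bni$.

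Finally I will extend the invariance identity from the base $\sO_V$ to all Borel sets. Since $U(v)\cap U(v')$ is either empty or equals the smaller one whenever $v,v'$ range over $\bigcup_n V_n$, the family $\sO_V$ is a $\pi$-system, and because $\mesh(\Ucal_{\Gcal,n}) \to 0$ it generates the Borel $\sigma$-algebra of $X$. The two finite Borel measures $\mu$ and $\mu\circ f^{-1}$ agree on $\sO_V$ (and both assign mass $\mu(X)$ to $X$), so by Dynkin's $\pi$-$\lambda$ theorem they coincide on all Borel sets. Thus $\mu$ is $f$-invariant, as claimed.

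I do not anticipate a real obstacle here: the only subtlety is that the prescribed values live on edge sets while $f$-invariance is most naturally stated on vertex sets, and Lemma \ref{lem:inv-on-vertex} bridges the two disjoint decompositions $U(v) = \bigsqcup U(e)$ and $f^{-1}(U(v)) = \bigsqcup U(e)$ so that (\ref{eqn:measure-invariant}) becomes exactly the required invariance on the generating base.
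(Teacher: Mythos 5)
Your proposal is correct and follows essentially the same route as the paper: extend via (\ref{eqn:measure-edge}) to a Borel measure, use the disjoint decompositions of $U(v)$ and $f^{-1}(U(v))$ from Lemma \ref{lem:inv-on-vertex} together with (\ref{eqn:measure-invariant}) to get invariance on the clopen base, and then pass to all Borel sets. Your explicit appeal to Dynkin's $\pi$-$\lambda$ theorem is just the "standard argument" the paper leaves implicit.
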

\begin{proof}
Because (\ref{eqn:measure-edge}) is satisfied, $\mu$ is a Borel measure.
Let $U(v) \in \sO_V$ with $v \in V_n$ and $n \bni$.
As has been observed, we obtain $U(v) = \bigcup_ {e \in E_{n,+}(v)}(U(e))$ and $f^{-1}(U(v)) = \bigcup_ {e \in E_{n,-}(v)}(U(e))$, both of which are disjoint unions.
Therefore, by (\ref{eqn:measure-invariant}), we have $\mu(f^{-1}(U(v))) = \mu(U(v))$.
Because $v \in V_n$ and $n \bni$ are arbitrary, a standard argument shows that $\mu$ is an invariant measure.
\end{proof}
For a graph $G = (V,E)$, we denote $\Mcal(G) := \seb \sum_{e \in E} a(e)e~|~a(e) \ge 0 \myforall e \in E \sen$.
For each $v \in V$, we define the following equation:
\begin{equation*}\label{eqn:module-invariant}
\Eq(v): \qquad \sum_{e \in E_{n,-}(v)}a(e) = \sum_{e \in E_{n,+}(v)}a(e).
\end{equation*}
We denote $\Ical(G) := \seb \sum_{e \in E} a(e)e \in \Mcal(G)~|~ \Eq(v) \text{ is satisfied for all } v \in V \sen \subseteq \Mcal(G)$, and $\Pcal(G) := \seb \sum_{e \in E} a(e)e \in \Ical(G) ~|~ \sum_{e \in E}a(e) = 1 \sen \subset \Ical(G)$.
\begin{lem}\label{lem:graph-hom-linear}
Let $\fai : G' = (V',E') \to G = (V,E)$ be a cover and $\sum_{e \in E'} a(e)e \in \Ical(G')$.
Then, a linear map $\fai_* : \Ical(G') \to \Ical(G)$ is defined by $\fai_*(\sum_{e \in E'} a(e)e) := \sum_{e \in E'}a(e)\fai(e)$.
Furthermore, it follows that $\fai_*(\Pcal(G')) \subseteq \Pcal(G)$.
\end{lem}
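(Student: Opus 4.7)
The plan is to reduce the claim to a vertex-by-vertex verification of the defining equations of $\Ical(G)$. First I would put $\fai_*\bigl(\sum_{e \in E'} a(e) e\bigr)$ into canonical form as $\sum_{e' \in E} b(e')\, e'$, where
$$b(e') := \sum_{e \in \fai^{-1}(e')} a(e), \qquad e' \in E.$$
Because $\fai$ is edge-surjective, the family $\seb \fai^{-1}(e') ~|~ e' \in E \sen$ is a partition of $E'$, so this rewriting collects coefficients unambiguously. Linearity of $\fai_*$ is then immediate from linearity of the assignment $a \mapsto b$.

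To see that $\fai_*$ lands in $\Ical(G)$, I would fix $v \in V$ and check equation $\Eq(v)$ for the coefficients $b$. Writing $E_{-}(v) = \seb (u,v) \in E \sen$ and $E_{+}(v) = \seb (v,u) \in E \sen$, and using that $\fai$ is a graph homomorphism so $t(\fai(e)) = \fai(t(e))$ and $i(\fai(e)) = \fai(i(e))$, I get
$$\sum_{e' \in E_{-}(v)} b(e') = \sum_{e \in E',\, \fai(t(e)) = v} a(e) = \sum_{u \in \fai^{-1}(v)}\ \sum_{e \in E'_{-}(u)} a(e),$$
and the symmetric computation yields $\sum_{e' \in E_{+}(v)} b(e') = \sum_{u \in \fai^{-1}(v)} \sum_{e \in E'_{+}(u)} a(e)$. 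Since $\sum_{e} a(e) e \in \Ical(G')$ satisfies $\Eq(u)$ at each $u \in V'$, the two double sums agree term by term, so $\Eq(v)$ holds for $b$ at $v$.

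For the preservation of $\Pcal$, I would swap the order of summation and use once more that the fibers of $\fai$ partition $E'$:
$$\sum_{e' \in E} b(e') = \sum_{e' \in E}\ \sum_{e \in \fai^{-1}(e')} a(e) = \sum_{e \in E'} a(e),$$
so $\sum_{e \in E'} a(e) = 1$ forces $\sum_{e' \in E} b(e') = 1$.

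The argument is essentially bookkeeping, so there is no serious obstacle. The one point deserving care is the passage from the raw expression $\sum_{e \in E'} a(e) \fai(e)$, in which distinct edges of $E'$ may share the same image in $E$, to its canonical form; once this is handled by summing over fibers, the commutation of $\fai$ with the maps $i$ and $t$, together with the edge-surjectivity of $\fai$, gives both assertions at once. Notably, the \pdirectional\ part of the cover hypothesis is not used here: the conclusion holds for any edge-surjective graph homomorphism.
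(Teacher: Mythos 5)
Your proof is correct and takes essentially the same route as the paper: the paper's argument is precisely to sum the equations $\Eq(u)$ over the fiber $\fai^{-1}(v)$ to obtain $\Eq(v)$ for the image, which is what your double-sum computation makes explicit, and the preservation of $\Pcal$ likewise follows by summing coefficients over edge fibers. Your closing observation that the \pdirectional\ half of the cover hypothesis is never used is also accurate.
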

\begin{proof}
Let $x \in \Ical(G')$.
We have to show that $\fai_*(x) \in \Ical(G)$.
For each $v' \in V'$, equation $\Eq(v')$ is satisfied.
Let $v \in V$.
The sum of the equations in $\Eq(v')$ ($v' \in \fai^{-1}(v)$) implies equation $\Eq(v)$ for $\fai_*(x)$.
Therefore, it follows that $\fai_*(x) \in \Ical(G)$, as desired.
Thus, it is clear that $\fai_*(\Pcal(G')) \subseteq \Pcal(G)$.
\end{proof}

\begin{nota}
Let $G = (V,E)$ be a graph, $c \in \sC(G)$, and $s \in \Real$.
Then, we denote $sc := \sum_{e \in E(c)}se$.
For $c \in \sC(G)$, we consider $c = 1 \cdot c \in \Mcal(G)$,
and write $\tilc = (1/\Per(c)) \cdot c$.
For a subset $\Ccal \subseteq \sC(G)$, we can define:
\[ \Hull(\Ccal) := \seb \sum_{c \in \Ccal} s(c)\tilc~|~s(c) \ge 0 \myforall c \in \Ccal,~\sum_{c \in \Ccal}s(c) = 1 \sen.\]

\end{nota}
The following lemma is obvious, and we therefore omit a proof.
\begin{lem}
Let $G = (V,E)$ be a graph and $c \in \sC(G)$.
It follows that $c \in \Ical(G)$ and $\tilc \in \Pcal(G)$.
\end{lem}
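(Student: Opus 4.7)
The plan is to unpack the definitions and verify both membership conditions directly. Write $c = \pstrz{v}{l}$ with $l = \Per(c)$, $v_0 = v_l$, and $v_i = v_j$ for $0 \le i < j \le l$ only when $(i,j) = (0,l)$. Then $E(c) = \seb (v_i, v_{i+1}) ~|~ 0 \le i < l \sen$, and these $l$ edges are pairwise distinct: if $(v_i,v_{i+1}) = (v_j,v_{j+1})$ with $i < j$, then $v_i = v_j$, which by the circuit condition forces $i=0$ and $j=l$, contradicting $j < l$.

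To show $c \in \Ical(G)$, I would check the equation $\Eq(v)$ for each $v \in V$ by a three-case analysis. If $v \notin V(c)$, no edge of $c$ is incident with $v$, so both sides of $\Eq(v)$ are $0$. If $v = v_i$ for some $0 < i < l$, then among the edges of $c$, exactly $(v_{i-1},v_i)$ ends at $v$ and exactly $(v_i,v_{i+1})$ starts at $v$, so both sides equal $1$. If $v = v_0 = v_l$, then $(v_{l-1},v_l)$ is the unique edge of $c$ ending at $v$ and $(v_0,v_1)$ is the unique edge of $c$ starting at $v$; here it is important to note that $v$ appears twice in the vertex sequence but the two incidences come from two genuinely different edges, giving both sides equal to $1$. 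Hence $\Eq(v)$ holds for every $v$, i.e., $c \in \Ical(G)$.

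For the second claim, I would simply observe that $\tilc = \sum_{e \in E(c)} (1/\Per(c))\, e$, and since each $\Eq(v)$ is a homogeneous linear equation in the coefficients $a(e)$, $\Ical(G)$ is closed under nonnegative scalar multiplication; therefore $\tilc \in \Ical(G)$. Finally, $\abs{E(c)} = \Per(c) = l$ by the distinctness established above, so $\sum_{e \in E(c)}(1/\Per(c)) = l \cdot (1/l) = 1$, giving $\tilc \in \Pcal(G)$.

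There is no real obstacle; the lemma is bookkeeping. The only subtlety worth flagging is the handling of the wrap-around vertex $v_0 = v_l$, which could be double-counted if one were careless about the distinction between the vertex sequence (of length $l+1$) and the vertex set $V(c)$ (of size $l$). This is presumably why the author declared the proof obvious and omitted it.
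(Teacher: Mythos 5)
Your verification is correct and is precisely the routine check the paper has in mind --- the author states ``The following lemma is obvious, and we therefore omit a proof,'' so there is no argument in the paper to diverge from, and your three-case analysis of $\Eq(v)$ together with the count $\sharp E(c)=\Per(c)$ is the intended bookkeeping. One microscopic caveat: in the wrap-around case your claim that the two incidences at $v_0=v_l$ ``come from two genuinely different edges'' fails when $l=1$ (a self-loop is simultaneously the unique in-edge and the unique out-edge), but both sides of $\Eq(v_0)$ still equal $1$ there, so the conclusion is unaffected.
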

As the next proposition shows, $\Ical(G)$ is spanned by $\sC(G)$.
\begin{prop}\label{prop:non-negative-coef-for-circuits}
Let $G = (V,E)$ be a graph and $x \in \Ical(G)$.
Then, there exists $\seb s(c) \ge 0 \sen_{c \in \sC(G)}$ such that $x = \sum_{c \in \sC(G)}s(c)c$.
\end{prop}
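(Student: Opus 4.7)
My plan is to argue by induction on the size of the support $E^+(x) := \{e \in E : a(e) > 0\}$, where $x = \sum_{e \in E} a(e)e$. The base case $E^+(x) = \emptyset$ is immediate, since then $x = 0$ and we can take all $s(c) = 0$. For the inductive step, I will extract a circuit $c$ from the support, subtract a positive multiple of it from $x$ so as to drive at least one coefficient to zero while preserving membership in $\Ical(G)$, and then apply the inductive hypothesis.

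The key structural observation is that whenever $E^+(x)$ is nonempty, it contains a circuit. To see this, pick any $e_0 = (v_0,v_1) \in E^+(x)$. Since $x$ satisfies $\Eq(v_1)$, the total incoming weight at $v_1$ equals the total outgoing weight, and since $a(e_0) > 0$, there must exist some outgoing edge $e_1 = (v_1,v_2) \in E^+(x)$. Iterating, we produce a walk $v_0,v_1,v_2,\dotsc$ all of whose consecutive edges lie in $E^+(x)$. Because $V$ is finite, some vertex repeats; let $k$ be the smallest index at which a repetition occurs, so $v_k = v_j$ for a unique $j < k$ and $v_0,v_1,\dotsc,v_{k-1}$ are all distinct. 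Then $c := (v_j,v_{j+1},\dotsc,v_k)$ is a circuit (in the sense of the definition, since the only coincidence among its vertices is $v_j = v_k$), and $E(c) \subseteq E^+(x)$.

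Set $s := \min \{a(e) : e \in E(c)\} > 0$ and define $x' := x - sc$; explicitly, $x'$ has coefficients $a(e) - s$ on edges of $c$ and $a(e)$ elsewhere. By construction, every coefficient of $x'$ is non-negative, so $x' \in \Mcal(G)$, and at least one edge of $c$ attains coefficient $0$, so $\abs{E^+(x')} < \abs{E^+(x)}$. Moreover, since the equations $\Eq(v)$ are linear and both $x$ and $c$ satisfy them (the latter by the preceding lemma), their difference $x'$ also satisfies every $\Eq(v)$, so $x' \in \Ical(G)$.

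By the inductive hypothesis, $x' = \sum_{c' \in \sC(G)} s'(c') c'$ with $s'(c') \ge 0$. Adding back $sc$ and redefining $s(c) := s'(c) + s$ and $s(c') := s'(c')$ for $c' \ne c$ yields the desired representation $x = \sum_{c' \in \sC(G)} s(c') c'$ with all coefficients non-negative. The only subtlety is the extraction of a genuine circuit (rather than just a cycle) from the support, but this is handled by taking the first repeated vertex along the walk; the induction and the subtraction step are then routine.
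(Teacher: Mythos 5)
Your proof is correct and follows essentially the same route as the paper's: induction on the number of positively weighted edges, extraction of a circuit inside the support via the balance equations $\Eq(v)$, and subtraction of a suitable multiple to strictly shrink the support. Your handling of the base case ($x=0$) and of the first-repeated-vertex argument for getting a genuine circuit is in fact slightly cleaner than the paper's, which starts from a globally minimal edge but then takes the minimum over the circuit anyway.
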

\begin{proof}
Let $x = \sum_{e \in E}x(e)e$, where $x(e) \ge 0$ for all $e \in E$.
Let $n := \sharp \seb e ~|~ x(e) > 0 \sen$.
We use an induction on $n$.
Suppose $n = 1$.
Let $e_0 \in E$ be such that $x(e_0) > 0$, and let $v = t(e_0)$.
Because $n = 1$, $e_0$ is the only edge for which $x(e_0) > 0$,
and because $\Eq(v)$ is satisfied by every $v \in V$, it follows that $t(e_0) = i(e_0)$.
Hence, a singleton graph $(\seb v \sen, \seb e_0 = (v,v) \sen)$ that is also a circuit is contained in $G$.
Call this circuit $c_0$.
Then, $x = s(c_0)c_0$, as desired.
We now assume that the claim is satisfied for $n = k$,
and consider the case in which $n = k+1$.
Let $x \in \Ical(G)$ be written as $x = \sum_{e \in E} x(e)\cdot e$ with $x(e) \ge 0$ for all $e \in E$ and $\sharp \seb e ~|~ x(e) > 0 \sen = k+1$.
Then, there exists an edge $e_0 \in E$ such that $x(e_0)$ is the smallest positive value, i.e., $x(e) \ge x(e_0)$ or $x(e) = 0$ for all $e \in E$.
We claim that we can select successive edges $e_i~(i = 0,1,2, \dotsc)$ such that, if $e_i$ is determined, then $e_{i+1} \in E$ satisfies $t(e_i) = i(e_{i+1})$, and $x(e_i) \ge x(e_0)$ for all $i = 1,2,\dotsc$.
To show this, suppose that $e_i$ has been selected.
Because $E$ is a surjective relation, there exists a non-empty set of edges starting from $t(e_i)$. 
By $\Eq(t(e_i))$, there exists an edge $e$ in this set with $x(e) > 0$.
Once we have $x(e) > 0$, we also have that $x(e) \ge x(e_0)$ from the minimality of $x(e_0)$.
Because $\sharp E < \infty$, we have a circuit $c$ connecting some successive edges $e_l,e_{l+1},\dotsc,e_{l'}$.
Let $\alpha := \min \seb x(e_i) ~|~ l \le i \le l' \sen$ and $x(e_{i_0}) = \alpha$ with some $l \le i_0 \le l'$.
Let $x' = x - \alpha \cdot c$.
Then, the coefficient of $e_{i_0}$ in the expression of $x'$ is $0$.
Thus, if we write $x' = \sum_{e \in E}x'(e) \cdot e$,
then $\sharp \seb e ~|~ x'(e) > 0 \sen \le k$, and the similar equations are satisfied for all vertices.
By the inductive hypothesis, we have the desired result.
\end{proof}
An obvious consequence is the following, for which we omit a proof.
\begin{prop}
$\Pcal(G) = \Hull(\sC(G))$.
\end{prop}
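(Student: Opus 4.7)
The plan is to prove both inclusions separately, using the preceding Proposition \ref{prop:non-negative-coef-for-circuits} for the harder direction and direct verification for the easier one.

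First I would establish $\Hull(\sC(G)) \subseteq \Pcal(G)$. For each $c \in \sC(G)$ we have $c \in \Ical(G)$ by the previous (unproved) lemma, and $\tilc = (1/\Per(c)) c$ has coefficients summing to $\Per(c) \cdot (1/\Per(c)) = 1$, so $\tilc \in \Pcal(G)$. Since $\Ical(G)$ is closed under non-negative linear combinations (its defining equations $\Eq(v)$ are linear and homogeneous), any element $\sum_{c} s(c) \tilc$ with $s(c) \ge 0$ lies in $\Ical(G)$, and when $\sum_c s(c) = 1$ the total mass of coefficients equals $\sum_c s(c) \cdot \Per(c) \cdot (1/\Per(c)) = \sum_c s(c) = 1$, placing the element in $\Pcal(G)$.

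For the reverse inclusion $\Pcal(G) \subseteq \Hull(\sC(G))$, let $x = \sum_{e \in E} a(e) e \in \Pcal(G)$, so $\sum_{e} a(e) = 1$. By Proposition \ref{prop:non-negative-coef-for-circuits}, there exist $s(c) \ge 0$ with $x = \sum_{c \in \sC(G)} s(c) c$. Comparing the total edge-coefficient mass on both sides gives
\begin{equation*}
1 = \sum_{e \in E} a(e) = \sum_{c \in \sC(G)} s(c) \cdot \Per(c),
\end{equation*}
since each circuit $c$ contributes $s(c)$ to each of its $\Per(c)$ edges. Setting $s'(c) := s(c) \cdot \Per(c) \ge 0$, we get $\sum_c s'(c) = 1$ and
\begin{equation*}
x = \sum_{c} s(c) c = \sum_{c} s'(c) \tilc \in \Hull(\sC(G)).
\end{equation*}

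No step here should be an obstacle, since the real work is absorbed by Proposition \ref{prop:non-negative-coef-for-circuits}; the only subtlety is keeping track of the normalisation, namely that the edge-coefficient sum of $c$ is $\Per(c)$ rather than $1$, which is precisely why the rescaling $\tilc = c/\Per(c)$ appears in the definition of $\Hull$.
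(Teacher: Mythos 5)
Your proof is correct, and it is exactly the argument the paper has in mind: the paper omits the proof as an ``obvious consequence'' of Proposition \ref{prop:non-negative-coef-for-circuits}, and your two inclusions, together with the bookkeeping identity $\sum_{e}a(e)=\sum_{c}s(c)\Per(c)$ that justifies the rescaling $s'(c)=s(c)\Per(c)$, supply precisely the omitted details. Nothing further is needed.
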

\begin{rem}\label{rem:coef-for-independent-circuits}
For linearly independent $\sC(G)$, the expression $x = \sum_{c \in \sC(G)}x(c)c$ is unique for every $x \in \Ical(G)$.
Therefore, in this expression, $x(c) \ge 0$ for all $c \in \sC(G)$.
\end{rem}
Note that, in general, $\sC(G) \subset \Ical(G)$ is not linearly independent.
Therefore, even if $x \in \Ical(G)$ is expressed as $x = \sum_{c \in \sC(G)}s(c)c$, there may be a $c \in \sC(G)$ with $s(c) < 0$.
By Proposition \ref{prop:non-negative-coef-for-circuits}, we can take a basis as a subset of $\sC(G)$.
Because $\fai_{m,n} :G_m \to G_n$ is a graph cover for each $m > n$, there exists a linear map $(\fai_{m,n})_* : \Ical(G_m) \to \Ical(G_n)$.
As has been shown in Lemma \ref{lem:graph-hom-linear}, it follows that $(\fai_{m,n})_*(\Ical(G_m)) \subseteq \Ical(G_n)$, and $(\fai_{m,n})_*(\Pcal(G_m)) \subseteq \Pcal(G_n)$.
\begin{nota}\label{nota:basis}
For each $n \bni$, we denote the dimension of $\Ical(G_n)$ as $d_n$, and $\sC_n := \sC(G_n)$.
We fix a basis $B_n = \seb c_{n,1}, c_{n,2}, \dotsc,c_{n,d_n} \sen \subseteq \sC_n$ for $\Ical(G_n)$.
\end{nota}

\begin{nota}\label{nota:deltan}
We write $\Delta_n := \Pcal(G_n)$ and, for each $m > n$, define an affine map as $\xi_{m,n} := (\fai_{m,n})_*|_{\Delta_m} : \Delta_m \to \Delta_n$.
If $c \in \sC_n$, then $\tilc \in \Delta_n$.
\end{nota}
\begin{nota}
We denote an inverse limit as follows:
\begin{align*} \Icalinf := \seb (x_0,x_1,\dotsc) ~|~&\myforall i \bni,~ x_i \in \Ical(G_i), \\
 & \hspace{5mm}\text{and for all } m > n,~ (\fai_{m,n})_*(x_m) = x_n \sen,
\end{align*}
\[\Deltainf := \seb (x_0,x_1,\dotsc) ~|~\myforall i \bni,~ x_i \in \Delta_i,~\text{and for all } m > n,~ \xi_{m,n}(x_m) = x_n \sen.\]
Both $\Icalinf$ and $\Deltainf$ are endowed with the product topology, and these correspond to the subspace topology.
For each $n \bni$, we denote the projection as $\xi_{\infty,n} : \Deltainf \to \Delta_n$.
It is evident that $\xi_{\infty,n}$ is continuous.
For $\infty \ge k > j > i$, it follows that $\xi_{j,i} \circ \xi_{k,j} = \xi_{k,i}$.

\end{nota}
It is well known that $\sP_f$ is a compact, metrizable space.
By definition, $\Deltainf$ is also a compact, metrizable space.
The following lemma is obvious, and so we omit a proof.
\begin{lem}\label{lem:subsequence-converge}
Let $\bN \subseteq \Nonne$ be an infinite subset.
Let $\seb x_n \sen_{n \in \bN}$ be a sequence such that $x_n \in \Delta_n$ for all $n \in \bN$.
Then, there exists a subsequence $\seb n_i \sen_{i \bpi}$ such that, for each $m \bni$,
the sequence $\seb \xi_{n_i,m}(x_{n_i}) \sen_{n_i > m}$ converges to an element $y_m \in \Delta_m$.
In this case, it follows that $\pstrzinf{y} \in \Deltainf$.
\end{lem}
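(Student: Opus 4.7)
The plan is to perform a Cantor-style diagonal extraction, using the compactness of each $\Delta_m$ and the continuity of the bonding maps $\xi_{m',m}$.

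First, I would observe that each $\Delta_m = \Pcal(G_m)$ is a compact subset of the finite-dimensional vector space $\Mcal(G_m) \cong \Real^{\sharp E_m}$, since it is cut out by the finitely many linear equations $\Eq(v)$ for $v \in V_m$ together with $\sum_{e \in E_m}a(e) = 1$, intersected with the non-negativity orthant; this is a compact polytope.

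Next, I would run the diagonal argument. Enumerate $\bN$ increasingly. Using compactness of $\Delta_0$, choose an infinite subset $\bN_0 \subseteq \bN$ such that $\xi_{n,0}(x_n) \to y_0 \in \Delta_0$ along $\bN_0$. Inductively, given an infinite $\bN_k$ and previously chosen limit points $y_0,\dots,y_k$, use compactness of $\Delta_{k+1}$ to extract an infinite $\bN_{k+1} \subseteq \bN_k$ with $\min \bN_{k+1} > k+1$ so that $\xi_{n,k+1}(x_n) \to y_{k+1}$ along $\bN_{k+1}$. Define the diagonal sequence by letting $n_i$ be the $i$-th smallest element of $\bN_i$. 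Then for each fixed $m$, the tail $\{n_i : i \ge m\}$ is a subsequence of $\bN_m$, so $\xi_{n_i,m}(x_{n_i}) \to y_m$ as $i \to \infty$.

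Finally, to verify that $(y_0,y_1,y_2,\dots) \in \Deltainf$, I would fix $m' > m$ and use the cocycle relation $\xi_{n_i,m} = \xi_{m',m} \circ \xi_{n_i,m'}$ noted in the paper, which for each $n_i > m'$ gives
\[
\xi_{n_i,m}(x_{n_i}) = \xi_{m',m}\bigl(\xi_{n_i,m'}(x_{n_i})\bigr).
\]
Letting $i \to \infty$ and invoking continuity of the affine map $\xi_{m',m}$ yields $y_m = \xi_{m',m}(y_{m'})$, which is the compatibility required for membership in $\Deltainf$. There is no significant obstacle; the argument is entirely standard, the only delicate bookkeeping being to ensure each $\bN_{k+1}$ is both infinite and chosen inside $\bN_k$ so that the diagonal is eventually contained in every $\bN_m$.
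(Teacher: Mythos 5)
Your argument is correct and is exactly the standard diagonal-extraction argument the author has in mind --- the paper in fact omits the proof of this lemma, declaring it obvious. The three ingredients you use (compactness of each polytope $\Delta_m$, the nested subsequence extraction with the diagonal indices $n_i$ increasing and each tail contained in $\bN_m$, and the cocycle identity $\xi_{n,m}=\xi_{m',m}\circ\xi_{n,m'}$ together with continuity of the affine map $\xi_{m',m}$ to get membership in $\Deltainf$) are all present and correctly assembled.
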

\begin{lem}\label{lem:from-infinity-to-n}
Let $n \bni$.
Then, the sequence $\Delta_n \supseteq \xi_{n+1,n}(\Delta_{n+1}) \supseteq (\xi_{n+2,n})(\Delta_{n+2}) \supseteq \dotsb$ is non-increasing, and $\bigcap_{m>n}\xi_{m,n}(\Delta_m) = \xi_{\infty,n}(\Deltainf)$.
\end{lem}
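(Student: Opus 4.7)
The plan is to prove the two assertions separately, establishing the nested chain by a simple composition argument and then the equality of the intersection with the projection of the inverse limit via a compactness argument using Lemma \ref{lem:subsequence-converge}.

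For the non-increasing part, I would note that for each $m > n$, the identity $\xi_{m+1,n} = \xi_{m,n} \circ \xi_{m+1,m}$ follows from the analogous composition rule for $(\fai_{m,n})_*$ (which is itself immediate from $\fai_{m,n} = \fai_{m-1,n} \circ \fai_{m,m-1}$). Combined with $\xi_{m+1,m}(\Delta_{m+1}) \subseteq \Delta_m$ (which is part of Lemma \ref{lem:graph-hom-linear}), this gives $\xi_{m+1,n}(\Delta_{m+1}) \subseteq \xi_{m,n}(\Delta_m)$.

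For the inclusion $\xi_{\infty,n}(\Deltainf) \subseteq \bigcap_{m>n}\xi_{m,n}(\Delta_m)$, I would take any $y = \pstrzinf{y} \in \Deltainf$ and observe that, for each $m > n$, $\xi_{\infty,n}(y) = y_n = \xi_{m,n}(y_m) \in \xi_{m,n}(\Delta_m)$ by definition of $\Deltainf$, so $\xi_{\infty,n}(y)$ lies in the intersection.

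The main step is the reverse inclusion $\bigcap_{m>n}\xi_{m,n}(\Delta_m) \subseteq \xi_{\infty,n}(\Deltainf)$, which is where compactness enters. Given $x$ in the intersection, I would choose, for each $m > n$, some $z_m \in \Delta_m$ with $\xi_{m,n}(z_m) = x$. Applying Lemma \ref{lem:subsequence-converge} to $\seb z_m \sen_{m > n}$, there exists a subsequence $\seb n_i \sen_{i \bpi}$ such that for every $k \bni$ the sequence $\seb \xi_{n_i,k}(z_{n_i}) \sen_{n_i > k}$ converges to some $y_k \in \Delta_k$, with $y := \pstrzinf{y} \in \Deltainf$. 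Specializing to $k = n$, each term of the sequence is exactly $\xi_{n_i,n}(z_{n_i}) = x$, so the limit gives $y_n = x$. Hence $x = \xi_{\infty,n}(y) \in \xi_{\infty,n}(\Deltainf)$, completing the proof.

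The only nontrivial point is the existence of the coordinate-wise convergent subsequence, but this is precisely the content of the preceding Lemma \ref{lem:subsequence-converge} (itself a consequence of compactness of each $\Delta_m$ together with a diagonal argument), so there is no real obstacle beyond careful bookkeeping of indices.
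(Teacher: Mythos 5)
Your proof is correct and uses the same key ingredient as the paper: applying Lemma \ref{lem:subsequence-converge} to a chosen sequence of preimages $z_m \in \Delta_m$ of $x$ and reading off that the limit point $y \in \Deltainf$ has $y_n = x$. The paper packages this as a proof by contradiction against an arbitrary open neighborhood $O$ of $\xi_{\infty,n}(\Deltainf)$; your direct identification of $x$ as $\xi_{\infty,n}(y)$ is cleaner but not substantively different.
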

\begin{proof}
Because $\xi_{j,i}\circ\xi_{k,j} = \xi_{k,i}$ for $k > j > i$,
the first statement is clear; also, the inclusion $\bigcap_ {m>n}\xi_{m,n}(\Delta_m) \supseteq \xi_{\infty,n}(\Deltainf)$.
We have to show the other inclusion.
Let $n \bni$,
and let $O \subset \Delta_n$ be an open set such that $\xi_{\infty,n}(\Deltainf) \subset O$.
Suppose that there exists an element $x \in \bigcap_ {m>n}\xi_{m,n}(\Delta_m) \cap O^c$.
Then, there exists a sequence $\seb x_m \sen_{m > n}$ such that $x_m \in \Delta_m~(m > n)$ and $\xi_{m,n}(x_m) = x$.
Let $x_n = x$ and $x_i = \xi_{n,i}(x)$ for $0 \le i < n$.
By Lemma \ref{lem:subsequence-converge}, we have a contradiction.
Thus, $\bigcap_ {m>n}\xi_{m,n}(\Delta_m) \subseteq O$.
Because $O$ is arbitrary, we reach the desired conclusion.
\end{proof}
\begin{nota}
Let $\mu \in \sM_f$.
We write $\bar{\mu} := \pstrzinf{\mu}$, where $\mu_n = \sum_{e \in E_n}\mu(U(e))e$ for all $n \bni$.
Then, it follows that $\bar{\mu} \in \Icalinf$.
Furthermore, if $\mu \in \sP_f$, we have that $\bar{\mu} \in \Deltainf$ and $\mu_n \in \Delta_n$ for each $n \bni$.
Following \cite{GM}, we denote the map $\mu \to \bar{\mu}$ as $p_*$.
Thus, we have the maps $p_* : \sM_f \to \Icalinf$ and $p_* : \sP_f \to \Deltainf$.
\end{nota}
We now check Proposition 3.2 of \cite{GM} for our case.
\begin{prop}\label{prop:description-of-measures-by-covers}
The maps $p_* : \sM_f \to \Icalinf$ 
and $p_* : \sP_f \to \Deltainf$ are isomorphisms.
\end{prop}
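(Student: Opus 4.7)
The plan is to exhibit an explicit two-sided inverse $q_*$ for $p_*$ built from Lemma \ref{lem:creation-of-measures}, and then upgrade the resulting bijection to a homeomorphism (in fact an affine one) using compactness and the fact that $\sO_E$ is a clopen basis of the topology of $X$.

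First I would verify that $p_*$ is well defined with the claimed codomain. For $\mu \in \sM_f$ and $n \bni$, the scalar $\mu_n := \sum_{e \in E_n} \mu(U(e))\, e$ satisfies each equation $\Eq(v)$ by Lemma \ref{lem:inv-on-vertex}, so $\mu_n \in \Ical(G_n)$; compatibility $(\fai_{m,n})_*(\mu_m) = \mu_n$ for $m > n$ is exactly (\ref{eqn:measure-edge}) together with the observation that $U(e)$ is the disjoint union of the $U(e')$ for $e' \in \fai_{m,n}^{-1}(e)$. Hence $\bar{\mu} \in \Icalinf$. If additionally $\mu \in \sP_f$, then $\sum_{e \in E_n}\mu(U(e)) = \sum_{v \in V_n}\mu(U(v)) = 1$ (since $\{U(e)\}_{e \in E_{n,+}(v)}$ partitions $U(v)$), giving $\mu_n \in \Delta_n$ and thus $\bar\mu \in \Deltainf$.

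Next, for $\bar{x} = \pstrzinf{x} \in \Icalinf$ with $x_n = \sum_{e \in E_n} a_n(e)\, e$, set $\mu(U(e)) := a_n(e)$. The compatibility $(\fai_{m,n})_*(x_m)=x_n$ is exactly (\ref{eqn:measure-edge}), and each $\Eq(v)$ holding in $\Ical(G_n)$ is exactly (\ref{eqn:measure-invariant}). Lemma \ref{lem:creation-of-measures} then extends this data to a unique $\mu \in \sM_f$; call the resulting map $q_*$. If $\bar{x} \in \Deltainf$ then normalization at level $n=0$ (or at any level) forces $\mu(X) = 1$, so $q_*$ restricts to a map $\Deltainf \to \sP_f$. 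The identities $q_* \circ p_* = \mathrm{id}$ and $p_* \circ q_* = \mathrm{id}$ are immediate: both sides agree on the clopen basis $\sO_E$, and a Borel measure is determined by its values on such a basis.

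Finally I would handle continuity. Since $\sP_f$ and $\Deltainf$ are both compact metrizable (as noted in the text), it suffices to prove $p_*$ is continuous, as a continuous bijection between compact Hausdorff spaces is automatically a homeomorphism. If $\mu^{(k)} \to \mu$ weakly in $\sP_f$, then because every $U(e)$ is clopen (hence a continuity set for every Borel measure) the Portmanteau theorem gives $\mu^{(k)}(U(e)) \to \mu(U(e))$ for each $e \in E_n$ and each $n$; this is precisely coordinatewise convergence in the product topology of $\Deltainf$. Linearity of $p_*$ on $\sM_f$ and affineness on $\sP_f$ are visible from the definition. The main technical obstacle I anticipate is not conceptual but care-based: making sure to invoke Lemma \ref{lem:creation-of-measures} correctly so that the set function defined on the clopen basis is in fact countably additive and invariant, and keeping straight the two parallel statements (for $\sM_f$ and for $\sP_f$) by tracking where the probability normalization enters; everything else is an exercise in unwinding definitions.
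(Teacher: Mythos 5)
Your proposal is correct and follows essentially the same route as the paper: bijectivity comes from Lemma \ref{lem:creation-of-measures} together with equations (\ref{eqn:measure-invariant}) and (\ref{eqn:measure-edge}), and the topological part reduces to testing weak convergence against indicators of the clopen sets $U(e)$ (your Portmanteau step is the same as the paper's observation that $\int \delta_{U(e)}\,d\mu = \mu(U(e))$ with $\delta_{U(e)}$ continuous). The only caveat is that your compact-Hausdorff shortcut for continuity of the inverse applies to $p_* : \sP_f \to \Deltainf$ but not to $p_* : \sM_f \to \Icalinf$ (neither space is compact); for that case one argues directly, as the paper does, that convergence of the values $\mu(U(e))$ forces weak convergence.
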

\begin{proof}
From Lemma \ref{lem:creation-of-measures}, we have the bijectivity of these maps.
Their linearity is clear.
Let us show that $p_*$ is a homeomorphism.
For a closed and open subset $U$ of $X$, let $\delta_U$ be the characteristic function from $X$ to $\Real$.
Let $\mu \in \sM_f$, $n \bni$, and $e \in E_n$.
Then, it follows that $\int \delta_{U(e)} d\mu = \mu(U(e))$.
Thus, it is easy to see that convergence in $\sM_f$ implies convergence in $\Icalinf$.
Conversely, convergence of each $\mu(U(e))$ implies the convergence of $\int \delta_{U(e)} d\mu$.
Thus, by a standard argument, the map is a homeomorphism, as required.
The second statement is also clear.
\end{proof}
In Notation \ref{nota:basis}, we have fixed a basis $B_n = \seb c_{n,1}, c_{n,2}, \dotsc,c_{n,d_n} \sen \subseteq \sC_n$ for $\Ical(G_n)$ for each $n \bni$.
Therefore, we can state the following:
\begin{prop}\label{prop:unique-expression}
Let $\mu \in \sM_f$.
Then, there exists a unique expression $\mu_n = \sum_{1 \le i \le d_n}s_{n,i}\tilc_{n,i}~(s_{n,i} \in \Real,~ 1 \le i \le d_n,~ n \bni)$.
\end{prop}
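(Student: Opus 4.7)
The plan is to observe that this proposition is an immediate consequence of the preceding work, specifically the isomorphism established in Proposition~\ref{prop:description-of-measures-by-covers} together with the definition of $B_n$ as a basis of $\Ical(G_n)$. The only ``content'' is to translate between the $c_{n,i}$ basis and the normalized $\tilc_{n,i}$ basis.

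First, I would invoke Proposition~\ref{prop:description-of-measures-by-covers} to conclude that for any $\mu \in \sM_f$, its image $\bar{\mu} = p_*(\mu) \in \Icalinf$ has components $\mu_n \in \Ical(G_n)$ for each $n \bni$. Next, by Notation~\ref{nota:basis}, $B_n = \{c_{n,1}, \ldots, c_{n,d_n}\}$ is a fixed basis of the real vector space $\Ical(G_n)$. Hence there exist unique real coefficients $a_{n,i}$ (for $1 \le i \le d_n$) such that
\[
\mu_n = \sum_{1 \le i \le d_n} a_{n,i}\, c_{n,i}.
\]

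Finally, I would rewrite this in terms of the normalized circuits $\tilc_{n,i} = (1/\Per(c_{n,i}))\, c_{n,i}$ by setting $s_{n,i} := a_{n,i} \cdot \Per(c_{n,i})$, obtaining
\[
\mu_n = \sum_{1 \le i \le d_n} s_{n,i}\, \tilc_{n,i}.
\]
The uniqueness of the $s_{n,i}$ follows from the uniqueness of the $a_{n,i}$, since each $\Per(c_{n,i})$ is a nonzero positive integer and the $\tilc_{n,i}$ are thus nonzero scalar multiples of the basis vectors $c_{n,i}$, hence themselves form a basis of $\Ical(G_n)$.

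There is no real obstacle here; the statement is essentially a restatement of ``a basis yields unique coefficients'' once the previous proposition has placed $\mu_n$ inside $\Ical(G_n)$. The only mild subtlety worth flagging explicitly is that Remark~\ref{rem:coef-for-independent-circuits} already notes that uniqueness of such an expression requires linear independence of the chosen circuits, which is guaranteed here precisely because $B_n$ was chosen as a basis rather than as the full (possibly linearly dependent) set $\sC_n$.
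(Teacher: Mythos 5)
Your proposal is correct and is exactly the argument the paper intends: the paper states this proposition without proof, immediately after recalling that $B_n$ is a fixed basis of $\Ical(G_n)$, so the content is precisely "membership in $\Ical(G_n)$ plus unique coordinates with respect to a basis, rescaled by the periods." Your explicit handling of the rescaling $s_{n,i} = a_{n,i}\Per(c_{n,i})$ and the pointer to Remark~\ref{rem:coef-for-independent-circuits} fill in the only details the paper leaves implicit.
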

\begin{nota}
Let $\seb x_{n_i} \sen_{i \bpi}$ be a sequence such that $n_1 < n_2 < \dotsb$, and $x_{n_i} \in \Delta_{n_i}$ for all $i \bpi$.
Suppose that, for all $m \bni$, $\lim_{i \to \infty}\xi_{n_i,m}(x_{n_i}) = y_m \in \Delta_m$.
Then, we denote $\lim_{i \to \infty}x_{n_i} := (y_0,y_1,\dotsc) \in \Deltainf$.
\end{nota}
From hereon, we consider restrictions of the total set $\Nonne$ to infinite subsets of $\Nonne$.
Let $\bN \subseteq \Nonne$ be an infinite subset.
Because the restricted sequence $\seb G_n \sen_{n \in \bN}$ of $\Gcal$ induces the same inverse limit $G_{\infty}$, it is convenient to think of such restrictions.
We believe this will make it easier to manage expressions than to reconstruct the sequence $\Gcal$ itself.
\begin{defn}
Let $\bN \subseteq \Nonne$ be an infinite subset.
Let $\Ccal = \Ccal(\bN)$ be a sequence $\seb \Ccal_n \sen_{n \in \bN}$ of subsets $\Ccal_n \subseteq \sC_{n}$ for all $n \in \bN$.
We say that $\Ccal$ is a {\it system of circuits} enumerated by $\bN$.
We denote as follows:
\[\Ccalinf := \seb \seb c_{n} \sen_{n \in \bN}~|~c_n \in \Ccal_n \myforall n \in \bN, \lim_{n \to \infty}\tilc_{n} \text{ exists} \sen.
\]
For each $x = \seb c_{n} \sen_{n \in \bN} \in \Ccalinf$, we denote $\bar{x} = \lim_{n \to \infty}\tilc_{n} \in \Deltainf$.
We write $\ovCcalinf := \seb \bar{x}~|~x \in \Ccalinf \sen$.
Note that it is evident from Lemma \ref{lem:subsequence-converge} that, for all sequences of circuits $\seb c_{n} \sen_{n \in \bN}$ with $c_n \in \Ccal_n$ for all $n \in \bN$, there exists a subsequence $\seb n_i \sen_{i \bpi}$ of $\bN$ such that there exists a $\lim_{i \to \infty}\tilc_{n_i}$.
Nevertheless, this does not directly imply that $\Ccalinf \nekuu$.
\end{defn}
%
%
\if0 
\begin{lem}\label{lem:circuits-compact}
Let $\bN \subseteq \Nonne$ be an infinite subset.
For an arbitrary system of circuits enumerated by $\bN$, the subset $\ovCcalinf \subseteq \Deltainf$ is compact.
\end{lem}
\begin{proof}
Because $\Deltainf$ is compact, it is enough to show that $\ovCcalinf \subseteq \Deltainf$ is closed.
Let $x_i \in \Ccalinf~(i \bpi)$ be a sequence such that $\lim_{i \to \infty} \bar{x}_i = y$  for some $y = \pstrzinf{y} \in \Deltainf$.
We write $x_i = \seb c(i,n(i,j)) ~|~ j \bpi \sen$ such that $c(i,n(i,j)) \in \Ccal_{n(i,j)}$ for $j \bpi$ and $i \bpi$.
Let $m \bni$.
Note that $n(i,j) > m$ for $j > m +1$ independently of $i \bpi$.
By our notation, for all $i \bpi$, $\lim_{j \to \infty}\xi_{n(i,j), m}(\tilc(i,n(i,j))) = \xi_{\infty,m}(\bar{x}_i)$.
Because $\lim_{i \to \infty} \bar{x}_i = y$, for every $\epsilon > 0$, there exist arbitrarily large $i \bpi$ and $j \bpi$ such that $d(\xi_{n(i,j),m}(\tilc(i,n(i,j))),y_m) < \epsilon$.
Therefore, we can take $n_1 < n_2 < \dotsb$ from $\bN$ and $c_{n_i} \in \sC_{n_i}~(i \bpi)$ such that $\lim_{i \to \infty}\xi_{n_i,m}(\tilc_{n_i}) = y_m$ for all $m \bni$.
This concludes the proof.
\end{proof}
\fi
\begin{defn}
Let $\mu \in \sM_f$ be an invariant measure,
and let $\bN$ be an infinite subset of $\Nonne$.
Let $\Ccal$ be a system of circuits enumerated by $\bN$.
We say that $\Ccal$ {\it expresses} $\mu$ if $\bar{\mu}$ is written as $\pstrzinf{\mu}$ with $\mu_n = \sum_{c \in \Ccal_n}s(c)\tilc$\quad $(s(c) \ge 0 \myforall c \in \Ccal_n)$ for all $n \in \bN$.
\end{defn}
As a consequence of this definition, a system of circuits $\Ccal$ expresses all measures if and only if $\xi_{\infty,n}(\Delta_{\infty}) \subseteq \Hull(\Ccal_n)$ for all $n \in \bN$.
We now discuss the relation between the expression by means of $\Ccal$ and the ergodicity.
Supposed that $\mu$ is an ergodic measure and $\Ccal$ is a system of circuits that expresses $\mu$.
Then, $\bar{\mu}$ is written as $\pstrzinf{\mu}$ with $\mu_n = \sum_{c \in \Ccal_n}s(c)\tilc$\quad $(s(c) \ge 0 \myforall c \in \Ccal_n)$ for all $n \in \bN$.
As Theorem \ref{thm:ergodic-measures-are-from-essential-circuits} will show, $\bar{\mu}$ is considered to be a limit of circuits in $\Ccal$.
Some circuits of $\Ccal$ may not contribute to the limit $\bar{\mu}$.
It may be possible to choose circuits from those that have positive weights in the above expression,
but, even if we take sequences of circuits that have positive weights, the resulting measures may be far from $\bar{\mu}$.
Should the sum of weights of circuits that generate measures far from $\bar{\mu}$ be less than that of the circuits that generate measures near $\bar{\mu}$?
The next theorem answers this question in the affirmative.
This seems to be a repetition of the statement that the ergodicity is the coincidence of the time average and the space average.
The increase in $n$ for a graph $G_n$ gives a simultaneous increase in time and refinement of space.
%
%
The next theorem is essentially seen in its exact form in \cite[Theorem 3.3]{BKMS} in the context of Bratteli diagrams of finite rank.
In our way, we think we have to describe rather vaguely as is seen in the next theorem, because we are treating general 0-dimensional systems or general sequences of graph covers.
%
%
\begin{thm}\label{thm:an-ergodic-measure-is-from-circuits}
Let $\mu \in \sP_f$.
Let $\bN \subseteq \Nonne$ be an infinite subset, and
let $\Ccal$ be a system of circuits enumerated by $\bN$.
Suppose that $\mu$ is expressed by $\Ccal$, and 
$\bar{\mu}$ is written as $\pstrzinf{\mu}$ with $\mu_n = \sum_{c \in \Ccal_n}s(c)\tilc$\quad $(s(c) \ge 0 \myforall c \in \Ccal_n)$ for all $n \in \bN$.
For $n > m$ and $\epsilon > 0$, let 
$\Ccal(\mu,m, n,\epsilon) := \seb c \in \Ccal_n~|~d(\mu_m,\xi_{n,m}(\tilc)) \le \epsilon \sen$.
Let us consider the following conditions:
\enumb
\item\label{item:ergodic} $\mu$ is ergodic, and
\item\label{item:sumlim1} for all $m \in \bN$ and all $\epsilon > 0$, $\lim_{n \to \infty}\sum_{c \in \Ccal(\mu,m, n,\epsilon)}s(c) = 1$.
\enumn
Then, \ref{item:ergodic} implies \ref{item:sumlim1}.
Moreover, if each $\sC_n$ (not $\Ccal_n$) is linearly independent for all $n \in \bN$,
then \ref{item:ergodic} and \ref{item:sumlim1} are equivalent.
\end{thm}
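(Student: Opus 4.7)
The plan is to prove each direction by contradiction, using the isomorphism $\sP_f\cong\Deltainf$ of Proposition~\ref{prop:description-of-measures-by-covers} to translate between invariant measures and their circuit data.

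For (a) $\Rightarrow$ (b), I would suppose for contradiction that there exist $m\in\bN$, $\epsilon>0$, $\delta>0$, and a subsequence $\{n_i\}$ of $\bN$ along which $\sum_{c\in\Ccal(\mu,m,n_i,\epsilon)}s(c)\le 1-\delta$. Fix $\eta<\epsilon/2$ and partition $\Delta_m$ into finitely many Borel cells $B_1,\dotsc,B_K$ of diameter less than $\eta$. For each $n_i$, group the circuits of $\Ccal_{n_i}$ by the cell containing $\xi_{n_i,m}(\tilc)$, writing $s^{(k)}_{n_i}$ for the total weight of group $k$ and $\alpha_{n_i}^{(k)}\in\Delta_{n_i}$ for the normalized weighted average whenever $s^{(k)}_{n_i}>0$. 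Using Lemma~\ref{lem:subsequence-converge} and a diagonal extraction, pass to a further subsequence so that $s^{(k)}_{n_i}\to s^{(k)}$ for each $k$ and $\alpha_{n_i}^{(k)}\to\bar\alpha^{(k)}\in\Deltainf$ for each $k$ with $s^{(k)}>0$. Passing to the limit in the identity $\mu_{n_i}=\sum_k s^{(k)}_{n_i}\alpha_{n_i}^{(k)}$ gives $\bar\mu=\sum_k s^{(k)}\bar\alpha^{(k)}$ in $\Deltainf$, which by Proposition~\ref{prop:description-of-measures-by-covers} corresponds to a convex decomposition of $\mu$ in $\sP_f$. Ergodicity (extremality in $\sP_f$) forces $\bar\alpha^{(k)}=\bar\mu$ whenever $s^{(k)}>0$, so $\mu_m=\xi_{\infty,m}(\bar\alpha^{(k)})$ lies in the closure of the convex hull of $B_k$. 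That closure has the same diameter as $B_k$, which is less than $\eta<\epsilon/2$, so $B_k\subseteq B(\mu_m,\epsilon)$. Every circuit contributing to such an $s^{(k)}_{n_i}$ then lies in $\Ccal(\mu,m,n_i,\epsilon)$, and $\sum_{c\in\Ccal(\mu,m,n_i,\epsilon)}s(c)\ge\sum_{k:s^{(k)}>0}s^{(k)}_{n_i}\to 1$ contradicts $\le 1-\delta$.

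For the converse under linear independence, suppose $\mu$ is not ergodic and write $\mu=A\nu_1+(1-A)\nu_2$ with $\nu_1\ne\nu_2$ in $\sP_f$ and $0<A<1$. Linear independence of $\sC_n$ makes the nonnegative expansion $(\nu_j)_n=\sum_{c\in\sC_n}s^{\nu_j}(c)\tilc$ unique (Remark~\ref{rem:coef-for-independent-circuits}), and linearity gives $s(c)=As^{\nu_1}(c)+(1-A)s^{\nu_2}(c)$. Nonnegativity together with $s(c)=0$ for $c\notin\Ccal_n$ forces $s^{\nu_j}(c)=0$ off $\Ccal_n$, so each $\nu_j$ is also expressed by $\Ccal$, and hypothesis (b) for $\mu$ yields $\sum_{c\notin\Ccal(\mu,m,n,\epsilon)}s^{\nu_j}(c)\to 0$. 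Applying $\xi_{n,m}$ to the expansion gives $(\nu_j)_m=\sum_c s^{\nu_j}(c)\xi_{n,m}(\tilc)$, and a triangle-inequality estimate on
\[
(\nu_j)_m-\mu_m=\sum_{c\in\Ccal(\mu,m,n,\epsilon)}s^{\nu_j}(c)(\xi_{n,m}(\tilc)-\mu_m)+\sum_{c\notin\Ccal(\mu,m,n,\epsilon)}s^{\nu_j}(c)(\xi_{n,m}(\tilc)-\mu_m)
\]
bounds $d((\nu_j)_m,\mu_m)\le\epsilon+\diam(\Delta_m)\sum_{c\notin\Ccal(\mu,m,n,\epsilon)}s^{\nu_j}(c)$. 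Since the left-hand side is independent of $n$, letting $n\to\infty$ and then $\epsilon\to 0$ yields $(\nu_j)_m=\mu_m$ for every $m\in\bN$, whence $\bar\nu_j=\bar\mu$ and $\nu_j=\mu$ by Proposition~\ref{prop:description-of-measures-by-covers}, contradicting $\nu_1\ne\nu_2$.

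The hard part is Part~1: a naive split of $\Ccal_n$ into circuits ``close to'' versus ``far from'' $\mu_m$ does not yield a contradiction, because a convex combination of ``far'' projections can still land at $\mu_m$ by cancellation. The fine-partition argument circumvents this precisely by controlling the diameter inside each cell $B_k$, so that the ergodicity-forced identity $\xi_{\infty,m}(\bar\alpha^{(k)})=\mu_m$ propagates back to trap $B_k$ in an $\epsilon$-neighborhood of $\mu_m$. In Part~2, the quiet observation is that nonnegativity combined with linear independence forces the invariant components $\nu_j$ of $\mu$ to inherit expressibility by $\Ccal$; this is what lets hypothesis (b) for $\mu$ act on the weights $s^{\nu_j}(c)$, without needing an ergodic decomposition.
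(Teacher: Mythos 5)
Your proposal is correct, and both halves take a noticeably different route from the paper's. For \ref{item:ergodic}$\Rightarrow$\ref{item:sumlim1}, the paper covers $\Delta_m\setminus\{\mu_m\}$ by finitely many angular cones $\Vcal(v_k)$ of half-angle $\theta<\pi/2$ about $\mu_m$, isolates one cone retaining positive limiting weight, and exhibits a single nontrivial convex splitting $\bar{\mu}=a\mu_1+(1-a)\mu_2$ in which the component $\mu_1$ is visibly displaced from $\mu_m$ (all its constituent circuits project at distance $>\epsilon$ in a fixed direction, so no cancellation can return the average to $\mu_m$); you instead partition $\Delta_m$ into small-diameter cells, use extremality to force \emph{every} positively weighted limit component to equal $\bar{\mu}$, and back-propagate to trap each surviving cell inside the $\epsilon$-ball. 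Both devices solve the same cancellation problem you correctly identify; yours trades the cone geometry for the elementary fact that a convex hull has the same diameter as the set, which is arguably cleaner. For the converse, the paper argues contrapositively by separating $\mu_{1,m}$ from $\mu_m$ with a half-space $H=p^{-1}(\ell(-\infty,t_0/2))$ and lower-bounding the weight that $\mu_1$ must place on circuits landing in $H$ (hence outside $\Ccal(\mu,m,n,\epsilon)$); you run the implication forward, using $s^{\nu_j}(c)\le s(c)/A$ (resp.\ $s(c)/(1-A)$) to transfer condition \ref{item:sumlim1} to the components and then a direct triangle-inequality estimate to conclude $(\nu_j)_m=\mu_m$ for all $m$, i.e.\ that $\mu$ is extremal. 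Both converses use linear independence in exactly the same place (uniqueness of the nonnegative circuit expansion, so that the weights of $\mu$ dominate those of its components); your version avoids the separating-hyperplane construction and also sidesteps a small infelicity in the paper's final inequality, where $\sum_{c\in\Dcal(n)}s(1,c)\le\sum_{c\in\sC_n\setminus\Ccal(\mu,m,n,\epsilon)}s(c)$ really requires the factor $1-t_0$ that you carry explicitly.
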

\begin{proof}
Let $\mu \in \sP_f$.
Let $\bN$ and $\Ccal$ be as above.
Suppose that $\bar{\mu} = \pstrzinf{\mu} \in \Deltainf$ is written as
$\mu_i = \sum_{c \in \Ccal_i} s(c)\tilc$ with $s(c) \ge 0$ for all $c \in \Ccal_i$ and for all $i \in \bN$.
Let us show that \ref{item:ergodic} implies \ref{item:sumlim1}.
Suppose that $\mu$ is ergodic.
Fix an arbitrary $m$ and $\epsilon > 0$.
We prove by contradiction.
Suppose that there exists an $\epsilon' > 0$ such that there exist infinitely many $n > m~(n \in \bN)$ with $\sum_{c \in \Dcal(n,\epsilon)}s(c) \ge \epsilon'$, where $\Dcal(n,\epsilon) := \seb c \in \Ccal_n~|~d(\mu_m,\xi_{n,m}(\tilc)) > \epsilon \sen$.
Let $\bN'$ be the set of all such $n > m~(n \in \bN)$.
We write the space of unit vectors in the hyperplane extending $\Delta_m$ as $\Bcal_1$.
We define $\Vcal := \seb v \in \Bcal_1~|~\text{there exists a small } \delta > 0 \text{ such that } \mu_m+\delta v \in \Delta_m \sen$.
We need to consider the case in which $\Delta_m$ is not a singleton.
Because $\Delta_m$ is a convex combination in a Euclidean space, it follows that $\Vcal \nekuu$.
Let us fix $0 < \theta < \pi/2$.
For every $v \in \Vcal$, let
\[ \Vcal(v) := \left\{\, x \in \Delta_m~\Big|~x \neq \mu_m \myand \frac{(x - \mu_m)\cdot v}{|x-\mu_m|} \ge \cos\theta \,\right\}.\]
Then, there exists a finite set $\seb v_1,v_2, \dotsc, v_K \sen \subset \Vcal$ such that 
$\Delta_m\setminus \seb \mu_m \sen = \bigcup_ {1 \le k \le K} \Vcal(v_k)$.
For $n \in \bN'$ and $1 \le k \le K$, we define 
\[\Dcal(n,\epsilon,k):= \seb c \in \Dcal(n,\epsilon) ~|~ \xi_{n,m}(\tilc) \in \Vcal(v_k)\sen.\]
It follows that $\Dcal(n,\epsilon) = \bigcup_ {1 \le k \le K} \Dcal(n,\epsilon,k)$.
For $1 \le k \le K$, we define $a(n,k) := \sum_{c \in \Dcal(n,\epsilon,k)} s(c)$.
Then, for all $n \in \bN'$, it follows that $\epsilon' \le \sum_{1 \le k \le K}a(n,k)$.
Therefore, there exist a $k_0\ (1 \le k_0 \le K)$, a subsequence $\seb n_i \sen_{i \bpi}$, and an $a > 0$ such that $a(n_i,k_0) \to a$ as $i \to \infty$.
Because $v_{k_0}$ and $0 < \theta < \pi/2$ is fixed in a certain direction from $\mu_m$, it is evident that $a < 1$.
Let $\mu_1(n_i) := \sum_{c \in \Dcal(n_i,\epsilon,k_0)}\left(s(c)/a(n_i,k_0)\right)\tilc$ and $\mu_2(n_i) := \sum_{c \in \Ccal_{n_i} \setminus \Dcal(n_i,\epsilon,k_0)}\left(s(c)/(1 - a(n_i,k_0)) \right) \tilc$ for sufficiently large $i$.
It follows that $\mu_{n_i} = a(n_i,k_0)\mu_1(n_i)+(1-a(n_i,k_0))\mu_2(n_i)$.
Taking a subsequence if necessary, we obtain the limits $\mu_1 = \lim_{i \to \infty}\mu_1(n_i) \in \Deltainf$ and $\mu_2 = \lim_{i \to \infty}\mu_2(n_i) \in \Deltainf$.
It follows that $\bar{\mu} = a \mu_1 + (1-a)\mu_2$.
This contradicts the hypothesis that $\mu$ is ergodic.
%
%
%
%
%
%
To show the last statement, we assume that each $\sC_n$ $(n \in \bN)$ is linearly independent.
It is enough to show that \ref{item:sumlim1} implies \ref{item:ergodic}.
The expression $\mu_i = \sum_{c \in \Ccal_i} s(c)\tilc$ $(i \in \bN)$ is extended to $\mu_i = \sum_{c \in \sC_i}s(c)\tilc$ $(i \in \bN)$ with $s(c) = 0$ for all $c \in \sC_i\setminus \Ccal_i$ $(i \in \bN)$.
We prove by contradiction.
Suppose that \ref{item:sumlim1} is satisfied and $\mu$ is not ergodic.
Then, there exist $\mu_1, \mu_2 \in \sP_f$ and some $t_0~(0 < t_0 < 1)$ such that $\mu = (1-t_0) \mu_1 + t_0 \mu_2$.
Let us write $\bar{\mu}_{\alpha}~(\alpha = 1,2)$ as $\bar{\mu}_{\alpha} = (\mu_{\alpha,0},\mu_{\alpha,1}, \mu_{\alpha,2}, \mu_{\alpha,3}, \dotsc) \in \Deltainf$, where
$\mu_{\alpha,i} = \sum_{c \in \sC_i} s(\alpha,c)\tilc$ with $s(\alpha,c) \ge 0$ for all $c \in \sC_i$ and for all $i \in \bN$.
The linear independence implies that $s(c) = (1-t_0)s(1,c)+t_0 s(2,c)$ for all $c \in \sC_i$ $(i \in \bN)$.
There exists an $m \in \bN$ such that $\mu_{1,m} \neq \mu_{2,m}$.
Connecting $\mu_{1,m}$ and $\mu_{2,m}$, we get a line $\ell(t) := (1-t)\mu_{1,m}+t\mu_{2,m}$ $(t \in \Real)$.
Then, there exists a point $\mu_{\rm mid} := \ell(t_0/2)$ between $\mu_{1,m}$ and $\mu_m$.
Let $p : \Delta_m \to \ell$ be a linear projection onto $\ell$, and
let $\ell(-\infty,t_0/2)$ be the half line.
Then, $H := p^{-1}(\ell(-\infty,t_0/2))$ is a convex open set that contains $\mu_{1,m}$.
Let $\epsilon > 0$ be small enough that the $\epsilon$-ball around $\mu_m$ has no points in common with $H$.
Take an arbitrary $n > m$, and define $\Dcal(n) := \seb c \in \sC_n~|~\xi_{n,m}(\tilc) \in H \sen$.
Then, it follows that $\Dcal(n) \cap \Ccal(\mu,m, n, \epsilon) = \kuu$.
Because $\xi_{n,m}\left(\sum_{c \in \sC_n}s(1,c)\tilc\right) = \mu_{1,m} \in H$, there exist constants $\epsilon_1 > 0$ and $N > 0$ such that $\sum_{c \in \Dcal(n)}s(1,c) \ge \epsilon_1$ for all $n \ge N$.
As $\epsilon_1 \le \sum_{c \in \Dcal(n)}s(1,c) \le \sum_{c \in \sC_n \setminus \Ccal(\mu,m,n, \epsilon)}s(c)$ for all $n \ge N$,
it follows that $\sum_{c \in \Ccal(\mu,m,n, \epsilon)}s(c) \le 1-\epsilon_1$ for all $n > N$.
This contradicts condition \ref{item:sumlim1}.
\end{proof}
\begin{rem}
In Example \ref{example:sumlim1-but-non-ergodic}, we construct a sequence of graph covers such that $\sC_n$ is not linearly independent for each $n \bni$, in which case there exists a non-ergodic measure such that condition \ref{item:sumlim1} is satisfied.
\end{rem}
\begin{thm}\label{thm:ergodic-measures-are-from-essential-circuits}
Let $\mu \in \sP_f$ be an ergodic measure.
Let $\bN \subseteq \Nonne$ be an infinite subset.
Suppose that $\Ccal$ is a system of circuits that expresses $\mu$.
Then, it follows that $\bar{\mu} \in \ovCcalinf$.
\end{thm}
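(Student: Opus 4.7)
The plan is to construct, for each $n \in \bN$, a circuit $c_n \in \Ccal_n$ that is simultaneously ``good'' at all earlier scales $m_0, m_1, \dotsc, m_{k(n)} \in \bN$, with both the number of scales $k(n) \to \infty$ and the tolerance shrinking as $n \to \infty$, and then to verify $\lim_{n \to \infty}\tilc_n = \bar{\mu}$ in $\Deltainf$.

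First I would extract the quantitative input from Theorem~\ref{thm:an-ergodic-measure-is-from-circuits}. Since $\mu$ is ergodic and expressed by $\Ccal$, implication \ref{item:ergodic}$\Rightarrow$\ref{item:sumlim1} gives that for every $m \in \bN$ and $\epsilon > 0$, $\sum_{c \in \Ccal(\mu,m,n,\epsilon)}s(c) \to 1$ as $n \to \infty$ through $\bN$. Note that the total weight satisfies $\sum_{c \in \Ccal_n}s(c) = 1$: both $\mu_n$ and each $\tilc$ lie in $\Pcal(G_n)$, so summing the edge-coefficients of $\mu_n = \sum_c s(c)\tilc$ gives $1 = \sum_e \sum_{c \ni e} s(c)/\Per(c) = \sum_c s(c)$. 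Hence the ``bad'' weight $\sum_{c \in \Ccal_n \setminus \Ccal(\mu,m,n,\epsilon)}s(c) \to 0$.

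Second, I would set up the diagonal argument. Enumerate $\bN = \seb m_0 < m_1 < m_2 < \dotsb \sen$ and fix $\epsilon_k \searrow 0$. Using the above, for each $k$ choose $N_k \in \bN$ so large that for all $n \in \bN$ with $n \ge N_k$,
\[ \sum_{j=0}^{k}\Bigl(1 - \sum_{c \in \Ccal(\mu,m_j,n,\epsilon_k)}s(c)\Bigr) < \tfrac{1}{2}. \]
Arrange $N_0 < N_1 < \dotsb$, and define $k(n) := \max\seb k~|~n \ge N_k \sen$ for $n \ge N_0$, so $k(n) \to \infty$. For $n \ge N_0$ in $\bN$, the intersection $\Ical(n) := \bigcap_{j=0}^{k(n)} \Ccal(\mu,m_j,n,\epsilon_{k(n)})$ has total $s$-weight greater than $1/2$, so is non-empty; pick $c_n \in \Ical(n)$. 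For the finitely many $n \in \bN$ with $n < N_0$, pick any $c_n \in \Ccal_n$ (non-empty since $\mu_n \ne 0$).

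Finally I would verify convergence. Fix $m_{k_0} \in \bN$; for all $n \ge N_{k_0}$ we have $k(n) \ge k_0$ and hence $c_n \in \Ccal(\mu,m_{k_0},n,\epsilon_{k(n)})$, so $d(\mu_{m_{k_0}},\xi_{n,m_{k_0}}(\tilc_n)) \le \epsilon_{k(n)} \to 0$; thus $\xi_{n,m_{k_0}}(\tilc_n) \to \mu_{m_{k_0}}$. For an arbitrary $m \bni$, pick $m_j \in \bN$ with $m_j > m$ and use $\xi_{n,m} = \xi_{m_j,m}\circ\xi_{n,m_j}$ together with the continuity of $\xi_{m_j,m}$ to deduce $\xi_{n,m}(\tilc_n) \to \mu_m$. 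Consequently $\lim_{n \to \infty}\tilc_n = \bar{\mu}$ in $\Deltainf$, so $\bar{\mu} \in \ovCcalinf$. The main obstacle is precisely the requirement that $c_n$ be simultaneously ``good'' at arbitrarily many scales with arbitrarily fine tolerance; the remedy is to let the tolerance $\epsilon_{k(n)}$ shrink with $n$ (rather than be fixed at each coordinate) while controlling the total bad-weight union across the relevant coordinates, which is possible because the bad weight at each single coordinate is eventually as small as we wish.
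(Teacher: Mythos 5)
Your proposal is correct and takes essentially the same route as the paper: the paper likewise extracts from Theorem~\ref{thm:an-ergodic-measure-is-from-circuits} that for each $m$ one can find, for all large $n \in \bN$, a single circuit $c_n \in \Ccal_n$ with $d(\mu_i,\xi_{n,i}(\tilc_n)) \le 1/m$ simultaneously for all $i \le m$, and then diagonalizes. Your explicit union bound over finitely many coordinates (using $\sum_{c \in \Ccal_n} s(c) = 1$) is precisely the detail the paper compresses into the phrase ``as a partial consequence'' of that theorem.
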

\begin{proof}
As a partial consequence of Theorem \ref{thm:an-ergodic-measure-is-from-circuits}, for all $m \bni$, there exists an $N(m) > m$ such that, for all $n$ with $N(m) \le n \in \bN$, there is some $c_{n} \in \Ccal_{n}$ such that $d(\mu_i,\xi_{n,i}(\tilc_{n})) \le 1/m$ for all $0 \le i \le m$.
Therefore, we can find $x \in \Ccalinf$ such that $\bar{x} = \bar{\mu}$.
\end{proof}

\begin{cor}\label{cor:ergodic-measures-are-from-circuits}
Let $\bN \subseteq \Nonne$ be an infinite subset.
Let $\Ccal$ be a system of circuits, enumerated by $\bN$, that expresses all ergodic measures.
Then, $\ovCcalinf$ contains all ergodic measures.
In particular, $\Ccalinf \nekuu$.
\end{cor}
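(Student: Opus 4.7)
The plan is to derive this corollary as a direct application of Theorem \ref{thm:ergodic-measures-are-from-essential-circuits}, together with the standard fact that the set of ergodic measures of a continuous map on a compact metric space is non-empty.

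First I would argue as follows: since $(X,f)$ is a continuous surjection on a non-empty compact metrizable space, by the Krylov--Bogolyubov theorem we have $\sP_f \noemp$, and then $\sP_f$ is compact, convex, and metrizable, so by the Krein--Milman theorem it has extreme points, which are exactly the ergodic invariant measures. Hence at least one ergodic measure exists. For any ergodic $\mu \in \sP_f$, the hypothesis says that $\Ccal$ expresses $\mu$, so Theorem \ref{thm:ergodic-measures-are-from-essential-circuits} applies and gives $\bar{\mu} \in \ovCcalinf$. Doing this for every ergodic $\mu$ yields the first conclusion, namely that $\ovCcalinf$ contains all ergodic measures (identified via $p_*$ with their images in $\Deltainf$).

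For the second conclusion $\Ccalinf \nekuu$, I would simply pick one ergodic measure $\mu$ (which exists by the argument above); then the preceding step produces an element $x \in \Ccalinf$ with $\bar{x} = \bar{\mu}$, so in particular $\Ccalinf$ is non-empty. There is no real obstacle here, since the theorem already does the substantive work; the only subtlety worth mentioning is that one must invoke existence of ergodic measures in order to obtain a non-empty conclusion, and this is where the hypotheses on $(X,f)$ (compact, metrizable, continuous surjection) are used implicitly through Krylov--Bogolyubov and Krein--Milman.
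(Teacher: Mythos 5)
Your proof is correct and matches the paper's (implicit) argument: the corollary is stated as an immediate consequence of Theorem \ref{thm:ergodic-measures-are-from-essential-circuits} applied to each ergodic measure, with $\Ccalinf \nekuu$ following from the existence of at least one ergodic measure. Your explicit appeal to Krylov--Bogolyubov and Krein--Milman just fills in the standard existence fact that the paper leaves unstated.
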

As a consequence, we obtain an expression for unique ergodicity by means of a property of the sequences of graph covers.
The next corollary should be an addition to theorems that state the relation between unique ergodicity and  uniform convergence.
\begin{cor}\label{cor:condition-unique-ergodicity}
For a 0-dimensional system $(X,f)$, the following are equivalent:
\enumb
\item\label{item:uniquely-ergodic} $(X,f)$ is uniquely ergodic,
\item\label{item:forall-unique-limit} for all systems of circuits enumerated by infinite sets that express all ergodic measures, $\ovCcalinf$ consists of a single element,
\item\label{item:exists-unique-limit} there exists a system of circuits enumerated by an infinite set that expresses all ergodic measures such that $\ovCcalinf$ consists of a single element, and
\item\label{item:uniform-convergence} for all circuits $c_n \in \sC_n~(n \bni)$, $\xi_{n,m}(\tilc_n)$ converges uniformly to an element of $\Delta_m$.
\enumn
\end{cor}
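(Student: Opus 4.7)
The plan is to close the cycle \ref{item:uniquely-ergodic}$\Rightarrow$\ref{item:forall-unique-limit}$\Rightarrow$\ref{item:exists-unique-limit}$\Rightarrow$\ref{item:uniquely-ergodic} and then establish the separate equivalence \ref{item:uniquely-ergodic}$\Leftrightarrow$\ref{item:uniform-convergence}, using only the machinery already assembled.

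For \ref{item:uniquely-ergodic}$\Rightarrow$\ref{item:forall-unique-limit}, unique ergodicity combined with Proposition \ref{prop:description-of-measures-by-covers} forces $\Deltainf = \seb \bar{\mu} \sen$ where $\mu$ is the unique invariant measure; any system $\Ccal$ expressing all ergodic measures satisfies $\ovCcalinf \subseteq \Deltainf$, and Theorem \ref{thm:ergodic-measures-are-from-essential-circuits} places $\bar{\mu}$ in $\ovCcalinf$, forcing equality. For \ref{item:forall-unique-limit}$\Rightarrow$\ref{item:exists-unique-limit}, I would exhibit the canonical system $\Ccal_n := \sC_n$: Proposition \ref{prop:non-negative-coef-for-circuits} guarantees that every invariant (hence every ergodic) measure is expressed by this $\Ccal$, so \ref{item:forall-unique-limit} applies and yields a singleton. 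For \ref{item:exists-unique-limit}$\Rightarrow$\ref{item:uniquely-ergodic}, if two distinct ergodic measures $\mu \neq \nu$ existed, Proposition \ref{prop:description-of-measures-by-covers} would give $\bar{\mu} \neq \bar{\nu}$ in $\Deltainf$ and Theorem \ref{thm:ergodic-measures-are-from-essential-circuits} would place both in $\ovCcalinf$, contradicting the assumption. Uniqueness of the ergodic measure then implies uniqueness of the invariant measure through ergodic decomposition, since the extreme points of $\sP_f$ are exactly the ergodic measures.

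For \ref{item:uniquely-ergodic}$\Rightarrow$\ref{item:uniform-convergence}, unique ergodicity gives $\xi_{\infty,m}(\Deltainf) = \seb \mu_m \sen$, so Lemma \ref{lem:from-infinity-to-n} says the non-increasing compact sets $\xi_{n,m}(\Delta_n)$ intersect in $\seb \mu_m \sen$, and a standard finite-intersection argument drives them into any $\epsilon$-ball around $\mu_m$ for large $n$. Since $\tilc_n \in \Delta_n$ for every $c_n \in \sC_n$, uniform convergence with common limit $\mu_m$ follows. For \ref{item:uniform-convergence}$\Rightarrow$\ref{item:uniquely-ergodic}, let $\eta_m \in \Delta_m$ be the common uniform limit. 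Using Notation \ref{nota:deltan} together with the proposition $\Pcal(G) = \Hull(\sC(G))$ stated just before Remark \ref{rem:coef-for-independent-circuits}, we have $\Delta_n = \Hull(\sC_n)$, so each point of $\xi_{n,m}(\Delta_n)$ is a convex combination of points $\xi_{n,m}(\tilc)$, $c \in \sC_n$, all within $\epsilon$ of $\eta_m$ for $n$ large; convexity of balls then traps $\xi_{n,m}(\Delta_n)$ inside $B(\eta_m, \epsilon)$. Lemma \ref{lem:from-infinity-to-n} now yields $\xi_{\infty,m}(\Deltainf) = \seb \eta_m \sen$ for every $m$, so $\Deltainf$, and hence $\sP_f$, is a singleton.

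The main potential obstacle is the step \ref{item:exists-unique-limit}$\Rightarrow$\ref{item:uniquely-ergodic}, which passes from uniqueness of the ergodic measure to uniqueness of the invariant measure via ergodic decomposition, a tool not otherwise invoked in the section. Everything else is a direct orchestration of Proposition \ref{prop:description-of-measures-by-covers}, Theorem \ref{thm:ergodic-measures-are-from-essential-circuits}, and Lemma \ref{lem:from-infinity-to-n}, the only substantive computation being the convex-hull argument connecting uniform convergence along circuits to uniform convergence along $\Delta_n$.
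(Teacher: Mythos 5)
Your proposal is correct. For the equivalence of \ref{item:uniquely-ergodic}, \ref{item:forall-unique-limit} and \ref{item:exists-unique-limit} you do exactly what the paper does (the paper compresses it to ``obvious from Corollary \ref{cor:ergodic-measures-are-from-circuits}''), merely spelling out the cycle via Proposition \ref{prop:description-of-measures-by-covers} and Theorem \ref{thm:ergodic-measures-are-from-essential-circuits}, and using the canonical choice $\Ccal_n = \sC_n$ to make \ref{item:exists-unique-limit} non-vacuous; your appeal to ergodic decomposition to pass from ``one ergodic measure'' to ``one invariant measure'' is standard (extreme points of the compact convex set $\sP_f$) and is implicitly used by the paper as well, so it is not a gap. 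Where you genuinely diverge is condition \ref{item:uniform-convergence}: the paper links it to \ref{item:forall-unique-limit}, proving \ref{item:forall-unique-limit}$\Rightarrow$\ref{item:uniform-convergence} by contradiction --- a non-uniform family of circuits yields, via the subsequence-extraction Lemma \ref{lem:subsequence-converge}, a second element of $\ovCcalinf$ --- whereas you link \ref{item:uniform-convergence} directly to \ref{item:uniquely-ergodic} through Lemma \ref{lem:from-infinity-to-n}: in one direction the nested compact sets $\xi_{n,m}(\Delta_n)$ shrink onto the singleton $\xi_{\infty,m}(\Deltainf)$, and in the other the identity $\Delta_n = \Hull(\sC_n)$ plus convexity of balls traps $\xi_{n,m}(\Delta_n)$ near the common limit, forcing $\Deltainf$ to be a singleton. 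Both routes are sound; the paper's is shorter, while yours is more quantitative (it controls $\diam(\xi_{n,m}(\Delta_n))$ directly, in the same spirit as the proof of Theorem \ref{thm:limited-combinatorics-unique-ergodicity}) and makes the role of the extreme points $\tilc$ of $\Delta_n$ explicit. One small point worth making explicit in your \ref{item:uniform-convergence}$\Rightarrow$\ref{item:uniquely-ergodic} step: that all circuit sequences share a \emph{common} limit $\eta_m$ follows by interleaving two sequences with putatively different limits, since the interleaved sequence must still converge under hypothesis \ref{item:uniform-convergence}.
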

\begin{proof}
The equivalence of \ref{item:uniquely-ergodic}, \ref{item:forall-unique-limit}, and \ref{item:exists-unique-limit} are obvious from Corollary \ref{cor:ergodic-measures-are-from-circuits}.
Clearly, \ref{item:uniform-convergence} implies \ref{item:forall-unique-limit}.
We show that \ref{item:forall-unique-limit} implies \ref{item:uniform-convergence}.
Let $\seb c_n \sen~(c_n \in \sC_n)$ be a sequence of circuits such that $\xi_{n,m}(\tilc_n)$ converges to some $\mu_m \in \Delta_m$ for all $m \bni$.
Suppose that the convergence is not uniform with respect to the selection of $\seb c_n \sen$.
Then, there exists an $m \bni$ such that we can find $\epsilon > 0$, an arbitrarily large $n > m$, and some $c_n \in \sC_n$ such that $d(\xi_{n,m}(\tilc_n), \mu_m) \ge \epsilon$.
The contradiction is now obvious by Lemma \ref{lem:subsequence-converge}.
\end{proof}
Let $\mu \in \sP_f$ and $\bar{\mu} = \pstrzinf{\mu} \in \Deltainf$.
In Notation \ref{nota:basis}, we have fixed a system of linearly independent bases $B_n = \seb c_{n,1}, c_{n,2}, \dotsc,c_{n,d_n} \sen \subseteq \sC_n$ of $\Ical(G_n)$ for each $n \bni$.
Thus, for each $n \bni$, there exists a unique expression $\mu_n = \sum_{1 \le i \le d_n}s(\mu)_{n,i}\tilc_{n,i}$ such that $\sum_{1 \le i \le d_n}s(\mu)_{n,i} = 1$.
In this context, we obtain the following expression of the above corollary.
\begin{cor}\label{cor:condition-unique-ergodicity-2}
The 0-dimensional system $(X,f)$ is uniquely ergodic if and only if there exists a sequence of real numbers $s_{m,i}$ $(m \bni,\ 1 \le i \le d_m)$ with $\sum_{1 \le i \le d_m}s_{m,i} =1$ such that:
\itemb
\item for every circuit $c_n \in \sC_n~(n \bni)$,
$\xi_{n,m}(\tilc_n)$ converges uniformly to $\sum_{1 \le i \le d_m}s_{m,i}\tilc_{m,i}$ as $n \to \infty$.
\itemn
\end{cor}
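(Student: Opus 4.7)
My plan is to derive the corollary essentially as a restatement of Corollary \ref{cor:condition-unique-ergodicity} in terms of the fixed basis $B_n$ from Notation \ref{nota:basis}, using the uniqueness of the basis expansion provided by Proposition \ref{prop:unique-expression}. The two directions are handled symmetrically.

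For the forward direction, I would assume $(X,f)$ is uniquely ergodic with unique invariant probability measure $\mu$, so that $\Deltainf = \{\bar{\mu}\}$ with $\bar{\mu} = \pstrzinf{\mu}$. By condition \ref{item:uniform-convergence} of Corollary \ref{cor:condition-unique-ergodicity}, for each $m \bni$ every sequence of circuits $c_n \in \sC_n$ satisfies $\xi_{n,m}(\tilc_n) \to \mu_m$ uniformly; indeed the only possible limit point in $\Delta_m$ is $\mu_m$ since $\Deltainf$ is a singleton. Now apply Proposition \ref{prop:unique-expression} to write $\mu_m = \sum_{1 \le i \le d_m} s_{m,i}\tilc_{m,i}$ in the fixed basis $B_m$. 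To see $\sum_i s_{m,i} = 1$, note that each $\tilc_{m,i} = (1/\Per(c_{m,i}))\sum_{e \in E(c_{m,i})}e$ has edge-coefficients summing to $1$, so summing edge-coefficients on both sides of the basis expansion gives $\sum_i s_{m,i} = \sum_{e \in E_m} \mu(U(e)) = 1$ (the last equality because $\mu_m \in \Delta_m = \Pcal(G_m)$).

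For the backward direction, I would assume such a sequence $s_{m,i}$ exists. Set $\nu_m := \sum_{1 \le i \le d_m}s_{m,i}\tilc_{m,i}$. Since each $\xi_{n,m}(\tilc_n) \in \Delta_m$ and $\Delta_m$ is closed, the uniform limit $\nu_m$ also lies in $\Delta_m$ (the non-negativity of edge coefficients is preserved, and the sum-to-one condition already holds by hypothesis; alternatively, $\nu_m \in \Ical(G_m)$ with edge-coefficients summing to $1$ by the same computation as above, and non-negativity comes from the limit). Hence condition \ref{item:uniform-convergence} of Corollary \ref{cor:condition-unique-ergodicity} is satisfied, which gives unique ergodicity.

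The main subtlety I expect to encounter is bookkeeping rather than a genuine obstacle: one must verify that $\nu_m$ really sits inside $\Delta_m$ rather than just in $\Ical(G_m)$ (i.e., that the edge-wise non-negativity is preserved under the uniform limit) and that the normalization $\sum_i s_{m,i} = 1$ corresponds precisely to the probability condition $\sum_{e \in E_m}\nu_m(e) = 1$ in the definition of $\Pcal(G_m)$. Both follow from a direct comparison of coefficients using the fact that $\tilc = (1/\Per(c))\sum_{e \in E(c)}e$, so this amounts to a straightforward reconciliation of normalizations rather than a substantive argument.
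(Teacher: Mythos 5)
Your proposal is correct and follows exactly the route the paper intends: the paper presents this corollary without a separate proof, as an immediate restatement of condition \ref{item:uniform-convergence} of Corollary \ref{cor:condition-unique-ergodicity} in the fixed basis $B_m$, using Proposition \ref{prop:unique-expression} for uniqueness of the expansion. Your bookkeeping of the normalization (each $\tilc$ has edge-coefficients summing to $1$, so $\sum_i s_{m,i}=1$ corresponds to membership in $\Pcal(G_m)$) and the closedness of $\Delta_m$ for the converse are exactly the details one needs.
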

%
%
%
%
\section{Linear dependence of circuits}\label{sec:linear-dependence}
%
%
%
In this section, we  discuss the conditions for the linear dependence of $\sC(G)$ for a graph $G$.
Let $G = (V,E)$ be a graph and $c_0 \in \sC(G)$.
A linear combination of circuits $\sum_{c \in \sC(G),~c \ne c_0}s(c)c$ is expressed simply as $\sum_{c \ne c_0}s(c)c$.
Suppose that there exists a linear combination
\[\sum_{1 \le i \le d} s(c_i)c_i = 0 \quad (s(c_i) \in \Real \myforall 1 \le i \le d). \]
Let $e$ be an edge of $c_i$ with $s(c_i) \ne 0$.
Then, there exists a $j \ne i$ such that $c_j$ contains $e$.
We show that the converse also holds.
\begin{thm}\label{thm:edge-linear}
Let $G = (V,E)$ be a graph and $c_0 \in \sC(G)$.
There exists a linear combination of circuits
\[c_0 = \sum_{c \ne c_0}s(c)c \quad (s(c) \in \Real \myforall c \ne c_0)\]
if and only if, for every edge $e$ of $c_0$, there exists a $c \in \sC(G)\setminus \seb c_0 \sen$ that contains $e$.
In this case, the coefficients $s(c)~(c \ne c_0)$ can be taken from the set of rational numbers.
\end{thm}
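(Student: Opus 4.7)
The forward ("only if") implication has already been observed immediately before the theorem: if $c_0 = \sum_{c\ne c_0}s(c)\,c$, then the coefficient of each $e\in E(c_0)$ on the right-hand side must equal $1$, forcing $e$ to lie in some $c\in \sC(G)\setminus\seb c_0\sen$. The substance of the theorem is the converse, which I plan to prove by constructing one explicit non-negative flow built from the hypothesis and then combining it with Proposition~\ref{prop:non-negative-coef-for-circuits} to force a contradiction.

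Assume that for every $e\in E(c_0)$ there is a circuit $c_e\in \sC(G)\setminus\seb c_0\sen$ containing $e$. Form the flow
\[
X := \sum_{e\in E(c_0)} c_e \ \in\ \Ical(G).
\]
For every $e'\in E(c_0)$ the edge-multiplicity satisfies $X(e')\ge 1$, because the choice $c_{e'}$ alone already contributes $1$. Hence $X-c_0$ still has non-negative edge-coefficients and satisfies the Kirchhoff condition at every vertex, so $X-c_0\in \Ical(G)$. Proposition~\ref{prop:non-negative-coef-for-circuits} then yields a decomposition
\[
X-c_0 \ =\ \sum_{d\in\sC(G)} s(d)\,d,\qquad s(d)\ge 0 \myforall d\in\sC(G).
\]

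Suppose for contradiction that $c_0\notin \mathrm{span}(\sC(G)\setminus\seb c_0\sen)$ inside $\Real^E$. Then there is a linear functional $\psi:\Real^E\to\Real$ with $\psi(c_0)=1$ and $\psi(c)=0$ for every $c\in\sC(G)\setminus\seb c_0\sen$. Applying $\psi$ to the defining expression of $X$ gives $\psi(X)=\sum_{e\in E(c_0)}\psi(c_e)=0$, whereas applying it to $X=c_0+\sum_d s(d)\,d$ gives $\psi(X)=\psi(c_0)+\sum_d s(d)\psi(d)=1+s(c_0)$. Thus $s(c_0)=-1$, contradicting $s(c_0)\ge 0$. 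Hence $c_0$ lies in the span of $\sC(G)\setminus\seb c_0\sen$.

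Rationality of the coefficients is then automatic: writing $c_0=\sum_{c\ne c_0}s(c)\,c$ is a linear system in the unknowns $s(c)$ whose augmented matrix has only entries in $\seb 0,1\sen$, so solvability over $\Real$ forces solvability over $\Q$. The only point requiring genuine attention is verifying that $X-c_0$ lies in $\Ical(G)$ with non-negative coefficients so that Proposition~\ref{prop:non-negative-coef-for-circuits} can be invoked; but this is precisely where the covering hypothesis enters, and it is already secured by the choice $X(e')\ge 1$ on $E(c_0)$.
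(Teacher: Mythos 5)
Your proof is correct, and it takes a genuinely different route from the paper's. The paper proves the converse by an explicit combinatorial construction: it selects circuits $c_1,\dotsc,c_n$ whose maximal common paths with $c_0$ successively overlap and wrap once around $c_0$, rearranges the sum $\sum_i c_i$ into $(\text{a cycle}) + c_0$, and then decomposes the leftover cycle into circuits and divides by the positive integral coefficient of $c_0$ to obtain the rational expression. Your argument replaces all of that bookkeeping by a single application of Proposition~\ref{prop:non-negative-coef-for-circuits}: the flow $X=\sum_{e\in E(c_0)}c_e$ dominates $c_0$ edgewise on $E(c_0)$, so $X-c_0\in\Ical(G)$ and decomposes into circuits with non-negative coefficients, and comparing the two expressions for $X$ under a functional annihilating $\sC(G)\setminus\seb c_0 \sen$ forces $c_0$ into their span. (You could even skip the contradiction: from $X=c_0+\sum_{d}s(d)d$ and $X=\sum_{e}c_e$ one gets $(1+s(c_0))c_0=\sum_{e}c_e-\sum_{d\ne c_0}s(d)d$ and divides by $1+s(c_0)\ge 1$, which yields the combination directly and, combined with your observation that the system has a $\{0,1\}$-valued augmented matrix, the rationality as well.) Your separate rationality argument via solvability over $\Q$ of a rational linear system is valid. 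What the paper's proof buys is an explicit description of which circuits appear and with what signs; what yours buys is brevity and the reuse of the already-established circuit decomposition of $\Ical(G)$.
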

\begin{proof}
From the remark above, if the equation in Theorem~\ref{thm:edge-linear} holds, then each edge of $c_0$ is contained in a circuit $c \ne c_0$.
We have to show the converse.
Suppose that every edge of $c_0$ is contained by another circuit.
Let $\sC(c_0)$ be the set of all circuits that have common edges with $c_0$, and
let $c \in \sC(c_0)$.\\
Notation: Let $P$ be a subset of the set of all paths of $G$, and $p \in P$.
We say $p$ is {\it maximal} in $P$ if there exists no path $q \in P$ such that $p$ is a proper sub-path of $q$.\\
Let $P(c_0,c) := \seb p ~|~p \text{ is a maximal common path of } c_0 \myand c \sen$, 
 $P(c_0) := \bigcup_{c \in \sC(c_0)}P(c_0,c)$,
and $\Pmax(c_0) := \seb p \in P(c_0)~|~p \text{ is maximal in } P(c_0) \sen$.\\
\noindent Notation: For a circuit $c$ and vertices $a \ne b$ on it, the path on $c$ from $a$ to $b$ is denoted by $p(c;a,b)$.
For mutually distinct vertices $x_1,x_2,\dotsc,x_k$ of $c_0$, we use the expression $x_1 \le  x_2 \le \dotsb \le x_k$ in the following sense:
\itemb
\item $p(c_0;x_i,x_{i+2})$ contains $x_{i+1}$ for all $(1 \le i \le n-2)$, i.e., we check the order for every three steps.
\itemn
In the above notation, if $x \le y$ and $x \ne y$, then we write $x < y$.
We claim that, for every $2 \le n$, there exists a sequence of paths $\seb p_1,p_2,\dotsc,p_n \sen \subseteq \Pmax(c_0)$ with $p_i = p(c_0;a_i,b_i)$ for $1 \le i \le n$ such that the following is satisfied:
\itemb
\item $a_i < a_{i+1} \le b_i < b_{i+1}$ for all $1 \le i < n$.
\itemn
To show this by induction, take an arbitrary $p_1 \in \Pmax(c_0)$.
Assume that $p_{k}$ is determined for $1 \le k$, and
let $e$ be the edge of $c_0$ with $i(e) = b_k$.
We take some $p_{k+1} \in \Pmax(c_0)$ that contains $e$.
Because $p_k$ is maximal, it follows that $a_k < a_{k+1} \le b_k < b_{k+1}$.
The induction is complete.
There exists the minimal $n \ge 2$ such that $p(c_0;a_n,b_n)$ contains $a_1$.
We fix such an $n \ge 2$.
Note that $a_n < a_1 \le b_n$.
Because $p(c_0;a_1,b_1)$ is maximal, it follows that $a_1 \le b_n < b_1$.
For each $1 \le i \le n$, there exists a circuit $c_i$ such that one of the maximal common paths with $c_0$ is $p_i$.
In the next calculation of multiple walks, if we encounter an expression $p(c;v,v)$ for a circuit $c$ and a vertex $v$, then we just delete it from the list.

\begin{equation*}
\begin{array}{llll}
\sum_{1 \le i \le n}c_i &=& \quad p(c_0;a_1,b_n) +p(c_0;b_n,b_1) + p(c_1;b_1,a_1) &\ \text{decomposing } c_1\\
 & &\   +p(c_0;a_2,b_1) +p(c_0;b_1,b_2) + p(c_2;b_2,a_2) &\ \text{decomposing } c_2\\
 & &\   \qquad \vdots \\
 & &\   +p(c_0;a_n,b_{n-1}) +p(c_0;b_{n-1},b_n) + p(c_n;b_n,a_n) & \ \text{decomposing } c_n \\
 & & \\
 & =&   \quad p(c_0;a_1,b_n) & \\ 
 & &\   +p(c_0;a_n,b_{n-1}) +p(c_0;b_{n-1},b_n) + p(c_n;b_n,a_n) & \ \text{decomposing } c_n\\
 & &\   +p(c_0;a_{n-1},b_{n-2}) +p(c_0;b_{n-2},b_{n-1}) + p(c_{n-1};b_{n-1},a_{n-1}) & \ \text{decomposing } c_{n-1}\\
 & &\    \qquad \vdots \\
 & &\   +p(c_0;a_2,b_1) +p(c_0;b_1,b_2) + p(c_2;b_2,a_2) &\ \text{decomposing } c_2\\
 & &\   +p(c_0;b_n,b_1) + p(c_1;b_1,a_1) & \\ 
\end{array}
\end{equation*}
\begin{equation*}
\begin{array}{lllll}
\phantom{\sum_{1 \le i \le n}c_i}& =&   \quad p(c_0;a_1,b_n) & & \\ 
 & &\   +p(c_n;b_n,a_n) +p(c_0;a_n,b_{n-1})  & +p(c_0;b_{n-1},b_n)  & \ \text{decomposing } c_n\\
 & &\   + p(c_{n-1};b_{n-1},a_{n-1}) + p(c_0;a_{n-1},b_{n-2}) & +p(c_0;b_{n-2},b_{n-1})  & \ \text{decomposing } c_{n-1}\\
 & &\    \qquad \vdots & \qquad \vdots\\
 & &\   +p(c_2;b_2,a_2)+p(c_0;a_2,b_1) & +p(c_0;b_1,b_2)   &\ \text{decomposing } c_2\\
 & &\   +p(c_1;b_1,a_1) & +p(c_0;b_n,b_1)   & \ 
\end{array}
\end{equation*}
In the final expression, the consecutive treading of paths in the left-hand side forms a walk that begins and ends at $a_1$.
Therefore, it is a cycle.
On the other hand, the totality of the right-hand side equals $c_0$.
Therefore, we have that $\sum_{1 \le i \le n}c_i = (\text{a cycle}) + c_0$.
Generally, a cycle can be decomposed into a sum of circuits.
Thus, the coefficient of $c_0$ on the right-hand side is positive.
This concludes the proof.
\end{proof}
The following is a direct consequence of Theorem \ref{thm:edge-linear}.
\begin{thm}\label{thm:linear-independence}
Let $G = (V,E)$ be a graph.
Then, $\sC(G)$ is linearly independent if and only if, for every circuit $c \in \sC(G)$, there exists an edge $e$ of $c$ such that no other circuits have $e$ as an edge.
\end{thm}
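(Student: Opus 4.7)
The plan is to derive this theorem directly from Theorem~\ref{thm:edge-linear} by a logical equivalence, since the condition ``$c_0$ is a linear combination of other circuits'' is precisely the building block of linear dependence.

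For the ``if'' direction, I would argue by contrapositive. Suppose $\sC(G)$ is linearly dependent. Then there exists a non-trivial relation $\sum_{i} s(c_i)c_i = 0$ with some coefficient $s(c_0) \ne 0$. Rearranging gives
\[
c_0 = \sum_{c \ne c_0}\left(-\frac{s(c)}{s(c_0)}\right) c,
\]
so $c_0$ is a linear combination of other circuits. By the ``only if'' direction of Theorem~\ref{thm:edge-linear} (the easy observation made immediately before its statement), every edge of $c_0$ must then be contained in some circuit $c \ne c_0$ with nonzero coefficient. This contradicts the hypothesis that $c_0$ has an edge belonging to no other circuit.

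For the ``only if'' direction, suppose that every edge of some circuit $c_0 \in \sC(G)$ is contained in at least one circuit $c \ne c_0$. Then by the ``if'' direction of Theorem~\ref{thm:edge-linear}, there exist real (in fact rational) coefficients $s(c)$ such that
\[
c_0 = \sum_{c \ne c_0} s(c) c,
\]
which yields a non-trivial linear relation among elements of $\sC(G)$, contradicting linear independence.

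The argument is entirely a repackaging of Theorem~\ref{thm:edge-linear}, so there is no real obstacle; the only care required is in stating the quantifiers precisely (``for every circuit $c$, there exists an edge $e$ of $c$ not contained in any other circuit'' is the negation of the condition appearing in Theorem~\ref{thm:edge-linear} applied to each $c_0$). Hence both directions follow at once, and the proof is essentially a one-line deduction from the previous theorem.
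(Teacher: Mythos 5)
Your proposal is correct and matches the paper, which states Theorem~\ref{thm:linear-independence} as a direct consequence of Theorem~\ref{thm:edge-linear} without further argument; your two contrapositive deductions (using the easy observation preceding Theorem~\ref{thm:edge-linear} for one direction and its nontrivial implication for the other) are exactly the intended reasoning.
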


\begin{lem}\label{lem:circuit-isnot-positive-linear-combination1}
Let $G = (V,E)$ be a graph and $c_0 \in \sC(G)$.
Suppose that there exists a linear combination $c_0 = \sum_{c \neq c_0}s(c)c$.
Then, there exists a $c \neq c_0$ such that $s(c) < 0$.
\end{lem}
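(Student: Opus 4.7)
The plan is to argue by contradiction: suppose that $s(c) \geq 0$ for all $c \neq c_0$ in the given decomposition, and derive that some $c \neq c_0$ must coincide with $c_0$. The argument will proceed edge-by-edge in $\Mcal(G)$, exploiting the fact that in the circuit graph $c_0$ every vertex has exactly one outgoing edge among $E(c_0)$.

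First I would unpack the equation $c_0 = \sum_{c \ne c_0} s(c) c$ in the free module $\Mcal(G)$: comparing coefficients of each edge $e \in E$ yields
\[
\mathbf{1}[e \in E(c_0)] \;=\; \sum_{c \ne c_0,\ e \in E(c)} s(c).
\]
For each edge $e \notin E(c_0)$, the right-hand side is $0$, and under the non-negativity assumption this forces $s(c) = 0$ for every $c$ containing such an $e$. Consequently, any $c \ne c_0$ with $s(c) > 0$ must satisfy $E(c) \subseteq E(c_0)$. On the other hand, picking any $e_0 \in E(c_0)$, the equation reads $1 = \sum_{c \ne c_0,\ e_0 \in E(c)} s(c)$, so such a $c$ with $s(c) > 0$ actually exists.

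The key combinatorial step is then to show that the only circuit graph $c \in \sC(G)$ with $E(c) \subseteq E(c_0)$ is $c_0$ itself. Writing $c_0 = \pstrz{v}{l}$ with $v_0 = v_l$ and all intermediate vertices distinct, I observe that in the subgraph $(V(c_0), E(c_0))$ every vertex has out-degree exactly $1$. So starting from any vertex $u_0$ of a hypothetical circuit $c$ with $E(c) \subseteq E(c_0)$, the successor $u_1$ is forced, then $u_2$, and so on; following the unique successor in $c_0$ we must traverse every vertex of $c_0$ before returning to $u_0$. Thus $c$ uses all edges of $c_0$, so $c = c_0$ as circuit graphs, contradicting $c \ne c_0$.

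I do not anticipate a serious obstacle: the proof is essentially an exercise in reading off coefficients and invoking the one-out-neighbor property of a directed cycle. The only subtlety to handle carefully is distinguishing circuits as sequences (where starting vertex could differ) from circuit graphs (elements of $\sC(G)$), which is how the sum is indexed in the statement; once that convention is fixed, the uniqueness of the circuit graph supported on $E(c_0)$ is immediate.
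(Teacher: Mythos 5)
Your proof is correct and rests on the same two ingredients as the paper's: edge-by-edge coefficient comparison, and the fact that a circuit other than $c_0$ cannot have all its edges inside $E(c_0)$ (the paper phrases this as ``$c$ must leave $c_0$ at some vertex,'' then notes the leaving edge must be cancelled by a negatively weighted circuit, whereas you run the contrapositive as a contradiction argument). The difference is purely organizational; your write-up of the out-degree-one step is, if anything, more explicit than the paper's.
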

\begin{proof}
Suppose that the equation $c_0 = \sum_{c \neq c_0}s(c)c$ is satisfied.
Then, there exists a circuit $c \neq c_0$ with $s(c) > 0$ that has a common edge with $c_0$.
It follows that $c$ leaves $c_0$ at some vertex $v$ with an edge $e'$ of $c$, i.e., $i(e') = v$ and $e'$ is not an edge of $c_0$.
Because we have to subtract some circuit that has an edge $e'$ to obtain $c_0$, we reach the conclusion.
\end{proof}

\begin{lem}\label{lem:notallpositive}
Let $G = (V,E)$ be a graph and $c_0 \in \sC(G)$.
Then, there does not exist any linear combination $c_0 = \sum_{c \neq c_0}s(c)c$ such that $s(c) \ge 0$ for all $c \neq c_0$.
\end{lem}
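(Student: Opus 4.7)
The plan is to observe that this statement is essentially the contrapositive of Lemma \ref{lem:circuit-isnot-positive-linear-combination1}, so in principle a one-line appeal to that lemma would suffice. However, a direct edge-by-edge argument is cleaner than the path-threading machinery used inside Theorem \ref{thm:edge-linear}, and that is how I would write the proof.

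I would argue by contradiction: suppose nonnegative reals $s(c) \ge 0$ $(c \in \sC(G), c \ne c_0)$ satisfy $c_0 = \sum_{c \ne c_0} s(c) c$. The first step is to read off edge-coefficients. For any edge $e' \in E \setminus E(c_0)$, the coefficient of $e'$ on the left is $0$, while on the right it equals $\sum_{c \ne c_0,\ e' \in E(c)} s(c)$. Because each summand is nonnegative and the total is $0$, every $c \ne c_0$ that uses at least one edge outside $E(c_0)$ must have $s(c) = 0$. In other words, only circuits $c$ with $E(c) \subseteq E(c_0)$ can contribute to the sum with positive weight.

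The second step is the structural observation that the only circuit in $\sC(G)$ with $E(c) \subseteq E(c_0)$ is $c_0$ itself. Indeed, a circuit has as many edges as vertices, and $c_0$ is a closed walk on $\mathrm{Per}(c_0)$ distinct vertices. Deleting any single edge from $c_0$ leaves a simple path, so no proper nonempty subset of $E(c_0)$ supports a closed walk; hence $E(c) \subseteq E(c_0)$ forces $E(c) = E(c_0)$, and since a circuit graph is determined by its edge set, $c = c_0$.

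Combining the two steps, every $c \ne c_0$ in the sum has $s(c) = 0$, so the right-hand side is the zero formal combination while the left-hand side is $c_0 \ne 0$, a contradiction. There is no real obstacle here; the only point that needs a sentence of justification is the ``no proper sub-circuit of a circuit'' fact, which is immediate from the definition.
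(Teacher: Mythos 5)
Your proof is correct, and it takes a more self-contained route than the paper. The paper disposes of this lemma in one line by citing Lemma \ref{lem:circuit-isnot-positive-linear-combination1} (of which the present statement is essentially the contrapositive), and the proof of that lemma is itself a short, somewhat informal ``leaving-edge'' argument: a positively weighted circuit $c\ne c_0$ sharing an edge with $c_0$ must leave $c_0$ along some edge $e'\notin E(c_0)$, and since the coefficient of $e'$ on the left is $0$, some other circuit must carry a negative weight on $e'$. Your argument isolates exactly the same mechanism --- comparing coefficients of edges outside $E(c_0)$ --- but runs it directly under the nonnegativity hypothesis, where it immediately forces $s(c)=0$ for every $c$ using such an edge; you then close the loop with the observation that a circuit graph has no proper sub-circuit (each vertex of $c_0$ has a unique outgoing edge within $E(c_0)$, so a closed walk inside $E(c_0)$ must traverse all of $c_0$), so that no circuit $c\ne c_0$ survives and the right-hand side collapses to $0\ne c_0$. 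The two sub-circuit/leaving-edge facts you and the paper use are the same underlying structural point; what your version buys is that it is fully rigorous on its own, without importing the intermediate lemma or its slightly hand-wavy ``we have to subtract some circuit'' step, at the cost of a few extra lines. Both are valid; yours would serve equally well as a replacement proof of Lemma \ref{lem:circuit-isnot-positive-linear-combination1} restricted to the nonnegative case.
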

\begin{proof}
This is obvious from Lemma \ref{lem:circuit-isnot-positive-linear-combination1}.
\end{proof}
\begin{prop}\label{prop:every-circuit-is-extremal}
Let $G = (V,E)$ be a graph.
Then, for every $c \in \sC(G)$, $\tilc$ is an extremal point of $\Pcal(G)$.
\end{prop}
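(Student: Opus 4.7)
The plan is to show extremality directly: if $\tilc_0 = (1-t)y + tz$ with $y,z \in \Pcal(G)$ and $0 < t < 1$, then $y = z = \tilc_0$. First, I would use $\Pcal(G) = \Hull(\sC(G))$ to expand $y = \sum_{c \in \sC(G)}a(c)\tilc$ and $z = \sum_{c \in \sC(G)}b(c)\tilc$, each with nonnegative coefficients summing to $1$. Setting $s(c) := (1-t)a(c) + tb(c)$, one gets $\tilc_0 = \sum_c s(c)\tilc$ with $s(c) \ge 0$ and $\sum_c s(c) = 1$, so in particular $s(c_0) \le 1$.

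Next I would clear the normalization by multiplying through by $\Per(c_0)$: since $\tilc = c/\Per(c)$, the identity becomes $c_0 = \sum_c r(c)c$ in $\Mcal(G)$, where $r(c) := s(c)\Per(c_0)/\Per(c)$. Note that $r(c_0) = s(c_0)$, and all $r(c) \ge 0$. The key dichotomy is whether $r(c_0) = 1$ or $r(c_0) < 1$.

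If $r(c_0) < 1$, I would rearrange to obtain
\[
c_0 = \sum_{c \ne c_0} \frac{r(c)}{1 - r(c_0)}\, c,
\]
which is a linear combination of circuits other than $c_0$ with nonnegative coefficients; this directly contradicts Lemma~\ref{lem:notallpositive}. Hence $r(c_0) = s(c_0) = 1$, forcing $s(c) = 0$ for every $c \ne c_0$. Since $s(c) = (1-t)a(c) + tb(c)$ with $a(c),b(c) \ge 0$ and $t \in (0,1)$, this forces $a(c) = b(c) = 0$ for $c \ne c_0$, so $a(c_0) = b(c_0) = 1$ and thus $y = z = \tilc_0$.

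The main obstacle is conceptual rather than computational: keeping straight the two different normalizations in use — the convex-combination normalization $\sum s(c) = 1$ inside $\Hull(\sC(G))$ versus the ``coefficient of $c_0$ equals $1$'' condition needed to invoke Lemma~\ref{lem:notallpositive} — and seeing that the period factors cancel in the one place that matters, namely at $c_0$ itself, so that $r(c_0) = s(c_0)$. Once this identification is made, the contradiction with Lemma~\ref{lem:notallpositive} is immediate.
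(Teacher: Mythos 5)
Your proof is correct and takes essentially the same route as the paper, whose entire proof is the remark that the statement is ``obvious from Lemma \ref{lem:notallpositive}''; your argument is exactly the expansion of that remark, and the normalization bookkeeping (the cancellation giving $r(c_0)=s(c_0)$, the bound $s(c_0)\le 1$, and the dichotomy $r(c_0)<1$ versus $r(c_0)=1$) is handled correctly.
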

\begin{proof}
This is obvious from Lemma \ref{lem:notallpositive}.
\end{proof}

\begin{example}\label{example:circuits-tree}
In the case of graphs considered by Gambaudo and Martens \cite{GM}, the sets of all circuits are linearly independent.
We examine another class of directed graphs for which the sets of all circuits are again linearly independent.
Nevertheless, we do not know whether such graphs have covers that generate all minimal Cantor systems.
Roughly speaking, we consider graphs whose circuits form trees.
Let $G$ be a directed graph and $\sC$ be the set of all circuits of $G$.
We assume that if $c$ and $c'$ are distinct elements of $\sC$, then $\sharp(V(c) \cap V(c')) \le 1$.
We say a vertex $v \in V(c) \cap V(c')$ is a connecting vertex if it exists.
We consider an undirected graph such that the set of vertices is $\sC$ and the set of edges is $\seb \seb c,c'\sen~|~c \ne c' \myand \sharp(V(c) \cap V(c')) = 1 \sen$.
We assume that this undirected graph is connected and has no cycle, i.e., it is a tree.
Then, there exists a cycle in $G$ that passes all the edges of $G$ exactly once and all the connecting vertices exactly twice.
Let $\seb s(c) \sen_{c \in \sC}$ be an arbitrary set of positive integers.
Then, for a circuit graph $C$ with a suitable period, there exists a homomorphism $\fai : C \to G$ such that $\fai_*(C) = \sum_{c \in \sC}s(c)c$.
Even if an initial edge in $C$ is fixed, the map $\fai$ can be taken such that the initial edge is mapped to an arbitrary edge of $G$.
Let $G'$ be another directed graph of this type, and $\sC'$ be the set of all circuits of $G'$.
Let us choose the system of positive integers $\seb m_{c',c} \sen_{c' \in \sC, c \in \sC}$ arbitrarily.
If we modify the path lengths of all circuits of $G'$, we can construct a map $\fai : G' \to G$ such that $\fai_*(c') = \sum_{c \in \sC}m_{c',c}c$ for all $c' \in \sC'$.
Note that even if a circuit $c'$ of $G'$ winds round a circuit $c$ of $G$ only once, $\fai^{-1}(c)$ may consist of fragmented paths.
Let $\seb d_n \sen_{n \bni}$ be an arbitrary sequence of positive integers and $M_n~(n \bni)$ be an arbitrary system of $(d_{n+1},d_n)$-matrices such that each entry is a  positive integer.
By the above argument, we can find a sequence of graph covers of this type that generate this system of `winding' matrices with positive integer entries.
\end{example}
%
%
%
%
\section{Finite ergodicity and bounded combinatorics}\label{sec:finite-and-bounded}
In this section, we ensure that the argument of Gambaudo and Martens \cite[Proposition 3.3]{GM} is still valid in our case, with an extension to the somewhat unbounded case for \cite[Proposition 3.3 (b)]{GM}.
Let $\Gcal$ be a sequence $G_0 \getsby{\fai_0} G_1 \getsby{\fai_1} G_2 \getsby{\fai_2} \dotsb$ of covers.
We use the notation of \usualnotationss given in \S \ref{sec:preliminaries}, particularly $(X,f) = G_{\infty}$.

\begin{prop}\label{prop:countably-distinct-ergodic-measures-from-circuits}
Let $\bN$ be an infinite subset of $\Nonne$.
Let $\Ccal$ be a system of circuits enumerated by $\bN$ that generates all measures.
Suppose that $\sP_f$ has at least $k~(1 \le k \le \infty)$ ergodic measures.
Then, there exist $k$ sequences $\seb c(n,j)~|~ n \in \bN,~c(n,j) \in \Ccal_{n} \sen$ with $0 \le j < k$ such that $x_j = \lim_{n \to \infty}\tilc(n,j) ~(0 \le j < k)$ are mutually distinct ergodic measures.
\end{prop}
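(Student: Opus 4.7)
The plan is to derive the statement as a straightforward corollary of Theorem \ref{thm:ergodic-measures-are-from-essential-circuits}, applied once per ergodic measure, together with the identification $p_* : \sP_f \to \Deltainf$ of Proposition \ref{prop:description-of-measures-by-covers}. The whole proof is essentially a relabeling step; there is no hidden technical difficulty.

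First I would invoke the hypothesis to choose $k$ mutually distinct ergodic measures $\seb \mu_j \sen_{0 \le j < k} \subseteq \sP_f$. For each such $j$, the system $\Ccal$ expresses $\mu_j$ (since it expresses all measures), so Theorem \ref{thm:ergodic-measures-are-from-essential-circuits} applies and yields $\bar{\mu}_j \in \ovCcalinf$. By the very definition of $\ovCcalinf$, there exists $y_j \in \Ccalinf$ with $\bar{y}_j = \bar{\mu}_j$; writing $y_j = \seb c(n,j) \sen_{n \in \bN}$ with $c(n,j) \in \Ccal_n$, I obtain
\[
x_j := \lim_{n \to \infty} \tilc(n,j) = \bar{\mu}_j \in \Deltainf.
\]
This produces the $k$ sequences of circuits required by the statement.

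Next I would check that the resulting $x_j$'s are mutually distinct ergodic measures. Since $\mu_0,\mu_1,\dotsc$ were chosen to be mutually distinct elements of $\sP_f$, Proposition \ref{prop:description-of-measures-by-covers} — which says $p_* : \sP_f \to \Deltainf$ is an isomorphism — forces $\bar{\mu}_0,\bar{\mu}_1,\dotsc$ to be mutually distinct in $\Deltainf$. Identifying $\Deltainf$ with $\sP_f$ through $p_*^{-1}$, the points $x_j = \bar{\mu}_j$ are precisely the original ergodic measures $\mu_j$, so they are ergodic and pairwise distinct.

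If there were to be any difficulty, it would lie in the case $k = \infty$, but this causes no trouble: the construction is performed independently for each index, so a countable enumeration of ergodic measures (always available when infinitely many exist) is enough, and no diagonal argument or extra compactness input is needed. In short, the proposition is a packaging of Theorem \ref{thm:ergodic-measures-are-from-essential-circuits} across an indexing of the ergodic measures, and the $p_*$-isomorphism handles the bookkeeping of distinctness and ergodicity.
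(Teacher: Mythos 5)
Your proposal is correct and follows essentially the same route as the paper: the paper's proof simply iterates Corollary \ref{cor:ergodic-measures-are-from-circuits} (which is itself the packaging of Theorem \ref{thm:ergodic-measures-are-from-essential-circuits} that you invoke directly) once per ergodic measure, with distinctness of the limits following from the injectivity of $p_*$. Your treatment of the $k=\infty$ case and the explicit appeal to Proposition \ref{prop:description-of-measures-by-covers} just make explicit what the paper leaves implicit.
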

\begin{proof}
Let $\mu_0$ be an ergodic measure.
By Corollary \ref{cor:ergodic-measures-are-from-circuits}, there exists a sequence $\seb c(n,0) \sen_{n \in \bN}$ such that $\bar{\mu}_0 = \lim_{n \to \infty} \tilc(n,0)$.
Suppose that there exists another ergodic measure $\mu_1$.
By Corollary \ref{cor:ergodic-measures-are-from-circuits}, there exists a sequence $\seb c(n,1) \sen_{n \in \bN} \in \Ccalinf$ such that $\bar{\mu}_1 = \lim_{n \to \infty} \tilc(n,1)$.
In this way, we can construct $c(n,j)$ for $0 \le j < k$ and $n \in \bN$.
\end{proof}

We want to check \cite[Proposition 3.3 (a)]{GM} for our case.

\begin{thm}\label{thm:finite-ergodicity}
Let $\bN$ be an infinite subset of $\Nonne$.
Let $\Ccal$ be a system of circuits that expresses all measures enumerated by $\bN$ and $k \bpi$.
Suppose that $\sharp \Ccal_n \le k$ for all $n \in \bN$, and that there exist mutually distinct circuits $c'(n,i)~(1 \le i \le l)$ in $\Ccal_n$ such that $\tilc'(n,i)$ converges to a non-ergodic measure for each $1 \le i \le l$.
Then, $\sP_f$ has at most $k - l$ ergodic measures.
\end{thm}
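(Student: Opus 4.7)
The plan is to argue by contradiction, assuming $\sP_f$ has at least $k-l+1$ ergodic measures, and then produce a collection of $k+1$ circuit-sequences in $\Ccal_n$ which, for large $n \in \bN$, cannot all be distinct by pigeonhole --- yet every forced coincidence will contradict either mutual distinctness of ergodic limits, mutual distinctness of the $c'(n,i)$, or the incompatibility of an ergodic limit with a non-ergodic limit.

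First I would invoke Proposition \ref{prop:countably-distinct-ergodic-measures-from-circuits} to obtain $k-l+1$ sequences $\{c(n,j)\}_{n \in \bN}$ with $c(n,j) \in \Ccal_n$ ($0 \le j \le k-l$), such that the limits $x_j = \lim_{n \to \infty} \tilc(n,j) \in \Deltainf$ are mutually distinct ergodic measures. Together with the hypothesised sequences $\{c'(n,i)\}_{n \in \bN}$ for $1 \le i \le l$, I then have $k+1$ sequences of circuits drawn from $\Ccal_n$. For each fixed $n \in \bN$ the assignment sending an index in the disjoint union $\{0,\dotsc,k-l\} \sqcup \{1,\dotsc,l\}$ to the corresponding circuit in $\Ccal_n$ cannot be injective, since $\sharp \Ccal_n \le k < k+1$. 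Thus for every $n \in \bN$ at least one pair of distinct indices collides.

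Since there are only finitely many such pairs, a standard pigeonhole argument produces an infinite subset $\bN' \subseteq \bN$ on which the collision happens in a fixed pair of indices. I would then split into three cases. If both indices lie in $\{0,\dotsc,k-l\}$, say $c(n,j_1) = c(n,j_2)$ on $\bN'$, then $\tilc(n,j_1) = \tilc(n,j_2)$ on $\bN'$, and since both original sequences converge in $\Deltainf$, their limits along the subsequence $\bN'$ must agree, yielding $x_{j_1} = x_{j_2}$ --- contradicting the choice of the ergodic measures. If one index lies in each side, then on $\bN'$ some $\tilc(n,j)$ equals some $\tilc'(n,i)$, forcing the ergodic measure $x_j$ to coincide with the non-ergodic limit of $\tilc'(n,i)$, which is absurd. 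If both indices lie in the $c'$ part, then two of the $c'(n,i)$ coincide at every $n \in \bN'$, directly contradicting the hypothesis that the $c'(n,i)$ are mutually distinct in $\Ccal_n$.

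No step is really an obstacle; the only point that needs a small check is that equality of two circuits on an infinite subsequence forces equality of their limits in $\Deltainf$, which is immediate from the product topology on $\Deltainf$ and the fact that $\tilc(n,j_1)$ and $\tilc(n,j_2)$ project to identical elements of each $\Delta_m$ as soon as the circuits coincide. The rest is bookkeeping with pigeonhole.
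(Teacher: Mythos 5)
Your proposal is correct and follows essentially the same route as the paper: invoke Proposition \ref{prop:countably-distinct-ergodic-measures-from-circuits} to get $k-l+1$ circuit-sequences converging to distinct ergodic measures, adjoin the $l$ sequences $c'(n,i)$, and use pigeonhole on the $k+1$ sequences versus $\sharp\Ccal_n \le k$ to force a persistent collision, each case of which yields a contradiction. Your explicit handling of the third case (two $c'(n,i)$ colliding) and the remark about limits along a subsequence are minor elaborations of the paper's argument, not a different method.
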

\begin{proof}
Suppose that there exist $k-l+1$ ergodic measures $\mu_j$ $(1 \le j \le k-l+1)$.
Then, by Proposition \ref{prop:countably-distinct-ergodic-measures-from-circuits}, there exist sequences $\seb c(n,j) \sen_{n \in \bN}$ with $c(n,j) \in \Ccal_n$ for each $1 \le j \le k-l+1$ such that $\bar{\mu}_j = \lim_{n \to \infty} \tilc(n,j)$ for each $1 \le j \le k+1$.
Let $n \in \bN$.
Because $\seb c(n,j)~|~1 \le j \le k-l+1 \sen \cup \seb c'(n,i)~|~1 \le i \le l \sen \subseteq \Ccal_n$, one of the following cases occurs:
\enumb
\item there exists a pair $j < j'$ such that $c(n,j) = c(n,j')$, or
\item there exists a pair $j$ and $i$ such that $c(n,j) = c'(n,i)$.
\enumn
Obviously, the number of such combinations is bounded.
Therefore, there exists a pair $j_0 < j'_0$ such that $c(n,j_0) = c(n,j'_0)$ for an infinite number of $n$, or there exists a pair $j_0$ and $i_0$ such that $c(n,j_0) = c'(n,i_0)$ for an infinite number of $n$.
Thus, we have a contradiction.
\end{proof}
\begin{example}\label{example:sumlim1-but-non-ergodic}
We construct a sequence of covers $G_0 \getsby{\fai_0} G_1 \getsby{\fai_1} G_2 \getsby{\fai_2} \dotsb$ that satisfies all of the following:
\itemb
\item $(X,f)$ is minimal,
\item  each $\sC_n~(n \bpi)$ is linearly dependent,
\item  $(X,f)$ has exactly two ergodic measures, and
\item  there exists a non-ergodic measure $\mu$ and an expression of $\mu$ by $\sC_n$ $(n \bni)$ such that \ref{item:sumlim1} of Theorem \ref{thm:an-ergodic-measure-is-from-circuits} is satisfied.
\itemn
Let $n \bpi$.
Let $v_{n,1},v_{n,2},v_{n,3}$ be distinct vertices.
Construct two paths $a_{n,1}$ and $b_{n,1}$ from $v_{n,1}$ to $v_{n,2}$,
and two paths $a_{n,2}$ and $b_{n,2}$ from $v_{n,2}$ to $v_{n,3}$.
We assume that the lengths of $a_{n,1}$, $b_{n,1}$, $a_{n,2}$, and $b_{n,2}$ are the same.
Construct a path $d_n$ from $v_{n,3}$ to $v_{n,1}$.
We assume that the paths above have no common vertices, except for the initial  and  terminal vertices.
Thus, $\sC_{n}$ consists of four circuits, namely $\seb a_n = a_{n,1} + a_{n,2} + d_{n}, b_n = b_{n,1} + b_{n,2} + d_{n}, c_n = a_{n,1} + b_{n,2} + d_{n}, c'_n = b_{n,1} + a_{n,2} + d_{n} \sen$.
Because the linear equation $a_n + b_n = c_n + c'_n$ exists, $\sC$ is linearly dependent.
For a cover $\fai_{n+1,n} : G_{n+1} \to G_n$, we assume that $\fai(v_{n+1,i}) = v_{n,1}$ $(i = 1,2,3)$ and all paths $a_{n+1,j},b_{n+1,j}$ $(j = 1,2)$ and $d_{n+1}$ are mapped to circuits $\seb a_n, b_n, c_n, c'_n \sen$ with the following multiplicity.
Let $p(n) \to +\infty$ sufficiently quickly as $n \to +\infty$.
The path $a_{n+1,1}$ is mapped to $a_n$ $p(n)$-times and to $b_n$ once.
The first edge of $a_{n+1,1}$ is mapped to the direction of $a_n$.
The first edge of the path $b_{n+1,1}$ has to be mapped to the same direction as $a_{n+1,1}$.
We assume that $b_{n+1,1}$ is mapped to $a_n$ once and to $b_n$  $p(n)$-times.
The path $a_{n+1,2}$ is mapped to $a_n$ $p(n)$-times and to $b_n$ once.
The path $b_{n+1,2}$ is mapped to $a_n$ once and to $b_n$ $p(n)$-times.
The path $d_{n+1}$ is mapped to $c_n$ once and to $c'_n$ once.
Then, because the condition \ref{thm:minimal-main:Cycle:Edge} in Theorem \ref{thm:minimal-main} is satisfied, the inverse limit $(X,f)$ is minimal.
All $\Delta_n$ $(n \bpi)$ are convex quadrilaterals with extremal points $\tila_n,\tilb_n,\tilc_n$, and $\tilc'_n$.
It follows that $\xi_n$ maps $\tila_{n+1}$ and $\tilb_{n+1}$ closer to $\tila_n$ and $\tilb_n$, respectively, as $n \to \infty$.
On the other hand, $\xi_n$ maps $\tilc_{n+1}$ and $\tilc'_{n+1}$ closer to $(\tila_n +\tilb_n)/2$ as $n \to \infty$.
Because we have assumed that $p(n) \to +\infty$ sufficiently quickly, $\tila_{\infty} = \lim_{n \to \infty}\tila_n$ and $\tilb_{\infty} = \lim_{n \to \infty}\tilb_n$ exist, and are distinct ergodic measures.
Moreover, we have that $\lim_{n \to \infty}\tilc_n = \lim_{n \to \infty}\tilc'_n = (\tila_{\infty}+\tilb_{\infty})/2$.
By Theorem \ref{thm:finite-ergodicity}, there exist exactly two ergodic measures.
Let $\mu = (\tila_{\infty} + \tilb_{\infty})/2$.
This is not ergodic, and has an expression $\mu_n = (1/2)\tila_n + (1/2)\tilb_n = (1/2)\tilc_n + (1/2)\tilc'_n$ for all $n \bpi$.
Let $\epsilon > 0$.
Then, we have seen that, for sufficiently large $n$, $c_n, c'_n \in \Ccal(\mu,m,n,\epsilon)$.
Thus, condition \ref{item:sumlim1} of Theorem \ref{thm:an-ergodic-measure-is-from-circuits} is satisfied for a non-ergodic measure $\mu$.
Note that the resulting $\sP_f$ is one-dimensional, and requires only two circuits $a_n$ and $b_n$ to express all measures for $n \bni$.
If $p(n) \to +\infty$ too slowly, the system becomes uniquely ergodic.
\end{example}
In \cite[Proposition 3.3 (b)]{GM}, Gambaudo and Martens showed that if $f$ has bounded combinatorics for a Cantor minimal system, then it is uniquely ergodic.
We want to extend this theorem in our case, and also loosen the condition to the somewhat unbounded case.
Let $\Gcal$ be a sequence $G_0 \getsby{\fai_0} G_1 \getsby{\fai_1} G_2 \getsby{\fai_2} \cdots$ of covers for a 0-dimensional system.
We use the notation of \usualnotationss given in \S \ref{sec:preliminaries}.
Let $n \bni$.
For a circuit $c \in \sC(G_{n+1})$, there exists a representation $(\fai_n)_*(c) = \sum_{c' \in \sC_n}s(c')c'$ with $s(c') \bni$ for all $c' \in \sC_n$.
Generally, these representations are not unique.
To make matters worse, some of the $s(c')$ may be equal to $0$, meaning that the argument in \cite[Proposition 3.3 (b)]{GM} fails in our case.
Nevertheless, we think it is not unnatural to restrict the systems of circuits to those in which all values of $s(c')$ are positive.
This is partly because, as shown in Example \ref{example:sumlim1-but-non-ergodic}, there exist cases in which not all circuits are needed to express all measures, and partly because, as shown in Example \ref{example:circuits-tree}, there exist cases in which all $\sC_n$ $(n \bni)$ are linearly independent and all values of $s(c')$ become positive for certain covers of minimal Cantor systems.
As in the case of \cite[Proposition 3.3 (b)]{GM}, we consider a ``winding matrix'' for all $\fai_n$ $(n \bni)$.
Let $\Ccal$ be a system of circuits enumerated by $\Nonne$ that expresses all invariant measures.
Suppose that there exist non-negative integers $m(c,c')~(c \in \Ccal_{n+1}, ~c' \in \Ccal_n,~n \bni)$ such that $(\fai_n)_*(c) = \sum_{c' \in \Ccal_n}m(c,c')c'$ for all $n \bni$.
We fix a numbering $\seb c(n,1),c(n,2),\dotsc,c(n,d_n) \sen = \Ccal_n$ for all $n \bni$.
For each $n \bni$, we define a matrix $M_n$ whose entries are $m_n(i,j) = m(c(n+1,i),c(n,j))$ for all $1 \le i \le d_{n+1}$ and $1 \le j \le d_n$.
Let $n \bni$.
Let $\alpha \in \Delta_{n+1}$ be expressed as $\alpha = \sum_{c \in \Ccal_{n+1}}s(c)\tilc$ with $\sum_{c \in \Ccal}s(c) = 1$.
Then, $(\fai_n)_*(\alpha) = \sum_{c \in \Ccal_{n+1}} \sum_{c' \in \Ccal_n}s(c)\cdot(\Per(c')/\Per(c)) \cdot m(c,c')) \cdot \tilc'$.
If we write $\alpha_i := s(c(n+1,i))$ for $1 \le i \le d_{n+1}$ and $l_{s}(i) := \Per(c(s,i))$ for $1 \le i \le d_s$ for all $s \bni$, 
then $(\fai_n)_*(\alpha) = \sum_{i=1}^{d_{n+1}}\sum_{j = 1}^{d_n}\alpha_i \cdot (l_{n}(j)/l_{n+1}(i)) \cdot m_n(i,j) \cdot \tilc(n,j)$.
Therefore, we consider a matrix $\barM_n$ whose $(i,j)$-th entry is $\barm_n(i,j) = (l_{n}(j)/l_{n+1}(i)) \cdot m_n(i,j)$.
The matrix $\barM_n$ acts on $\alpha$ from the right.
We say $\seb \barM_n \sen_{n \bni}$ is the {\it system of winding matrices} of $\Ccal$.
Note that $\sum_{1 \le j \le d_n}\barm_n(i,j) =1$ for all $1 \le i \le d_{n+1}$.
\begin{example}\label{example:arbitrary-rational-entries}
Let $\seb d_n \sen_{n \bni}$ be a sequence of positive integers.
Let $\Mcal : \seb \barM_n = (\barm_n(i,j)) \sen_{n \bni}$ be a sequence of $(d_{n+1},d_n)$-matrices with positive, rational entries such that $\sum_j \barm_n(i,j) = 1$.
As shown in Example \ref{example:circuits-tree}, for any sequence $m_n(i,j)$ $(n \bni, 1 \le i \le d_{n+1},  1 \le j \le d_n)$ of positive integers, there exists a sequence of covers $\Gcal : G_0 \getsby{\fai_0} G_1 \getsby{\fai_1} \dotsb$ that generates such a sequence.
Note that we are only concerned with the rate $m_n(i,1):m_n(i,2):\dotsb:m_n(i,j)$ to obtain the desired $\barM_n$.
Therefore, we can construct $\Gcal$ such that, for each $n \bni$, all circuits in $G_n$ have an equal period,
and we can adjust the values of $m_n(i,j)$ to generate the system of $\barm_n(i,j)$.
Thus, we can realize $\Mcal$ by a sequence of covers.
\end{example}

\begin{nota}
Let $M$ be a matrix whose entries are $m_{i,j}$, and
let $\epsilon$ be a real number.
We write $\epsilon < M$ if $\epsilon < m_{i,j}$ for all $i,j$.
We call $M$ a positive matrix if $0 < M$.
\end{nota}

\begin{lem}\label{lem:positive-matrix}
Let $s,t$ be positive integers and $1/t \ge \epsilon > 0$.
Let $M = (m_{i,j})_{1 \le i \le s,~1 \le j \le t}$ be a matrix that satisfies $\epsilon \le M$ and $\sum_{1 \le j \le t}m_{i,j} = 1$ for all $1 \le i \le n$.
Both $\Real^s$ and $\Real^t$ are endowed with the norm as $\abs{x} = \sum_i \abs{x_i}$, where $x = \pstro{x}{s}$ or $\pstro{x}{t}$.
Let $0 \ne x = \pstro{x}{s} \in \Real^s$ be orthogonal, in the usual sense, to $(1,1,\dotsc,1) \in \Real^s$, i.e., $\sum_{1 \le i \le s}x_i = 0$.
Let $y = xM$.
Then, it follows that $\abs{y} \le (1-t\epsilon)\abs{x}$.
\end{lem}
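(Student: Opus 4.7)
The plan is to decompose $x$ into its positive and negative parts and track how much ``cancellation'' is forced when we pass to $xM$ by the uniform positivity $\epsilon \le M$. Write $x = x^+ - x^-$ in the standard way (so $x^+_i = \max(x_i,0)$ and $x^-_i = \max(-x_i,0)$ have disjoint supports and are nonnegative), and set $A := |x|/2$. Because $\sum_i x_i = 0$, one has $\sum_i x^+_i = \sum_i x^-_i = A$. Define $u,v \in \Real^t$ by $u := x^+ M$ and $v := x^- M$, so that $y = u-v$ with $u,v \ge 0$ componentwise.

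Next I would exploit the row-sum condition and the lower bound $\epsilon \le M$ simultaneously. The row-sum condition $\sum_j m_{i,j} = 1$ gives $\sum_j u_j = \sum_i x^+_i = A$ and likewise $\sum_j v_j = A$. The positivity bound gives, for every $1 \le j \le t$,
\[
u_j = \sum_{i=1}^s x^+_i m_{i,j} \ge \epsilon \sum_i x^+_i = \epsilon A,
\qquad
v_j \ge \epsilon A.
\]

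Now I would convert these facts into the desired norm estimate using the identity $|u_j - v_j| = u_j + v_j - 2\min(u_j,v_j)$. Summing over $j$ gives
\[
|y| \;=\; \sum_{j=1}^t |u_j - v_j| \;=\; \sum_j u_j + \sum_j v_j - 2\sum_j \min(u_j,v_j) \;\le\; 2A - 2\cdot t \epsilon A \;=\; (1 - t\epsilon)\,|x|,
\]
which is the stated inequality. (The assumption $\epsilon \le 1/t$ merely guarantees that the bound is non-vacuous, i.e.\ $1-t\epsilon \ge 0$; it is also automatic since the row of $M$ has $t$ entries each $\ge \epsilon$ summing to $1$.)

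There is no serious obstacle here: the only subtle point is recognizing that the ``wasted mass'' per coordinate is exactly $2\min(u_j,v_j)$ and that the lower bound $\epsilon A$ on this minimum is uniform across all $t$ coordinates, which is precisely why the contraction factor $t\epsilon$ (and not merely $\epsilon$) appears.
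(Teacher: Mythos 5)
Your proof is correct, and it is essentially the same argument as the paper's: both quantify the forced cancellation $|x|-|y|$ by the cross-contributions of the positive and negative parts of $x$ and bound them below using $m_{i,j}\ge\epsilon$ together with $\sum_j m_{i,j}=1$; your $2\sum_j\min(u_j,v_j)$ is exactly the paper's sum $2\sum_{i\in I_1,j\in J_2}m_{i,j}|x_i|+2\sum_{i\in I_2,j\in J_1}m_{i,j}|x_i|$ in a cleaner packaging. No gaps.
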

\begin{proof}
Although the calculation is elementary, we list it here for completeness.
We set $I = [1,s]$ and $J = [1,t]$.
First, $\sum_{j \in J}y_j = \sum_{i,j}m_{i,j}x_i = \sum_{i} \left(\sum_j m_{i,j}\right)x_i = \sum_i x_i = 0$, and $y$ is also orthogonal to $(1,1,\dotsc,1)$.
Let $I_1 := \seb i \in I ~|~ x_i \ge 0 \sen$ and $I_2 := [1,s]\setminus I_1$
and $J_1 := \seb j \in J ~|~ y_j \ge 0 \sen$ and $J_2 := [1,t]\setminus J_1$.
Then, $\abs{x} = \sum_{i \in I_1}x_i - \sum_{i \in I_2} x_i$ and $\abs{y} = \sum_{j \in J_1} y_j - \sum_{j \in J_2} y_j$.
Because $\sum_{i \in I}x_i = 0$, we also have $\abs{x} = 2\sum_{i \in I_1} \abs{x_i} = 2\sum_{i \in I_2} \abs{x_i}$.
It follows that
\begin{equation*}
\begin{array}{lcl}
\abs{x} - \abs{y} & 
	= & \sum_{i \in I_1}x_i - \sum_{i \in I_2}x_i 
	- \left(\sum_{j \in J_1} y_j - \sum_{j \in J_2} y_j \right)\\
\ & 	= & \sum_{j \in J} \left(\sum_{i \in I_1} m_{i,j}x_i
	- \sum_{i \in I_2} m_{i,j}x_i \right)\\
	 & & - \sum_{j \in J_1} 
	\left(\sum_{i \in I_1} m_{i,j}x_i + \sum_{i \in I_2} m_{i,j}x_i \right)
	 + \sum_{j \in J_2}
	 \left(\sum_{i \in I_1} m_{i,j}x_i + \sum_{i \in I_2} m_{i,j}x_i \right) \\
\ & 	= &2\sum_{i \in I_1, j \in J_2} m_{i,j}x_i - 2\sum_{i \in I_2, j \in J_1} m_{i,j}x_i \\
 & 	= &  2 \sum_{i \in I_1, j \in J_2}m_{i,j} \abs{x_i} + 2 \sum_{i \in I_2, j \in J_1} m_{i,j} \abs{x_i}\\

 & 	\ge &  2 \sum_{i \in I_1, j \in J_2}\epsilon \abs{x_i} + 2 \sum_{i \in I_2, j \in J_1} \epsilon \abs{x_i}\\
 & 	= &  (\sharp J_2)\cdot \epsilon \cdot 2\sum_{i \in I_1} \abs{x_i} + (\sharp J_1) \cdot \epsilon \cdot 2\sum_{i \in I_2} \abs{x_i}\\
 & 	= &  (\sharp J_2) \cdot \epsilon \cdot \abs{x} + (\sharp J_1)\cdot \epsilon \cdot \abs{x}
\quad  = \quad t \epsilon \abs{x}
\end{array}
\end{equation*}
Therefore, we have that $(1-t\epsilon)\abs{x} \ge \abs{y}$, as desired.

\end{proof}

The next theorem extends \cite[Proposition 3.3 (b)]{GM}.
In \cite[Theorem 4.11]{BKMS}, the next theorem is referred in its complete form in the context of Bratteli diagrams of finite rank.

\begin{thm}\label{thm:limited-combinatorics-unique-ergodicity}
Let $\Ccal$ be a system of circuits enumerated by $\Nonne$ that expresses all invariant measures.
Suppose that $\Ccal$ has the system of winding matrices $\seb \barM_n \sen_{n \bni}$.
Let $\barM_n$ be a $(d_{n+1},d_n)$-matrix for each $n \bni$.
Suppose that there exists a series of positive numbers $0 < \epsilon_n $ $(n \bni)$ such that 
$\epsilon_n \le \barM_n$ for all $n \bni$.
Suppose that $\prod_{i = n}^{\infty}(1-d_i \epsilon_i) = 0$ for all $n \bni$.
Then, $(X,f)$ is uniquely ergodic.
\end{thm}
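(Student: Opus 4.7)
The plan is to take two arbitrary invariant probability measures $\mu,\nu \in \sP_f$ and show that $\mu_n = \nu_n$ in $\Delta_n$ for every $n \bni$; by Proposition \ref{prop:description-of-measures-by-covers} this will force $\mu = \nu$. The decisive tool is the one-step contraction of Lemma \ref{lem:positive-matrix}, iterated down the tower of winding matrices.

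Fix $n \bni$. For each $N > n$, using that $\Ccal$ expresses all invariant measures and the enumeration $\Ccal_N = \seb c(N,1),\dotsc,c(N,d_N) \sen$, write $\mu_N = \sum_i s^\mu_N(i)\, \tilc(N,i)$ and $\nu_N = \sum_i s^\nu_N(i)\, \tilc(N,i)$ with $s^\mu_N(i),s^\nu_N(i) \ge 0$; as noted in the discussion preceding the theorem, membership in $\Delta_N$ automatically gives $\sum_i s^\mu_N(i) = \sum_i s^\nu_N(i) = 1$. Regarding $s^\mu_N$ and $s^\nu_N$ as probability vectors in $\Real^{d_N}$, the defining property of the winding matrices makes $s^\mu_N\, \barM_{N-1} \barM_{N-2} \cdots \barM_n \in \Real^{d_n}$ a non-negative representation of $\mu_n = \xi_{N,n}(\mu_N)$ with respect to $\Ccal_n$, and similarly for $\nu$.

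Set $x_N := s^\mu_N - s^\nu_N$. Then $x_N$ is orthogonal to $(1,\dotsc,1) \in \Real^{d_N}$ and satisfies $\abs{x_N} \le 2$ in the $\ell^1$-norm. Since each $\barM_i$ has row sums equal to $1$ and entries bounded below by $\epsilon_i$, we automatically have $d_i \epsilon_i \le 1$, so Lemma \ref{lem:positive-matrix} applies to every factor; the proof of that lemma also shows that the output vector remains orthogonal to the all-ones vector, so the lemma may be iterated. This yields
\[
\abs{x_N\, \barM_{N-1} \cdots \barM_n} \;\le\; \abs{x_N}\prod_{i=n}^{N-1}(1 - d_i \epsilon_i) \;\le\; 2\prod_{i=n}^{N-1}(1 - d_i \epsilon_i).
\]
On the other hand, $\mu_n - \nu_n = \sum_j (x_N \barM_{N-1} \cdots \barM_n)_j\, \tilc(n,j)$ as an element of $\Ical(G_n)$, so any fixed norm on this finite-dimensional space bounds $\norm{\mu_n - \nu_n}$ by a constant multiple of $\abs{x_N \barM_{N-1} \cdots \barM_n}$. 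The hypothesis $\prod_{i=n}^{\infty}(1 - d_i \epsilon_i) = 0$ forces the right-hand side to $0$ as $N \to \infty$, while $\mu_n - \nu_n$ is independent of $N$; hence $\mu_n = \nu_n$. As $n$ was arbitrary, $\mu = \nu$.

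I expect the main subtlety to be purely bookkeeping, stemming from the possible linear dependence of $\Ccal_N$: the representations $s^\mu_N$ and $s^\nu_N$ are not canonical, and pushing different choices down the winding matrices produces different representations of $\mu_n$ and $\nu_n$. The argument bypasses this by making any choice at level $N$ and letting $N$ vary, using the linearity of $y \mapsto \sum_j y_j\, \tilc(n,j)$ to ensure that distinct vanishing sequences of representations all witness the vanishing of the single element $\mu_n - \nu_n$.
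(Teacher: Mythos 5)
Your proof is correct and follows essentially the same route as the paper: both arguments hinge on iterating the contraction estimate of Lemma \ref{lem:positive-matrix} along the products $\barM_{N-1}\cdots\barM_{n}$ to kill the component orthogonal to the all-ones vector. The only difference is presentational --- the paper bounds $\diam(\xi_{m,n}(\Delta_m))$ and then invokes Lemma \ref{lem:from-infinity-to-n}, whereas you compare two given invariant measures directly and conclude via Proposition \ref{prop:description-of-measures-by-covers}; your version is in fact slightly more careful on one point, since the winding matrices a priori control only $\Hull(\Ccal_m)$ rather than all of $\Delta_m$, and invariant measures are guaranteed to lie there by hypothesis.
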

\begin{proof}
It is enough to show that $\diam(\xi_{\infty,n}(\Delta_{\infty})) = 0$ for all $n \bni$.
Fix $n \bni$.
By Lemma \ref{lem:positive-matrix}, $\diam(\xi_{m,n}(\Delta_m)) \le (\diam(\Delta_m))\prod_{i = n}^{m-1}(1-d_i \epsilon_i)$.
Therefore, we have that $\diam(\xi_{m,n}(\Delta_m)) \to 0$ as $m \to \infty$.
By Lemma \ref{lem:from-infinity-to-n}, $\Delta_n \supseteq \xi_{n+1,n}(\Delta_{n+1}) \supseteq (\xi_{n+2,n})(\Delta_{n+2}) \supseteq \dotsb$, and $\bigcap_{m>n}\xi_{m,n}(\Delta_m) = \xi_{\infty,n}(\Deltainf)$.
Therefore, we obtain $\diam(\xi_{\infty,n}(\Delta_{\infty})) = 0$, as desired.
\end{proof}


\begin{thebibliography}{99}
\if0
\bibitem{Akin} E.~Akin,
{\it The General Topology of Dynamical Systems $($Graduate Studies in Mathematics, Vol. 1\/$)$},
\textrm{American Mathematical Society, Providence, Rhode Island},
(1993).
\fi


\bibitem{AGW} E.~Akin, E.~Glasner and B.~Weiss,
{\it Generically there is but one self homeomorphism of the Cantor set},
\textrm{Trans.\ Amer.\ Math.\ Soc.}
\textbf{360} (2008),\ 3613--3630.



\bibitem{BKMS} S.~Bezuglyi, J.~Kwiatkowski, K.~Medynets and B.~Solomyak,
{\it Finite rank Bratteli diagrams: structure of invariant measures},
\textrm{Trans.\ Amer.\ Math.\ Soc.}
\textbf{365} (2013),\ 2637-2679.


\if0
\bibitem{Boyle} M.~Boyle,
{\it Lower entropy factors of sofic systems},
\textrm{Ergod.\ Th.\ \&\ Dynam.\ Sys.}
\textbf{4} (1984),\ 541--557.
\fi

\if0
\bibitem{Conley} C.~Conley,
{\it Isolated Invariant Sets and the Morse Index},
\textrm{CBMS Reg. Conf. Ser. in Math., 
American Mathematical Society, Providence, Rhode Island},
(1978).
\fi


\bibitem{GM} J.-M.~Gambaudo and M.~Martens,
{\it Algebraic topology for minimal Cantor sets},
\textrm{Ann.\ Henri Poincare}
\textbf{7} (2006),\ 423--446.


\if0
\bibitem{GW1} E.~Glasner and B.~Weiss,
{\it Sensitive dependence on initial conditions},
\textrm{Nonlinearity}
\textbf{6} (1993),\ 1067--1075.
\fi


\if0
\bibitem{GW2} E.~Glasner and B.~Weiss,
{\it Sensitive dependence on initial conditions, Revised version},
\textrm{Unpublished}.
\fi


\bibitem{HPS} R.H.~Herman, I.~Putnam and C.~Skau,
{\it  Ordered Bratteli diagrams, dimension groups, and topological dynamics},
\textrm{Int.\ J.\ Math.}
\textbf{3} (1992),\ 827--864.



\if0
\bibitem{Hochman} M.~Hochman,
{\it Genericity in topological dynamics},
\textrm{Ergod.\ Th.\ \&\ Dynam.\ Sys.}
\textbf{28} (2008),\ 125--165.
\fi


\if0
\bibitem{KR} A.~S.~Kechris and C.~Rosendal
{\it Turbulence, amalgamation, and generic automorphisms of homogeneous structures},
\textrm{Proc.\ London Math.\ Soc.}
\textbf{94(3)} (2007),\ 302--350.
\fi

\if0
\bibitem{LinMatui} H.~Lin and H.~Matui,
{\it Minimal dynamical systems and approximate conjugacy},
\textrm{Math.\ Ann.}
\textbf{332}, (2005),\ 795--822.
\fi


\if0
\bibitem{LindMarcus} D.~Lind and B.~Marcus,
{\it An introduction to symbolic dynamics and coding},
\textrm{Cambridge University Press,}
\textrm{Cambridge,}
(1995).
\fi

\if0
\bibitem{Matui} H.~Matui,
{\it Approximate conjugacy and full groups of Cantor minimal systems},
\textrm{Publ.\ Res.\ Inst.\ Math.\ Sci.}
\textbf{41} (2005),\ 695--722.
\fi


\bibitem{Medynets} K.~Medynets,
{\it Cantor aperiodic systems and Bratteli diagrams},
\textrm{C.\ R.\ Acad.\ Sci.\ Paris, Ser.\ I}
\textbf{342} (2006),\ 43--46.


\if0
\bibitem{Oxtoby} J.~C.~Oxtoby,
{\it Measure and Category, 2nd ed., $($Graduate Texts in Mathematics, 2\/$)$},
\textrm{Springer, New York,}
(1980).
\fi


\if0 
\bibitem{Shimomura0} T.~Shimomura, 
{\it The pseudo-orbit tracing property and expansiveness on the Cantor set},
\textrm{Proc.\ Amer.\ Math.\ Soc.}
\textbf{106} (1989),\ 241--244. 
\fi


\if0
\bibitem{Shimomura1} T.~Shimomura,
{\it Chain mixing endomorphisms are approximated by subshifts on the Cantor set},
\textrm{Tsukuba\ J.\ Math.}
\textbf{35} (2011),\ 67--77.
\fi

\if0
\bibitem{Shimomura2} T.~Shimomura,
{\it Aperiodic homeomorphisms approximate chain mixing endomorphisms on the Cantor set},
\textrm{Tsukuba\ J.\ Math.}
\textbf{36} (2013),\ 173--183.
\fi


\if0
\bibitem{Shimomura3} T.~Shimomura,
{\it A topological dynamical system on the Cantor set approximates its factors and its natural extension},
\textrm{Topology and its Applications}
\textbf{159} (2012),\ 3137--3142.
\fi

\bibitem{Shimomura4} T.~Shimomura,
{\it Special homeomorphisms and approximation for Cantor systems},
\textrm{Topology and its Applications}
\textbf{161} (2014),\ 178--195.


\bibitem{Walters} P.~Walters,
{\it An Introduction to Ergodic Theory (Graduate Text in Mathematics, 79)},
\textrm{Springer, New York,}
(1982).



\end{thebibliography}
\end{document}